\documentclass[12pt]{article}
\usepackage[T1]{fontenc}
\usepackage[utf8]{inputenc}
\usepackage{lmodern}
\usepackage{amsmath,amssymb,amsthm}
\usepackage[margin=1in]{geometry}
\usepackage[expansion=false]{microtype}
\usepackage{enumitem}
\usepackage{aliascnt}
\usepackage{hyperref}
\usepackage[nameinlink,capitalize,noabbrev]{cleveref}
\hypersetup{
	colorlinks=true,
	linkcolor=blue,
	citecolor=blue,
	urlcolor=blue,
	pdftitle={Recurrence, Transience, and the Rate of Escape for Elephant Random Walks with Two Memory Channels},
	pdfauthor={Ngo Phuoc Nguyen Ngoc}
}
\newtheorem{theorem}{Theorem}[section]
\newaliascnt{proposition}{theorem}
\newtheorem{proposition}[proposition]{Proposition}
\aliascntresetthe{proposition}
\newaliascnt{lemma}{theorem}
\newtheorem{lemma}[lemma]{Lemma}
\aliascntresetthe{lemma}
\newaliascnt{corollary}{theorem}
\newtheorem{corollary}[corollary]{Corollary}
\aliascntresetthe{corollary}
\theoremstyle{remark}
\newaliascnt{remark}{theorem}
\newtheorem{remark}[remark]{Remark}
\aliascntresetthe{remark}
\theoremstyle{plain}
\crefname{theorem}{Theorem}{Theorems}
\crefname{proposition}{Proposition}{Propositions}
\crefname{lemma}{Lemma}{Lemmas}
\crefname{corollary}{Corollary}{Corollaries}
\crefname{remark}{Remark}{Remarks}
\newcommand{\Pp}{\mathbb{P}}
\newcommand{\E}{\mathbb{E}}
\newcommand{\R}{\mathbb{R}}
\newcommand{\F}{\mathcal{F}}
\newcommand{\G}{\mathcal{G}}
\newcommand{\Dact}{\mathfrak{D}}
\newcommand{\qeff}{q_{\mathrm{eff}}}
\newcommand{\1}{\mathbf{1}}
\newcommand{\sgn}{\operatorname{sgn}}
\newcommand{\as}{\mathrm{a.s.}}
\newcommand{\dd}{\,\mathrm{d}}
\title{Recurrence, Transience, and the Rate of Escape for Elephant Random Walks with Two Memory Channels}
\author{
Ngo Phuoc Nguyen Ngoc\\
\small Institute of Research and Development, Duy Tan University,
Da Nang, Vietnam\\
\small Faculty of Natural Sciences, Duy Tan University,
Da Nang, Vietnam\\
\small Email:
\href{mailto:ngopnguyenngoc@duytan.edu.vn}
{ngopnguyenngoc@duytan.edu.vn}
}
\date{}
\begin{document}
\maketitle
\begin{abstract}
We study the one-dimensional elephant random walk with two memory channels introduced by Saha [Phys.\ Rev.\ E \textbf{106}, L062105 (2022)] with memory parameter $p\in(0,1)$. Maulik, Roy and Sadhukhan [arXiv:2509.10225] proved recurrence for $p\le11/16$ and transience for $p>7/8$, leaving the range $11/16<p\le7/8$ open. We close this gap by proving that $p=11/16$ is the exact recurrence--transience threshold and by determining how fast the walk escapes from the origin throughout the transient regime. For $11/16<p<7/8$, we also show that the scaling limit obtained by Maulik, Roy and Sadhukhan is almost surely nonzero. At $p=7/8$, we prove that the walk is transient with zero asymptotic velocity and escapes at the scale $n/\sqrt{\log n}$.
\end{abstract}

\medskip

\noindent\textbf{Keywords.} Elephant random walk; memory channels; recurrence; transience; coupling; stochastic approximation.

\medskip

\noindent\textbf{2020 Mathematics Subject Classification.} Primary 60K50; secondary 60F15, 60J10, 82B26.

\tableofcontents

\section{Introduction}
The elephant random walk (ERW) is a well-known example of a non-Markovian random walk with complete memory. In the classical one-dimensional model, let $W_1$ be a random sign, equal to $1$ or $-1$, each with probability $1/2$, and for $n\ge1$ choose, independently of the past and independently for different values of $n$,
\[
	U_{n+1}\sim\operatorname{Unif}\{1,\ldots,n\}, \qquad \sigma_{n+1}\in\{-1,1\}, \qquad \Pp(\sigma_{n+1}=1)=q,
\]
where $q\in(0,1)$ is the memory parameter. The walker then repeats the selected past increment or reverses it,
\[
	W_{n+1}:=\sigma_{n+1}W_{U_{n+1}},
\]
and its position is $E_n:=\sum_{j=1}^{n}W_j$. Since $E_n\equiv n\pmod2$ and a history of $n$ increments ending at $E_n=e$ contains $(n+e)/2$ increments equal to $+1$ and $(n-e)/2$ equal to $-1$,
\begin{equation}\label{eq:ERW-transition}
	\Pp(E_{n+1}=e+1\mid W_1,\ldots,W_n) =\frac12+\frac{(2q-1)e}{2n}.
\end{equation}
In particular $(n,E_n)$ is a Markov chain, the law after time $n$ depending on the past only through $n$ and $E_n$.

Sch\"utz and Trimper~\cite{SchutzTrimper2004} introduced the model, computed its first two moments, and identified the phase transition at $q=3/4$: the walk is diffusive for $q<3/4$, critical at $q=3/4$, and superdiffusive for
$q>3/4$. Further exact computations, and variants with modified memory rules, were carried out by, for example, Paraan and
Esguerra~\cite{ParaanEsguerra2006} and Cressoni, da~Silva and Viswanathan~\cite{CressoniSilvaViswanathan2007}.

A rigorous limit theory was then developed using urn and martingale methods. Baur and Bertoin~\cite{BaurBertoin2016} connected the ERW to a two-color P\'olya-type urn and applied the general urn theory of Janson~\cite{Janson2004}, while K\"ursten~\cite{Kursten2016} established a coupling between the ERW and bond percolation on random recursive trees. Martingale approaches were developed by Bercu~\cite{Bercu2018} and by Coletti, Gava and Sch\"utz~\cite{ColettiGavaSchutz2017a,ColettiGavaSchutz2017b}. These approaches yielded central limit theorems and laws of the iterated logarithm in the diffusive and critical regimes. In the superdiffusive regime, the suitably rescaled position converges almost surely to a non-Gaussian random variable. Gaussian fluctuation results for this regime were later obtained by Kubota and  Takei~\cite{KubotaTakei2019}. Gu\'erin, Laulin and Raschel~\cite{GuerinLaulinRaschel} proved that the limiting random variable in the superdiffusive regime has a density with full support on $\R$. Gu\'erin, Laulin, Raschel and Simon~\cite{GuerinLaulinRaschelSimon} went on to study its moments, tail behavior, and regularity properties in greater detail. Further results include Cram\'er moderate deviations~\cite{FanHuMa2021} and rates of convergence in the central limit theorem for ERWs with random step sizes~\cite{DedeckerFanHuMerlevede2023}. Multidimensional versions were studied by Bercu and Laulin~\cite{BercuLaulin2019} and Bertenghi~\cite{Bertenghi2022}. The ERW is also closely related to step-reinforced random walks~\cite{Bertoin2021,Businger2018}.

The recurrence and transience of the ERW have also been studied extensively. Bertoin~\cite{Bertoin2022} studied returns to the origin and first-return times in the diffusive regime. Coletti and Papageorgiou~\cite{ColettiPapageorgiou2021} investigated recurrence and transience in one dimension, and Qin~\cite{Qin2025} later completed the recurrence--transience classification in all dimensions. For $d=1,2$, Qin identified the threshold
\[
	q_d=\frac{2d+1}{4d},
\]
whereas for $d\ge3$ the walk is transient for every value of the memory parameter. Curien and  Laulin~\cite{CurienLaulin2024} subsequently gave a short proof of recurrence for the planar ERW in the diffusive regime. Our comparison argument adapts the monotone coupling used by Qin~\cite{Qin2025} to compare one-dimensional ERWs with different memory parameters.

Several variants of the ERW allow zero increments or modify the memory rule. Kumar, Harbola and Lindenberg~\cite{KumarHarbolaLindenberg2010} studied a model exhibiting subdiffusive, diffusive and superdiffusive regimes. The ERW with stops was analyzed by Bercu~\cite{Bercu2022stops} and later in higher
dimensions~\cite{Bercu2025stops}; see also Miyazaki and Takei~\cite{MiyazakiTakei2020}. Models with restricted or gradually fading memory were studied by Gut and Stadtm\"uller~\cite{GutStadtmuller2021,GutStadtmuller2022}, Laulin~\cite{Laulin2022}, and Chen and Laulin~\cite{ChenLaulin2023}. In ERWs with stops, zero is included directly among the possible increments. The two-channel model considered here produces zero increments through a different mechanism.

\subsection{The model and previous results}

Saha~\cite{Saha2022} introduced random walks with several memory channels, in which more than one selected past increment contributes to the next move. We consider the two-channel model. Let $S_0=0$ and let $X_1,X_2$ be independent random signs, each equal to $1$ or $-1$ with probability $1/2$. For $n\ge2$, choose, independently of the past and independently for different values of $n$,
\[
	U_{1,n+1},U_{2,n+1}\stackrel{\mathrm{iid}}{\sim} \operatorname{Unif}\{1,\ldots,n\}, \qquad \sigma_{1,n+1},\sigma_{2,n+1}\in\{-1,1\},
\]
where $\sigma_{1,n+1}$ and $\sigma_{2,n+1}$ are independent of each other and of the selected indices, and satisfy
\[
	\Pp(\sigma_{i,n+1}=1)=p, \qquad \Pp(\sigma_{i,n+1}=-1)=1-p.
\]
The two memory channels return the values
\[
	Y_{i,n+1}:=\sigma_{i,n+1}X_{U_{i,n+1}}, \qquad i=1,2,
\]
and the next increment is
\begin{equation}\label{eq:model}
	X_{n+1}:=\sgn(Y_{1,n+1}+Y_{2,n+1}), \qquad \sgn(0):=0.
\end{equation}
The position is $S_n:=\sum_{j=1}^nX_j$. Since the two selected values may cancel, the walk can have zero increments. We therefore also introduce
\[
	A_n:=\sum_{j=1}^n\1_{\{X_j\ne0\}},
\]
which counts the number of nonzero steps up to time $n$; note that $A_2=2$, since $X_1,X_2\in\{-1,1\}$. The triple $(n,S_n,A_n)$ contains all the information that is needed for the one-step transition probabilities; see \cref{rem:markov}.  Throughout the paper,
\[
	\F_n:=\sigma(X_1,\ldots,X_n), \qquad \mu:=2p-1=\E[\sigma_{i,n+1}].
\]
Thus $\mu$ is the mean of the random sign used in each memory channel.

Using stochastic approximation, Maulik, Roy and Sadhukhan~\cite{MaulikRoySadhukhan2025} studied the asymptotic behavior of this model. Here and throughout the paper, recurrence means that the walk visits the origin infinitely often almost surely, while transience means that it visits the origin only finitely often almost surely. They proved recurrence for $0<p\le11/16$ and transience for $7/8<p<1$, leaving the range
\[
	\frac{11}{16}<p\le\frac78
\]
unresolved. They also established that $S_n/n\to0$ almost surely for $p<7/8$ and that $S_n/n$ converges almost surely to a nonzero limit for $p>7/8$. Their law of large numbers did not cover $p=7/8$, which was left open in \cite[Remark~2.3]{MaulikRoySadhukhan2025}. In addition, they identified a further transition in the fluctuation behavior at $p=(113+\sqrt{97})/128$.

The present paper completes the recurrence--transience classification and sharpens the asymptotic picture in the transient regime. We prove transience throughout $11/16<p\le7/8$ and give a self-contained proof of recurrence for $p<11/16$, showing that $p=11/16$ is the exact recurrence--transience threshold. We also determine the almost-sure growth exponent of $|S_n|$ throughout the transient regime. In the range $11/16<p<7/8$, we prove that the previously known scaling limit is almost surely nonzero. At the critical value $p=7/8$, we establish the missing law of large numbers and obtain the sharper asymptotics $|S_n|\asymp n/\sqrt{\log n}$, together with the asymptotic behavior of the proportion of nonzero steps up to order $1/\log n$. The complete recurrence--transience classification is stated in the following theorem.

\begin{theorem}\label{thm:main}
	Consider the ERW with two memory channels and memory parameter $p\in(0,1)$. The walk is recurrent if and only if
	\[
		p\le\frac{11}{16}.
	\]
	Equivalently, it is transient if and only if $p>11/16$.
\end{theorem}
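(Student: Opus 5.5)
The plan is to reduce the two-channel walk to an effective one-dimensional ERW whose memory parameter is determined by the asymptotic proportion of nonzero steps, and then to argue as for the superdiffusive ERW.

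\textbf{Step 1: the one-step drift and the proportion of nonzero steps.} Conditionally on $\F_n$, set $u=A_n/n$ and $v=\mu S_n/n$. Each channel returns $+1$ with probability $\alpha=(u+v)/2$, returns $-1$ with probability $\beta=(u-v)/2$, and returns $0$ with probability $1-u$. A direct computation gives
\[
\E[X_{n+1}\mid\F_n]=\mu\Bigl(2-\frac{A_n}{n}\Bigr)\frac{S_n}{n},\qquad \Pp(X_{n+1}\neq0\mid\F_n)=\frac{u^2+v^2}{2}+2u(1-u).
\]
Since $S_n/n\to0$ for $p<7/8$ by \cite{MaulikRoySadhukhan2025}, the recursion for $A_n/n$ is a stochastic approximation scheme. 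Its limiting ODE has the stable zero $u_*=2/3$, so $A_n/n\to2/3$ almost surely. I would also record a quantitative rate, say $|A_n/n-2/3|=O(n^{-1/2+\varepsilon})+O(|S_n/n|^2)$. Consequently the walk behaves like an ERW with drift coefficient $\gamma:=\tfrac43\mu$, that is, with $2\qeff-1=\tfrac43\mu$. The ERW criticality condition $2\qeff-1=\tfrac12$ then reads $\mu=3/8$, i.e.\ $p=11/16$. Likewise $\gamma=1$ corresponds to $p=7/8$.

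\textbf{Step 2: recurrence for $p\le 11/16$ and the martingale for $p>11/16$.} Recurrence for $p\le11/16$ is available from \cite{MaulikRoySadhukhan2025}. For a self-contained argument when $p<11/16$, I would adapt Qin's monotone coupling: after a random time at which $A_k/k$ is close to $2/3$, the two-channel walk is dominated by a diffusive ERW with $\qeff<3/4$, which returns infinitely often.

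For transience, let $\gamma_k:=\mu(2-A_k/k)$ and $c_n:=\prod_{k<n}(1+\gamma_k/k)$, which is $\F_{n-1}$-measurable. Then $M_n:=S_n/c_n$ is a martingale. By Step 1, $c_n=n^{\gamma+o(1)}$, and the rate for $A_n/n$ makes $\sum_k|\gamma_k-\gamma|/k$ converge, so in fact $c_n\asymp n^{\gamma}L_n$ with a convergent random factor $L_n$. The increments satisfy $\E[(M_{n+1}-M_n)^2\mid\F_n]\le c_{n+1}^{-2}$. This is summable exactly when $2\gamma>1$, i.e.\ $p>11/16$, so $M_n\to M$ almost surely and in $L^2$. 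Once $M\neq0$ almost surely, we get $|S_n|\sim|M|c_n\to\infty$, which is transience.

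\textbf{Step 3: $M\neq0$ almost surely. This is the main obstacle.} By Lévy's $0$--$1$ law, $\Pp(M\neq0\mid\F_n)\to\1_{\{M\ne0\}}$. It therefore suffices to show that $\Pp(M\neq0\mid\F_n)\ge\delta>0$ on an event that occurs infinitely often almost surely. I would proceed in three parts.
\begin{itemize}[leftmargin=*]
\item The conditional variance of $M-M_n$ given $\F_n$ is of order $n\,c_n^{-2}$. This comes from $\sum_{k\ge n}c_k^{-2}\asymp n^{1-2\gamma}$, together with a matching lower bound, which holds because the nonzero-step probability stays near $2/3$.
\item A conditional fourth-moment bound of the same order, combined with Paley--Zygmund, gives anti-concentration: $\Pp\bigl(|M-M_n|\ge\eta\sqrt n/c_n\mid\F_n\bigr)\ge\delta$.
\item This shows that $M=0$ cannot be forced from states with $|S_n|\lesssim\sqrt n$. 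States with $|S_n|\gg\sqrt n$ already satisfy $|M_n|\gg\sqrt n/c_n$, and Chebyshev then gives $M\ne0$ with high conditional probability.
\end{itemize}
As an alternative route, I would try the monotone coupling with a superdiffusive ERW, whose limit has a density by \cite{GuerinLaulinRaschel}, so that it has no atom at $0$.

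\textbf{Step 4: the endpoint $p=7/8$.} Here $\gamma=1$, $S_n/n\to0$ is not yet known, and the $v^2$ term couples $A_n/n$ to $S_n/n$. I would analyse the pair $(A_n/n,S_n/n)$ jointly as a two-dimensional stochastic approximation. The point is that the correction $-\mu(A_n/n-2/3)$ in $\gamma_n$ is of order $-(S_n/n)^2$, which yields $c_n\asymp n/\sqrt{\log n}$. Steps 2 and 3 then go through verbatim, since $\sum_k c_k^{-2}<\infty$.
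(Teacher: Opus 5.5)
Your martingale normalization is a different route from the paper's, and its first half is right. With $\gamma_k=\mu(2-A_k/k)$ and the predictable product $c_n=\prod_{k<n}(1+\gamma_k/k)$, the process $M_n=S_n/c_n$ is a martingale with $\E[(M_{n+1}-M_n)^2\mid\F_n]\le c_{n+1}^{-2}$, and $\sum_k c_k^{-2}<\infty$ exactly when $\frac43\mu>\frac12$, i.e.\ $p>11/16$. In this route, however, all of transience rests on $\Pp(M=0)=0$, and Step~3 does not prove it. Paley--Zygmund applied to $|M-M_n|$ only says that $M-M_n$ is not concentrated near $0$. It gives no control of an atom of $M-M_n$ at the particular point $-M_n$. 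It therefore yields $\Pp(M\ne0\mid\F_n)\ge\delta$ only on $\{|S_n|<\eta\sqrt n\}$, with the small $\eta$ that Paley--Zygmund produces. Chebyshev covers only $\{|S_n|>K\sqrt n\}$ for large $K$. Nothing handles the band $\eta\sqrt n\le|S_n|\le K\sqrt n$, and you do not show that on $\{M=0\}$ the ratio $|S_n|/\sqrt n$ leaves this band infinitely often, which the L\'evy $0$--$1$ argument needs. Also, ``the conditional variance of $M-M_n$ is of order $nc_n^{-2}$'' involves the future normalizers $c_k$, $k>n$, which are not $\F_n$-measurable. For $p\le3/4$ the a priori bound $\gamma_k\ge\mu$ with $\mu\le\frac12$ does not make $\sum_{k\ge n}(c_n/c_k)^2$ finite, so you need a localization (stopping when $A_k/k$ leaves a neighbourhood of $2/3$). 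This is probably repairable. For instance, apply Paley--Zygmund to $M_{Ln}$ itself rather than to $M-M_n$, to reach $|S_{Ln}|\ge K\sqrt{Ln}$ with conditional probability at least $\delta$ from every state, and only then use Chebyshev. As written, though, the decisive step is missing. The paper never needs such a nondegeneracy statement. It deletes the zero steps and shows that the embedded walk $\widehat S_k$ moves outward with probability $\frac12+B_k|\widehat S_k|/k$, where $B_k\to2\mu/3$ after averaging $C_p$ over the waiting time. It then uses Qin's monotone coupling to bound $|\widehat S_k|$ from below by a classical ERW with parameter $q\in(\frac34,\qeff(p))$, for which $|E_k|\to\infty$ is already known. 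Your ``alternative route'' is essentially this, but it must be run on the embedded walk, since the radial coupling requires $\pm1$ steps with matching parity.

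The second gap is $p=7/8$, which you must cover because \cite{MaulikRoySadhukhan2025} only gives transience for $p>7/8$; Step~4 is only a statement of intent. Step~3 uses that the nonzero-step probability stays near $2/3$, i.e.\ the law of large numbers at $p=7/8$, which is exactly what was open. The paper proves it with the Lyapunov function $V=x^2+\frac83(r-\frac23)^2$, for which $\nabla V\cdot H_p=2\kappa x^2(1+\mu y)-8ry^2$ with $\kappa=0$ at $p=7/8$, so that $x_n^2$ converges. It then uses the $r$-recursion to force that limit to be $0$, and the $\log A_n$ martingale to exclude $A_n/n\to0$, an unstable zero at which the noise degenerates. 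Your claim $c_n\asymp n/\sqrt{\log n}$ is also circular. It presupposes $A_n/n-\frac23\approx\frac{9}{32}(S_n/n)^2$ and $(S_n/n)^2\log n\to\frac{64}{27}$, i.e.\ the exact size of $S_n$, which the paper derives only after transience, from the lower bound $|S_n|\ge Cn^{1-\varepsilon}$ supplied by the coupling. The claim is also unnecessary: $\gamma_k\ge\mu=\frac34$ gives $c_n\gtrsim n^{3/4}$ and $\sum_k c_k^{-2}<\infty$ deterministically. So transience at $p=7/8$ again reduces to $M\ne0$, that is, to the missing Step~3, now without a law of large numbers to bound the conditional variance of the increments from below.
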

The next theorem determines the growth exponent of $|S_n|$ throughout the transient regime. In the range $11/16<p<7/8$, it also shows that the corresponding scaling limit is nonzero almost surely.
\begin{theorem}\label{thm:escape-scale}
		Consider the ERW with two memory channels and memory parameter $p\in(0,1)$.
	\begin{enumerate}[label=\textup{(\roman*)}]
		\item If $11/16<p<1$, then $|S_n|\to\infty$ almost surely and
		      \begin{equation}\label{eq:escape-exponent-main}
			      \lim_{n\to\infty}\frac{\log |S_n|}{\log n} =\min\left\{\frac{8p-4}{3},1\right\} \quad\as.
		      \end{equation}
		\item If $11/16<p<7/8$, then there is a finite random variable $L_p$ such that
		      \begin{equation}\label{eq:nondegenerate-main}
			      \frac{S_n}{n^{(8p-4)/3}}\longrightarrow L_p \quad\as, \qquad \Pp(L_p=0)=0.
		      \end{equation}
	\end{enumerate}
\end{theorem}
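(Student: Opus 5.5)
The plan rests on the one-step conditional law of $(S_n,A_n)$, which I would compute first. Write $\alpha_n:=A_n/n$ and $s_n:=S_n/n$. A single channel returns $+1$, $-1$ or $0$ with probabilities $r_\pm$ satisfying
\[
r_++r_-=\alpha_n,\qquad r_+-r_-=\mu s_n,\qquad 1-\alpha_n \text{ for } 0 .
\]
From \eqref{eq:model} this gives the two identities
\begin{equation*}
\E[X_{n+1}\mid\F_n]=\mu\,(2-\alpha_n)\,\frac{S_n}{n},\qquad
\Pp(X_{n+1}\neq0\mid\F_n)=2\alpha_n-\tfrac32\alpha_n^2+\tfrac{\mu^2}{2}s_n^2 .
\end{equation*}
Hence $\alpha_n$ is a stochastic-approximation sequence driven by the field $h(\alpha,s)=\alpha-\tfrac32\alpha^2+\tfrac{\mu^2}2 s^2$. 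Its stable zero is
\[
\alpha^*(s)=\tfrac13\bigl(1+\sqrt{1+3\mu^2s^2}\bigr)\ge\tfrac23 ,
\]
with equality only at $s=0$. At $s=0$ the drift coefficient of $S$ is therefore $\mu(2-\tfrac23)=\tfrac{4\mu}{3}=\tfrac{8p-4}{3}=:\gamma$. Note that $\gamma>\tfrac12$ exactly when $p>11/16$.

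Next I would introduce the normalized martingale
\[
\Pi_n:=\prod_{k=2}^{n-1}\Bigl(1+\frac{\mu(2-\alpha_k)}{k}\Bigr),\qquad M_n:=\frac{S_n}{\Pi_n}.
\]
Because $\Pi_{n+1}$ is $\F_n$-measurable, $(M_n)$ is an $\F_n$-martingale. Its increments are bounded by $2/\Pi_{n+1}$, and $\Pi_n\ge c\,n^{\mu(2-\max_k\alpha_k)-\varepsilon}$. For $p>11/16$ I need $\Pi_n\ge n^{1/2+\delta}$ eventually. To get this I would show $\alpha_n\to\alpha^*(\lim s_n)$ almost surely by the stochastic-approximation lemma, using $s_n\to0$ for $p<7/8$ from \cite{MaulikRoySadhukhan2025}. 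Then $\sum_n\Pi_n^{-2}<\infty$ almost surely, so $M_n\to M_\infty$ almost surely, by the conditional-variance criterion for martingales with bounded increments.

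For $11/16<p<7/8$ I would also show $\sum_k|\alpha_k-\tfrac23|/k<\infty$. This follows from a rate $|\alpha_k-\tfrac23|=O(k^{-1/2+\varepsilon}+s_k^2)$, obtained by linearizing $h$ at $2/3$ where the derivative is $-1$, together with $s_k^2=O(k^{2\gamma-2+\varepsilon})$ from a crude bound on $M$. Then $\Pi_n\sim C\,n^\gamma$ for a random $C\in(0,\infty)$, which gives part (ii) with $L_p=CM_\infty$, provided $M_\infty\neq0$.

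The main obstacle is $\Pp(M_\infty=0)=0$. I would handle it by conditional anti-concentration combined with Lévy's 0--1 law, so that $\Pp(M_\infty=0\mid\F_m)\to\1_{\{M_\infty=0\}}$. Given $\F_m$, the tail $M_\infty-M_m=\sum_{k\ge m}(X_{k+1}-\E[X_{k+1}\mid\F_k])/\Pi_{k+1}$ has conditional variance of order $\sum_{k\ge m}\Pi_{k+1}^{-2}\asymp m\,\Pi_m^{-2}$. The lower bound holds because $\operatorname{Var}(X_{k+1}\mid\F_k)\to\tfrac23>0$. I would apply a martingale Berry--Esseen or conditional CLT, in the style of Hall--Heyde, to the rescaled tail $(M_\infty-M_m)\Pi_m/\sqrt m$; its increments are uniformly small and its bracket converges. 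This gives, for any fixed $\eta>0$,
\[
\Pp\bigl(|M_\infty|\le\eta\sqrt m/\Pi_m\,\big|\,\F_m\bigr)\le c\,\eta+o(1)\quad\text{a.s.}
\]
The event $\{M_\infty=0\}$ is contained in the left-hand event. Letting $m\to\infty$ yields $\1_{\{M_\infty=0\}}\le c\eta$ almost surely, and since $\eta$ is arbitrary, $\Pp(M_\infty=0)=0$. The delicate part is making the CLT error uniform along the random conditional variances; I would first try the Lindeberg-type bound for martingale differences with deterministic normalization after stopping at times where $\alpha_k$ leaves a neighborhood of its limit.

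Part (i) then follows. For $11/16<p<7/8$, part (ii) gives $\log|S_n|/\log n\to\gamma$ and $|S_n|\to\infty$. For $p>7/8$, $S_n/n$ has a nonzero limit by \cite{MaulikRoySadhukhan2025}, so the exponent is $1$. For $p=7/8$, so $\mu=3/4$, the same martingale $M_n$ applies because $\Pi_n\ge n^{1/2+\delta}$ still holds. The anti-concentration argument again gives $M_\infty\neq0$ almost surely, and $|S_n|\le n$ gives the upper exponent $1$. For the lower exponent I would use $\mu(2-\alpha_k)-1=-\tfrac34(\alpha_k-\tfrac23)$, and $\alpha_k-\tfrac23\to0$ because $s_k\to0$ (the law of large numbers at $7/8$, also proved via this ODE). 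This gives $\log\Pi_n=\log n-o(\log n)$, so $|S_n|=|M_n|\,\Pi_n\ge n^{1-\varepsilon}$ eventually, and the exponent is $1=\min\{\gamma,1\}$.
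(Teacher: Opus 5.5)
Your proposal is viable and takes a genuinely different route from the paper. You normalize $S_n$ by the predictable product $\Pi_n$. You then have to prove $M_\infty\neq0$ by conditional anti-concentration: a martingale CLT for the tail on the scale $\sqrt m/\Pi_m$, combined with L\'evy's 0--1 law.

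The paper never needs a CLT. It deletes the zero steps and shows that the embedded walk $\widehat S_k$ moves outward with probability $\tfrac12+(\tfrac{2\mu}{3}+o(1))|\widehat S_k|/k$. It then uses Qin's monotone coupling to bound $|\widehat S_k|$ from below by a superdiffusive classical ERW with parameter $q\in(3/4,\qeff(p))$, whose scaling limit is known to be nonzero. This gives $|S_n|\ge C_\beta n^\beta$ with $\beta>1/2$ (\cref{prop:lower-escape}), hence $\sum_n|S_n|^{-2}<\infty$. The expansion $\log|S_{n+1}|-\log|S_n|=\frac{\alpha}{n}-\frac{\mu y_n}{n}+D_{n+1}+\rho_{n+1}$ then gives $\log|S_n|-\alpha\log n\to Z$ finite. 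Its drift is exactly the logarithmic increment of your $\Pi_n$, and positivity of the limit is automatic. At $p=7/8$ the same lower bound together with $|S_n|\le n$ gives exponent $1$.

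What each route buys:
\begin{itemize}
\item The paper imports the hard nondegeneracy from the classical ERW, and gets transience and the $\qeff$ heuristic along the way.
\item Your argument is intrinsic and treats $11/16<p\le7/8$ uniformly; at $p=7/8$ it even exhibits $S_n=M_n\Pi_n$ with $M_\infty\ne0$. The cost is a uniform conditional CLT.
\item The paper obtains convergence of $\sum_n y_n/n$ by telescoping the $y$-recursion and using $\sum_n x_n^2/n<\infty$ and $\sum_n y_n^2/n<\infty$ from the Lyapunov function. This is lighter than your rate bound for $\sum_k|\alpha_k-\frac23|/k$.
\end{itemize}

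To make the CLT step rigorous, you can proceed as follows.
\begin{itemize}
\item By \cref{rem:markov}, the conditional law given $\F_m$ is that of the chain started at $(m,S_m,A_m)$.
\item By \cref{thm:lln} and L\'evy's upward theorem applied to the increasing events $\{\sup_{k\ge K}\|(x_k,r_k)-(0,\tfrac23)\|\le\epsilon\}$, the conditional probability that the future leaves an $\epsilon$-neighbourhood of $(0,\tfrac23)$ tends to $0$ almost surely.
\item Off that event, the bracket of the stopped, rescaled tail lies in an $O(\epsilon)$-band around $\frac{2/3}{2\gamma-1}$. The increments are at most $2/\sqrt m$ because $\Pi$ is increasing.
\item A triangular-array martingale CLT plus P\'olya's theorem then gives Kolmogorov-distance control that is uniform in the interval's location $-S_m/\sqrt m$.
\end{itemize}

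One claim you should not make is that the law of large numbers at $p=7/8$ is ``also proved via this ODE''. There $\kappa=0$, so the $x$-direction at $(0,\tfrac23)$ is neutral. Also, the unstable zero $(0,0)$ has degenerate noise. The paper needs three things: the Lyapunov function $V$, the argument that $x_n^2\to\ell$ with $\ell=0$, and the $\log A_n$ martingale argument excluding $r_n\to0$. You use $\alpha_k\to\frac23$ both for $\Pi_n=n^{1-o(1)}$ and inside the CLT at $p=7/8$, so cite \cref{thm:lln} instead. The same applies to ``$\alpha_n\to\alpha^*(\lim s_n)$'', which presupposes that $\lim s_n$ exists and that $\alpha_n\not\to0$.
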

Thus, for $11/16<p<7/8$, part~\textup{(i)} states equivalently that $|S_n|=n^{(8p-4)/3+o(1)}$ almost surely. We refer to $(8p-4)/3$ as the \emph{growth exponent} of $|S_n|$ in this regime. The convergence in part~\textup{(ii)} was obtained in \cite[Theorem~2.5(c)]{MaulikRoySadhukhan2025}; the new contribution is that this limit does not vanish. Our argument also reproves the convergence itself.

We next prove a law of large numbers valid for every $0<p\le7/8$. For $p<7/8$, this provides an alternative proof of the corresponding result in \cite{MaulikRoySadhukhan2025}. The case $p=7/8$, left open in \cite[Remark~2.3]{MaulikRoySadhukhan2025}, is settled here. Our result also resolves the corresponding open case in the stochastic approximation considered in \cite[Theorem~4.2]{MaulikRoySadhukhan2025}.

\begin{theorem}\label{thm:lln} For every $0<p\le7/8$,
	\begin{equation}\label{eq:lln}
		\frac{S_n}{n}\longrightarrow0, \qquad \frac{A_n}{n}\longrightarrow\frac23 \quad\as.
	\end{equation}
	In particular, at $p=7/8$ the walk has zero asymptotic velocity, and asymptotically two-thirds of its increments are nonzero.
\end{theorem}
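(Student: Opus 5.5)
The plan is to read $(x_n,a_n):=(S_n/n,A_n/n)$ as a two-dimensional stochastic approximation with step size $1/(n+1)$, identify its mean ODE, and show that every limit set of this ODE is the single point $(0,2/3)$.

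\textbf{Step 1 (drift).} Given $\F_n$, the history has $(A_n+S_n)/2$ steps equal to $+1$, $(A_n-S_n)/2$ equal to $-1$, and $n-A_n$ zeros. Write $x=x_n$, $a=a_n$. Each channel then returns $+1$, $-1$ or $0$ with probabilities
\[
(a+\mu x)/2,\qquad (a-\mu x)/2,\qquad 1-a .
\]
The increment $X_{n+1}=+1$ exactly when both channels give $+1$, or one gives $+1$ and the other gives $0$. A direct computation gives
\[
\E[X_{n+1}\mid\F_n]=\mu x(2-a),\qquad \Pp(X_{n+1}\neq0\mid\F_n)=2a-\tfrac32a^2+\tfrac12\mu^2x^2 .
\]
Hence
\[
x_{n+1}-x_n=\tfrac1{n+1}\bigl(F(x_n,a_n)+\xi_{n+1}\bigr),\qquad a_{n+1}-a_n=\tfrac1{n+1}\bigl(G(x_n,a_n)+\eta_{n+1}\bigr),
\]
where $F(x,a)=x(\mu(2-a)-1)$, $G(x,a)=a-\tfrac32a^2+\tfrac12\mu^2x^2$, and the noise terms are bounded martingale differences. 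The iterates stay in the compact set $K=\{|x|\le a\le1\}$, since $|S_n|\le A_n\le n$. By Benaïm's theory, the limit set of $(x_n,a_n)$ is then almost surely a compact, internally chain transitive set of the flow $\dot x=F$, $\dot a=G$ on $K$.

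\textbf{Step 2 (lower bound on $a$).} Since $G(x,a)\ge a-\tfrac32a^2$, compare $a_n$ with the one-dimensional recursion driven by $a-\tfrac32a^2$, whose only stable zero is $2/3$. The point to rule out is attraction to the repelling zero $a=0$. For this I would use that the drift is bounded below by $a_n/2$ when $a_n$ is small, together with $A_n\ge2$. Either a direct supermartingale argument on $\log a_n$, or the nonconvergence-to-unstable-points results of Pemantle, should give $\liminf a_n\ge 2/3$ almost surely. Consequently the limit set $L$ lies in $K\cap\{a\ge2/3\}$.

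\textbf{Step 3 (Lyapunov function, the main step).} On $\{a\ge2/3\}$ and for $\mu\le3/4$ (that is, $p\le7/8$), take $V=x^2$. Then
\[
\dot V=2x^2\bigl(\mu(2-a)-1\bigr)\le 2x^2\bigl(\tfrac43\mu-1\bigr)\le0 .
\]
In the strictly subcritical case $\mu<3/4$, this inequality is strict unless $x=0$, so $L\subset\{x=0\}$. On the line $x=0$ the $a$-dynamics has the unique attractor $2/3$ within $a\ge2/3$, so $L=\{(0,2/3)\}$.

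The critical case $\mu=3/4$ is the main obstacle, because $\dot V$ vanishes on the whole segment $\{a=2/3\}$ as well as on $\{x=0\}$. I would handle it with a LaSalle-type argument for chain transitive sets. Since $L$ is invariant and $V$ is nonincreasing on it, $L$ must lie in the largest invariant subset of $\{x=0\}\cup\{a=2/3\}$. At any point with $a=2/3$ and $x\ne0$ we have $\dot a=\tfrac98\cdot\tfrac{x^2}{4}\cdot\ldots>0$; more precisely $G(x,2/3)=\tfrac12\mu^2x^2>0$, so the flow immediately enters $\{a>2/3\}$, where $\dot V<0$. Thus the invariant part is contained in $\{x=0\}$. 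The remaining technical check is to make Benaïm's proposition, that a strict-outside-equilibria Lyapunov function whose value set on equilibria has empty interior forces $L$ into the equilibrium set, apply despite $V$ being only weakly decreasing. I would do this either by replacing $V$ with a perturbation such as $V=x^2+c\,x^2(a-2/3)$, which is strictly decreasing off $\{x=0\}$ near the critical segment (the center-manifold computation $b\approx\frac9{32}x^2$, $\dot x\approx-\frac{27}{128}x^3$ suggests this works), or by a direct invariance argument. Once $L\subset\{x=0\}$, the one-dimensional analysis gives $L=\{(0,2/3)\}$. This yields $S_n/n\to0$ and $A_n/n\to2/3$ almost surely for every $0<p\le7/8$.
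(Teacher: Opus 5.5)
Your proposal is correct in outline but takes a genuinely different route from the paper. The paper does not use Bena\"{\i}m's asymptotic-pseudotrajectory/ICT machinery.

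\textbf{The paper's route.} It takes the single quadratic function $V=x^2+\frac83(r-\frac23)^2$, for which $\nabla V\cdot H_p=2\kappa x^2(1+\mu y)-8ry^2\le0$ for all $p\le7/8$, the critical case included. Robbins--Siegmund then gives convergence of $V(x_n,r_n)$ together with a summability estimate. A deterministic slow-variation lemma converts this into $r_n\to0$ or $r_n\to2/3$, and, for $p<7/8$, into $x_n\to0$. At $p=7/8$ it uses $x_n^2\to\ell$ and shows that $\ell>0$ would keep the drift of $r_n$ bounded below by a positive constant. This is the discrete counterpart of your observation $G(x,2/3)=\frac12\mu^2x^2>0$. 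The corner $(0,0)$ is excluded last: $\log A_n$ is written as a predictable sum plus a martingale that is $o(\log n)$ by Kronecker, and on $\{r_n\to0\}$ this would force $A_n\ge n^{c_0+o(1)}$ with $c_0>1$. You reverse the order: you first confine the limit set to $\{a\ge2/3\}$, after which $x^2$ alone suffices. The ODE viewpoint makes the geometry transparent (a repelling corner, and a degenerate direction at $\mu=3/4$ resolved by the transverse drift), at the cost of importing Bena\"{\i}m's theorem. The paper's argument is fully elementary.

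\textbf{Points to tighten.}

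First, Pemantle's nonconvergence theorem does not apply at $(0,0)$. That point is a corner of $K$, and the noise degenerates there, since $\E(\eta_{n+1}^2\mid\F_n)=Q_n(1-Q_n)\le2a_n$.

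Your other option does work, and it needs only $A_n\ge2$. Use $\log a_{n+1}-\log a_n=I_{n+1}\log(1+1/A_n)-\log(1+1/n)$, together with $Q_n\ge\frac32a_n$ on $\{a_n\le1/3\}$ and $A_n\log(1+1/A_n)\ge2\log\frac32$. These give a conditional drift of $\log a_n$ of at least $(3\log\frac32-1)/n>0$ there. Hence $-\log a_n\ge0$ is a supermartingale while $a_n\le1/3$, its stopped versions converge, and $\Pp(a_n\to0)=0$.

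Second, passing from this to $L\subset\{a\ge2/3\}$ needs two facts about the flow, not a ``comparison'' of recursions:
\begin{enumerate}[label=\textup{(\alph*)}]
\item An ICT set meeting $\{a=0\}\cap K=\{(0,0)\}$ equals $\{(0,0)\}$. This holds because $\dot a\ge a/2$ on $K\cap\{a\le1/3\}$, which prevents $(\varepsilon,T)$-chains from returning to the corner.
\item $\min_L a\notin(0,2/3)$, by invariance, since $\dot a\ge a-\frac32a^2>0$ there.
\end{enumerate}

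Third, your LaSalle sentence is not valid for arbitrary compact invariant sets; LaSalle concerns $\omega$-limit sets. You can bypass it, and also the perturbed $V$ (which would need $c<0$), with a direct argument:
\begin{enumerate}[label=\textup{(\roman*)}]
\item Take $z\in L$ maximizing $x^2$. Its backward orbit stays in $L\subset\{a\ge2/3\}$.
\item Along that orbit $x^2$ is nonincreasing in forward time. So it is at least its value at $z$, which is the maximum over $L$, and therefore it is constant.
\item If $x(z)\ne0$, this is impossible for $\mu<3/4$.
\item For $\mu=3/4$, it forces $a\equiv2/3$ along the orbit, which contradicts $G(x,2/3)>0$.
\end{enumerate}
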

At $p=7/8$, the law of large numbers does not determine the scale of $|S_n|$ or the rate at which $A_n/n$ approaches $2/3$. The next theorem gives precise asymptotics for both quantities.

\begin{theorem}\label{thm:critical-scale}
	Let $p=7/8$. There exist an almost surely finite random variable $\Theta$ and a random sign $\Xi\in\{-1,1\}$ with $\Pp(\Xi=1)=\Pp(\Xi=-1)=1/2$ such that, almost surely, $\sgn(S_n)=\Xi$ for all sufficiently large $n$ and
	\begin{equation}\label{eq:Theta}
		\frac{n^2}{S_n^2}-\frac{27}{64}\log n \longrightarrow\Theta .
	\end{equation}
	Consequently
	\begin{equation}\label{eq:critical-position-scale}
		\frac{S_n}{n} =\frac{\Xi}{\sqrt{\dfrac{27}{64}\log n+\Theta+o(1)}}, \qquad\text{and in particular}\qquad \sqrt{\log n}\,\frac{S_n}{n} \longrightarrow \frac{8\sqrt3}{9}\,\Xi \quad\as.
	\end{equation}
	Moreover
	\begin{equation}\label{eq:critical-activity-scale}
		\frac{A_n}{n}-\frac23 =\frac{2}{3}\cdot\frac1{\log n+\frac{64}{27}\Theta+o(1)}, \quad\text{and in particular}\quad \log n\left(\frac{A_n}{n}-\frac23\right) \longrightarrow\frac23 \quad\as.
	\end{equation}
	Equivalently, almost surely,
	\[
		A_n=\frac{2n}{3}+\frac{2n}{3\log n} -\frac{128\,\Theta}{81}\cdot\frac{n}{(\log n)^2} +o\!\left(\frac{n}{(\log n)^2}\right).
	\]
\end{theorem}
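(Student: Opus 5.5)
The plan is to reduce everything to a two-dimensional stochastic approximation in the variables $s_n:=S_n/n$ and $b_n:=A_n/n-\tfrac23$, and then to integrate the resulting slow one-dimensional dynamics for $Z_n:=n^2/S_n^2=s_n^{-2}$.

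\textbf{Step 1 (exact drifts).} Given $\F_n$, each channel returns $+1$, $-1$, $0$ with probabilities $\tfrac{a}{2}+\tfrac{\mu s}{2}$, $\tfrac a2-\tfrac{\mu s}{2}$, $1-a$, where $a=A_n/n$ and $s=s_n$. A direct computation gives
\[
\E[X_{n+1}\mid\F_n]=\mu s(2-a), \qquad \Pp(X_{n+1}\ne0\mid\F_n)=2a-\tfrac32a^2+\tfrac{\mu^2s^2}{2}.
\]
At $p=7/8$ we have $\mu=3/4$. Hence, exactly,
\[
n(s_{n+1}-s_n)\approx -\tfrac34 b_ns_n+\text{noise}, \qquad n(b_{n+1}-b_n)\approx -b_n-\tfrac32b_n^2+\tfrac{9}{32}s_n^2+\text{noise},
\]
up to $O(1/n)$ corrections from replacing $n+1$ by $n$. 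The noise terms are bounded martingale differences.

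By \cref{thm:lln}, $(s_n,b_n)\to(0,0)$. By \cref{thm:main} the walk is transient, so $S_n\ne0$ eventually. Since $|S_{n+1}-S_n|\le1$ and $|S_n|\to\infty$, $\sgn(S_n)$ is eventually constant; call this sign $\Xi$. Its law is symmetric by the symmetry $X\mapsto -X$.

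\textbf{Step 2 (slaving the fast variable).} The variable $b$ relaxes at rate $1$ in logarithmic time $t=\log n$, while $s$ moves only at rate $O(s^3)$. So $b_n$ should track the slow manifold $b^*(s)$. This manifold is defined by
\[
b+\tfrac32 b^2=\tfrac9{32}s^2-\tfrac{db^*}{dt},
\]
which gives $b^*=\tfrac9{32}s^2-\tfrac32(\tfrac9{32})^2s^4-\tfrac{9}{16}s\dot s+O(s^6)$, with $\dot s=-\tfrac34 b^* s\approx-\tfrac{27}{128}s^3$.

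I would introduce the corrected variable
\[
D_n:=b_n-\tfrac{9}{32}s_n^2+c\,s_n^4.
\]
The constant $c$ is chosen so that the drift of $D_n$ is $-D_n/n+O(s_n^6/n)+O(D_n s_n^2/n)$. The $s^4$-coefficients then cancel exactly: $-\tfrac32\cdot\tfrac{81}{1024}+\tfrac{243}{2048}=0$, so in fact $c=0$ suffices to that order. Solving the linear recursion $D_{n+1}=(1-\tfrac1n)D_n+(\text{error})/n+\text{martingale}/n$ should then give
\[
D_n=O\bigl((\log n)^{-3}\bigr)+O\bigl(n^{-1/2+\varepsilon}\bigr) \quad \as
\]
The martingale part is controlled by a Burkholder or LIL bound applied to $\sum_k k\,\Delta M_k$, divided by $n$.

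\textbf{Step 3 (integrating $Z_n$).} A Taylor expansion of $s\mapsto s^{-2}$ gives
\[
Z_{n+1}-Z_n=\frac{3}{2}\cdot\frac{b_n}{s_n^2}\cdot\frac1n+\frac{\xi_{n+1}}{s_n^3n}+O\!\left(\frac{1}{s_n^4n^2}\right),
\]
where $\xi_{n+1}$ are bounded martingale differences. Substituting $b_n=\tfrac9{32}s_n^2+D_n$ gives the main term $\tfrac{27}{64}\cdot\frac1n$, which produces $\tfrac{27}{64}\log n$.

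A bootstrap argument shows the remaining terms are summable. First, crude bounds (from \cref{thm:escape-scale}(i), $|s_n|\ge n^{-\varepsilon}$) show that $Z_n$ grows at most polynomially slowly. Then the conditional variance of the martingale term is $\sum 1/(s_n^6n^2)<\infty$, and the Taylor remainder is summable as well. The drift error is $\sum D_n/(s_n^2n)$. With $s_n^{-2}\asymp\log n$ and $D_n=O((\log n)^{-3})$, this is $\sum 1/(n\log^2 n)<\infty$. Hence $Z_n-\tfrac{27}{64}\log n\to\Theta$ almost surely, which is \eqref{eq:Theta}.

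Equation \eqref{eq:critical-position-scale} follows by taking square roots and attaching the sign $\Xi$. For \eqref{eq:critical-activity-scale}, write
\[
b_n=\tfrac{9}{32}s_n^2+O\bigl((\log n)^{-3}\bigr)=\tfrac{9}{32}\cdot\frac{1}{\tfrac{27}{64}\log n+\Theta+o(1)}=\tfrac23\cdot\frac{1}{\log n+\tfrac{64}{27}\Theta+o(1)}.
\]
Expanding this to second order gives the stated formula for $A_n$.

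\textbf{Main obstacle.} The hardest step is Step 2: proving that $D_n$ is $o\bigl(s_n^4/\log n\bigr)$ with a summable error, rather than merely $O(s_n^4)$. A bound of order $s^4$ alone would leave a divergent $\sum 1/(n\log n)$ term in Step 3, producing a spurious $\log\log n$ correction. So the exact cancellation of the $s^4$-terms in the slow-manifold expansion must be exploited carefully.

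This step is coupled with Step 3: the bound on $D_n$ needs $s_n^2\asymp 1/\log n$, and that in turn needs $D_n$ small. I would handle this circularity by a stopping-time bootstrap. Work on events where $s_n^2\in[\tfrac{1}{C\log n},\tfrac{C}{\log n}]$ from some random time onward, show that these events hold eventually almost surely, and then sharpen the bounds.
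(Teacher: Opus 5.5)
Your plan is correct and is essentially the paper's proof: with $c=0$, your $D_n$ is exactly the paper's $z_n=y_n-\tfrac{9}{32}x_n^2$ (your $b_n,s_n$ are its $y_n,x_n$), and your Step~3 is its expansion of $1/x_n^2$, whose martingale term should carry a harmless factor $-2$. The one thing you missed is that the order-$s^4$ cancellation you found holds exactly to all orders: the parabola $y=\tfrac{9}{32}x^2$ is invariant, so the drift of $z_n$ is exactly $-(1+\tfrac32 y_n)z_n/(n+1)$ up to an $O(n^{-2})$ term. Consequently a Robbins--Siegmund argument on $(n+1)^\theta z_n^2$ gives $n^\theta z_n^2\to0$ for every $\theta<1$, and Step~3 then closes using only the crude bound $|x_n|\ge cn^{-\varepsilon}$ from \cref{prop:lower-escape}, with no bootstrap on $s_n^2\asymp1/\log n$.
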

The walk at $p=7/8$ is therefore sub-ballistic: $|S_n|$ has order $n/\sqrt{\log n}$, while $A_n/n$ approaches $2/3$ at rate $1/\log n$. In particular, the growth exponent in \cref{thm:escape-scale} equals one at $p=7/8$, although $S_n/n\to0$. The random variable $\Theta$ gives more detailed information about these two limits. We do not determine its distribution; see \cref{sec:conclusion}.

\paragraph{AI disclosure.}
Generative AI tools were used during the preparation of this manuscript to improve the English presentation and readability. They were also used as an auxiliary tool in exploring possible refinements of the results, which contributed to strengthening \cref{thm:critical-scale} compared with its initial version. All mathematical statements, arguments, and proofs were subsequently checked and verified by the author, who takes full responsibility for the content of the manuscript.

\subsection{Proof strategy}

The main idea is to remove the zero increments and compare the resulting embedded walk with a classical ERW. Let $\tau_k$ be the time of the $k$th nonzero step and set
\[
    \widehat S_k:=S_{\tau_k}.
\]
Thus $\widehat S$ records the position of the original walk only at its nonzero steps. Although $\widehat S$ is not itself a classical ERW, the probability that its next step moves away from the origin has asymptotically the same form as the corresponding outward probability for a classical ERW. This comparison recovers the effective memory parameter introduced in Section~3: matching the two coefficients gives

$$
\qeff(p)=\frac12+\frac{2(2p-1)}{3}=\frac{8p-1}{6}.
$$
Thus $\qeff(p)$ is precisely the parameter for which the embedded walk has, asymptotically, the same outward bias as a classical ERW. The two thresholds appearing in the model are then related to the classical ERW through
\begin{equation}\label{eq:two-coincidences}
    \qeff(p)>\frac34 \quad\Longleftrightarrow\quad p>\frac{11}{16}, \qquad \qeff(p)=1 \quad\Longleftrightarrow\quad p=\frac78.
\end{equation}

The comparison is based on upper and lower couplings for $|\widehat S_k|$. Below $p=11/16$, an upper comparison with a recurrent classical ERW yields recurrence. Above $p=11/16$, a lower comparison with a superdiffusive classical ERW gives transience and polynomial lower bounds for the distance from the origin. In the range $11/16<p<7/8$, these bounds are strong enough to show that the almost-sure scaling limit
\[
    \frac{S_n}{n^{(8p-4)/3}}
\]
is nonzero. At $p=7/8$, the same comparison proves transience, while the precise scale of the walk requires a finer analysis of the recursion for the original process.

A second ingredient is the law of large numbers
\[
    \frac{S_n}{n}\longrightarrow0, \qquad \frac{A_n}{n}\longrightarrow\frac23
    \quad\as,
\]
valid for every $0<p\le7/8$. We prove it directly from the stochastic approximation for $(S_n/n,A_n/n)$ using a Lyapunov method. This result determines the asymptotic proportion of nonzero steps and is used in the comparison with the embedded walk.

Finally, at $p=7/8$, the law of large numbers does not determine the scale of the walk. A more precise analysis of the recursion yields the logarithmic correction and leads to the scale $n/\sqrt{\log n}$ for $|S_n|$, together with the corresponding $1/\log n$ correction for $A_n/n$.

The only results taken from \cite{MaulikRoySadhukhan2025} are recurrence at $p=11/16$ and the almost-sure convergence of $S_n/n$ to a nonzero limit for $p>7/8$. We also use the classical one-dimensional ERW results recalled in
\cref{prop:erw-recurrent,prop:erw-escape}, taken from \cite{Qin2025}. All other arguments are proved here.

\textbf{Organization.}
In \cref{sec:model} we derive the stochastic-approximation representation and establish the law of large numbers, \cref{thm:lln}. \Cref{sec:embedded-walk} introduces the embedded walk and develops the comparison with classical ERWs.
These comparison results are used in \cref{sec:main-proof} to prove the recurrence--transience classification in \cref{thm:main} and the growth-exponent results in \cref{thm:escape-scale}. \Cref{sec:critical-scale} treats the case $p=7/8$ and proves the asymptotics in \cref{thm:critical-scale}. Finally, \cref{sec:conclusion} concludes the paper.

\section{Stochastic approximation and the law of large numbers}\label{sec:model}
We first derive the conditional transition probabilities and write the resulting recursion in stochastic-approximation form. We then construct a Lyapunov function and use it to control the long-time behavior of the normalized process, which yields the law of large numbers \eqref{eq:lln} for $0<p\le7/8$. This result will be used in the next section.

\subsection{Transition probabilities and stochastic approximation}

We first compute the one-step transition probabilities. Among the first $n$ increments, the numbers of $+1$ and $-1$ steps are
\[
	\frac{A_n+S_n}{2} \qquad\text{and}\qquad \frac{A_n-S_n}{2},
\]
respectively. Conditionally on $\F_n$, for each $i\in\{1,2\}$,
\begin{align*}
	\Pp(Y_{i,n+1}=1\mid\F_n)
	&=\frac{p(A_n+S_n)+(1-p)(A_n-S_n)}{2n}
	=\frac{A_n+\mu S_n}{2n}
	=:a_n,\\
	\Pp(Y_{i,n+1}=-1\mid\F_n)
	&=\frac{(1-p)(A_n+S_n)+p(A_n-S_n)}{2n}
	=\frac{A_n-\mu S_n}{2n}
	=:b_n,\\
	\Pp(Y_{i,n+1}=0\mid\F_n)
	&=1-\frac{A_n}{n}
	=:c_n.
\end{align*}

The two channels are conditionally independent. By the definition \eqref{eq:model}, the next increment is $1$ precisely when the ordered pair of returned values is $(1,1)$, $(1,0)$, or $(0,1)$. Similarly, it is $-1$ for the ordered pairs $(-1,-1)$, $(-1,0)$, and $(0,-1)$. Hence
\begin{align}
    \Pp(X_{n+1}=1\mid\F_n)
    &=a_n^2+2a_nc_n,\\
    \Pp(X_{n+1}=-1\mid\F_n)
    &=b_n^2+2b_nc_n.
    \end{align}
    Consequently,
    \begin{align}
    \E(X_{n+1}\mid\F_n)
    &=\frac{\mu S_n(2n-A_n)}{n^2},
    \label{eq:drift-S}\\
    \Pp(X_{n+1}\ne0\mid\F_n)
    &=\frac{4nA_n-3A_n^2+\mu^2S_n^2}{2n^2}.
    \label{eq:activity}
\end{align}

\begin{remark}\label{rem:markov}
	The transition probabilities above depend on the past only through $(n,S_n,A_n)$. Hence $(S_n,A_n)_{n\ge2}$ is a time-inhomogeneous Markov chain, or equivalently, $(n,S_n,A_n)_{n\ge2}$ is a Markov chain. We shall use the standard strong Markov property at the stopping times introduced in
\cref{sec:embedded-walk}.
\end{remark}
Set
\begin{equation}\label{eq:scaled}
	x_n:=\frac{S_n}{n}, \qquad r_n:=\frac{A_n}{n}.
\end{equation}
Since $|S_n|\le A_n\le n$, the normalized process takes values in the compact triangle
\begin{equation}\label{eq:D}
	D:=\{(x,r)\in\R^2: |x|\le r\le1\}.
\end{equation}
Let $I_{n+1}:=\1_{\{X_{n+1}\ne0\}}$, so that $S_{n+1}=S_n+X_{n+1}$ and $A_{n+1}=A_n+I_{n+1}$. One has the exact recursions
\[
	x_{n+1}-x_n=\frac{1}{n+1}\bigl(X_{n+1}-x_n\bigr), \qquad r_{n+1}-r_n=\frac{1}{n+1}\bigl(I_{n+1}-r_n\bigr).
\]
From \eqref{eq:drift-S} and \eqref{eq:activity} in the coordinates
\eqref{eq:scaled}, one has
\[
	\E(X_{n+1}\mid\F_n)-x_n = x_n\left\{\mu(2-r_n)-1\right\}, \qquad \E(I_{n+1}\mid\F_n)-r_n =r_n-\frac32r_n^2+\frac{\mu^2}{2}x_n^2.
\]
Thus, we obtain the stochastic-approximation recursion
\begin{equation}\label{eq:SA}
	\binom{x_{n+1}}{r_{n+1}} = \binom{x_n}{r_n} +\frac{1}{n+1} \left[ H_p(x_n,r_n)+\Delta M_{n+1} \right],
\end{equation}
where
\[
	\Delta M_{n+1} 	:= 	\binom{X_{n+1}-\E(X_{n+1}\mid\F_n)} {I_{n+1}-\E(I_{n+1}\mid\F_n)}
\]
is a martingale-difference sequence, bounded since $|X_{n+1}|\le1$ and
$I_{n+1}\in\{0,1\}$, and
\begin{equation}\label{eq:H}
	H_p(x,r) := \binom{x\{\mu(2-r)-1\}} {r-\frac32r^2+\frac{\mu^2}{2}x^2}.
\end{equation}
This is the same stochastic-approximation recursion as in \cite{MaulikRoySadhukhan2025}, written in the coordinates $(x_n,r_n)$. We then establish the law of large numbers for $0<p\le7/8$ using a Lyapunov function.

\subsection{Proof of the law of large numbers}\label{subsec:critical-lln}

Set
\begin{equation}\label{eq:kappa}
	\kappa:=\frac{4\mu}{3}-1=\frac{8p-7}{3}.
\end{equation}
Thus $\kappa<0$ for $p<7/8$, $\kappa=0$ for $p=7/8$, and $\kappa>0$ for $p>7/8$. For $p\le7/8$, solving $H_p(x,r)=0$ on $D$ gives
\begin{equation}\label{eq:zeros}
	\{H_p=0\}\cap D =\left\{(0,0),\left(0,\frac23\right)\right\}.
\end{equation}
Indeed, the first coordinate of $H_p$ vanishes either when $x=0$, in which case the second coordinate forces $r\in\{0,2/3\}$, or when $r=r_0:=2-1/\mu$. In the latter case the second coordinate requires $\tfrac{\mu^2}{2}x^2=r_0\bigl(\tfrac32r_0-1\bigr)$, whose right-hand side is nonpositive because $\mu\le3/4$ gives $r_0\le2/3$; hence $x=0$ and $r_0\in\{0,2/3\}$, and no further zero arises. 

Consider
\begin{equation}\label{eq:V}
	V(x,r):=x^2+\frac83\left(r-\frac23\right)^2.
\end{equation}
Writing $y=r-2/3$, we have
\begin{equation}\label{eq:Vdot}
	\nabla V(x,r)\cdot H_p(x,r) = 2\kappa x^2(1+\mu y)-8ry^2.
\end{equation}
Since $1+\mu y\ge1/3$ and $\kappa\le0$ for $p\le7/8$,
\begin{equation}\label{eq:Vdot-bound}
	\nabla V(x,r)\cdot H_p(x,r) \le -\frac{2|\kappa|}{3}x^2 -8r\left(r-\frac23\right)^2 \le0.
\end{equation}

We now apply \eqref{eq:Vdot} to the recursion \eqref{eq:SA}, together with the auxiliary convergence lemma proved in
\cref{app:slow-variation}. This yields the following dichotomy.

\begin{proposition}\label{prop:two-limits}
	For every $0<p\le7/8$,
	\begin{equation}\label{eq:two-limits}
		(x_n,r_n)\longrightarrow (0,0) 	\quad\text{or}\quad (x_n,r_n)\longrightarrow  \left(0,\frac23\right) \qquad\as.
	\end{equation}
\end{proposition}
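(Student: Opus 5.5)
The plan is a Robbins--Siegmund (almost-supermartingale) argument for $V(x_n,r_n)$, followed by identification of the limit set using the zero set \eqref{eq:zeros}. Set $V_n:=V(x_n,r_n)$ and expand $V$ along the recursion \eqref{eq:SA}. Since $V$ is a quadratic polynomial and $(x_n,r_n)$ stays in the compact set $D$ with bounded increments $H_p+\Delta M_{n+1}$, Taylor's formula gives
\[
V_{n+1}=V_n+\frac{1}{n+1}\nabla V(x_n,r_n)\cdot\bigl(H_p(x_n,r_n)+\Delta M_{n+1}\bigr)+O\!\left(\frac{1}{(n+1)^2}\right),
\]
with a deterministic constant in the error term. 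Taking conditional expectations and using \eqref{eq:Vdot-bound}, we get
\[
\E(V_{n+1}\mid\F_n)\le V_n-\frac{1}{n+1}G(x_n,r_n)+\frac{C}{(n+1)^2},
\qquad
G(x,r):=\frac{2|\kappa|}{3}x^2+8r\left(r-\frac23\right)^2\ge0 .
\]
The Robbins--Siegmund theorem then shows that $V_n$ converges almost surely to a finite limit $V_\infty$, and that $\sum_n G(x_n,r_n)/(n+1)<\infty$ almost surely.

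Next we identify the limit. First consider the second coordinate. Because $\sum 1/n=\infty$, summability forces $\liminf_n G(x_n,r_n)=0$. More precisely, we want $G(x_n,r_n)\to0$ along the whole sequence. For this we use the slow-variation lemma from \cref{app:slow-variation}. The increments satisfy $|(x_{n+1},r_{n+1})-(x_n,r_n)|=O(1/n)$, and the martingale $\sum\Delta M_{k+1}/(k+1)$ converges almost surely because its quadratic variation is summable. Hence, over any block of indices with $\sum_{k=n}^{m}1/k\le T$, the path stays within $o(1)$ of the flow of $\dot z=H_p(z)$. Along such a block $G$ can only change by $O(T)$. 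A standard argument then applies: if $G(z_n)>\varepsilon$ infinitely often, then $G>\varepsilon/2$ on blocks of harmonic length bounded below, which contradicts summability.

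With $G(z_n)\to0$ in hand, we split into two cases.

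\emph{Case $p<7/8$.} Here $\kappa<0$, so $G(z_n)\to0$ gives $x_n\to0$ and $r_n(r_n-2/3)^2\to0$. Therefore every limit point lies in $\{(0,0),(0,2/3)\}$. This is the zero set \eqref{eq:zeros}, a discrete set. Since consecutive steps are $O(1/n)$, the limit set is connected, so it is a single point.

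\emph{Case $p=7/8$.} This is the main obstacle, because $\kappa=0$ and $G$ gives no control on $x$. We only learn that $r_n\to\{0,2/3\}$, and again by connectedness $r_n$ converges to one of these two values. If $r_n\to0$, then $|x_n|\le r_n$ forces $x_n\to0$. If $r_n\to2/3$, then $V_n\to V_\infty$ gives $x_n^2\to V_\infty$, so $|x_n|\to\sqrt{V_\infty}=:c$. It remains to show $c=0$. For this we use the second coordinate of $H_p$. Near $r=2/3$ with $x^2\approx c^2$, it equals $\frac{\mu^2}{2}c^2+o(1)$, which is strictly positive if $c>0$. The stochastic-approximation recursion for $r_n$, whose martingale part converges, would then give $r_{m}-r_n\ge \frac{\mu^2c^2}{4}\sum_{k=n}^{m}\frac1k-o(1)$. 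Taking $m$ with harmonic length $\ge1$ contradicts $r_n\to2/3$. Hence $c=0$.

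Combining the two cases gives \eqref{eq:two-limits}. Alternatively, one can invoke the appendix lemma directly: limit sets are compact, connected and internally chain transitive for the flow of $H_p$, and $V$ is a strict Lyapunov function away from $\{H_p=0\}$. At $p=7/8$, however, the strict decrease is not automatic off the zero set, so the argument via the $r$-drift given above is the version I would carry out.
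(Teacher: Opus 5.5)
Your proposal is correct and follows the paper's proof essentially step for step. The shared skeleton is: Robbins--Siegmund for $V_n$; \cref{lem:slow-variation}, which you apply to $\frac{2|\kappa|}{3}x_n^2+8r_n(r_n-\tfrac23)^2$ jointly where the paper applies it separately to $r_n(r_n-\tfrac23)^2$ and $x_n^2$; the small-increment (connectedness) argument to select one of $0$ and $\tfrac23$; and, at $p=7/8$, the strictly positive $r$-drift $\tfrac{\mu^2}{2}c^2$ together with the convergent martingale series to exclude $c>0$. The ODE-tracking remark in your second paragraph is unnecessary: Lipschitz continuity of that function on $D$ and the $O(1/n)$ increments of $(x_n,r_n)$ already give exactly the increment bound \cref{lem:slow-variation} needs.
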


\begin{proof}
	The proof has three steps. First we show that $V(x_n,r_n)$ converges. Second we deduce that $r_n$ can approach only $0$ or $2/3$. Finally we show that, if $r_n\to2/3$, then necessarily $x_n\to0$.
    
	\smallskip
	\emph{Step 1.}
	Let $Z_n:=(x_n,r_n)$ and $V_n:=V(Z_n)$, and abbreviate the increment in
	\eqref{eq:SA} as
	\[
		Z_{n+1}-Z_n=\frac{1}{n+1}\bigl[H_p(Z_n)+\Delta M_{n+1}\bigr]=:\Delta_n, \qquad \|\Delta_n\|\le\frac{C_0}{n+1}, \qquad C_0:=\sup_{D}\|H_p\|+3 ,
	\]
	the bound holding with this deterministic $C_0$ because $D$ in \eqref{eq:D} is compact, $H_p$ is continuous, and $\|\Delta M_{n+1}\|\le\sqrt5$ for the Euclidean norm. Since $V$ is a quadratic polynomial, its second-order Taylor expansion is an exact identity with constant Hessian $\nabla^2V=\operatorname{diag}(2,16/3)$:
	\[
		V_{n+1}=V_n+\nabla V(Z_n)\cdot\Delta_n +\frac12\,\Delta_n^\top\nabla^2V\,\Delta_n .
	\]
	We bound these two terms separately. For the linear term, substitute $\Delta_n$ and split according to
	\eqref{eq:SA}:
	\[
		\begin{aligned}
			\nabla V(Z_n)\cdot\Delta_n  =\frac{1}{n+1}\,\nabla V(Z_n)\cdot H_p(Z_n) +\frac{1}{n+1}\,\nabla V(Z_n)\cdot\Delta M_{n+1}.
		\end{aligned}
	\]
	By \eqref{eq:Vdot-bound}, evaluated at $Z_n\in D$, the first summand is at
	most
	\[
		-\frac{1}{n+1} \left(\frac{2|\kappa|}{3}x_n^2+8\,r_n\left(r_n-\frac23\right)^2 \right).
	\]
	The second summand has vanishing conditional expectation, since $\nabla V(Z_n)$ is $\F_n$-measurable and $\E(\Delta M_{n+1}\mid\F_n)=0$. For the quadratic term, the Hessian being constant and $\Delta_n$ bounded as above,
	\[
		\left|\frac12\,\Delta_n^\top\nabla^2V\,\Delta_n\right| \le\frac{8}{3}\,\|\Delta_n\|^2 \le\frac{C}{(n+1)^2}, \qquad C:=\frac83C_0^2 .
	\]
	Taking conditional expectations in the Taylor identity and combining the
	three estimates yields
	\begin{equation}\label{eq:V-supermartingale}
		\E(V_{n+1}\mid\F_n) \le V_n -\frac{1}{n+1} \left(\frac{2|\kappa|}{3}x_n^2+8\,r_n\left(r_n-\frac23\right)^2 \right) +\frac{C}{(n+1)^2}.
	\end{equation}
	The error $C/(n+1)^2$ is summable, so the Robbins--Siegmund
	theorem~\cite{RobbinsSiegmund1971} gives
	\begin{equation}\label{eq:V-convergence}
		V_n\longrightarrow V_\infty \quad\text{and}\quad \sum_{n=2}^{\infty}\frac{1}{n+1} \left(\frac{2|\kappa|}{3}x_n^2+8\,r_n\left(r_n-\frac23\right)^2\right) <\infty
		\qquad\as.
	\end{equation}
    
	\smallskip
	\emph{Step 2.}
	Set $h(r)=r(r-2/3)^2$. Since
	\[
		|r_{n+1}-r_n|=\frac{|I_{n+1}-r_n|}{n+1}\le\frac1{n+1}
	\]
	and $h$ is Lipschitz on $[0,1]$, \cref{lem:slow-variation}, applied pathwise together with \eqref{eq:V-convergence}, implies 
	\begin{equation}\label{eq:h-to-zero}
		h(r_n)\longrightarrow 0 \quad\as.
	\end{equation}
	Thus
\[
	\operatorname{dist}\left(r_n,\left\{0,\frac23\right\}\right) \longrightarrow0.
\]
Moreover, $|r_{n+1}-r_n|\le1/(n+1)$, so $r_n$ cannot switch infinitely often between disjoint neighborhoods of $0$ and $2/3$. Therefore,
\begin{equation}\label{eq:r-dichotomy}
	r_n\longrightarrow0 \quad\text{or}\quad r_n\longrightarrow\frac23 \qquad\as.
\end{equation}
If $r_n\to0$, then $|x_n|\le r_n$ implies $x_n\to0$, and hence $Z_n\to(0,0)$.
    
	\smallskip
	\emph{Step 3.}
	Suppose first that $p<7/8$, so that $\kappa<0$. Then
	\eqref{eq:V-convergence} contains $\sum_n x_n^2/(n+1)<\infty$, while
	\[
		\left|x_{n+1}^2-x_n^2\right| =\left|x_{n+1}-x_n\right|\left|x_{n+1}+x_n\right| \le\frac{2}{n+1}\cdot2=\frac4{n+1},
	\]
	because $|x_{n+1}-x_n|=\frac{1}{n+1}|X_{n+1}-x_n| \le\frac{2}{n+1}$ and $|x_n|\le1$. Hence \cref{lem:slow-variation}, applied to $u_n=x_n^2$, gives $x_n\to0$; this holds irrespective of which alternative in \eqref{eq:r-dichotomy} occurs, and completes the proof when $p<7/8$. There remains the case $p=7/8$, where $\kappa=0$ and the sum in \eqref{eq:V-convergence} carries no information about $x_n$. So assume that $p=7/8$ and $r_n\to2/3$. From \eqref{eq:V-convergence}, $x_n^2=V_n-\frac83(r_n-2/3)^2$ converges to some $\ell\ge0$. We show that $\ell=0$. The second coordinate of \eqref{eq:SA} is 
	\begin{equation}\label{eq:r-recursion-critical}
		r_{n+1}-r_n =\frac1{n+1} \left(r_n-\frac32r_n^2+\frac{9}{32}x_n^2+\eta_{n+1}\right),
	\end{equation}
	where $(\eta_{n+1})$ is a bounded martingale-difference sequence. Since
	\[
		\sum_{n=2}^{\infty} \frac{\E(\eta_{n+1}^2\mid\F_n)}{(n+1)^2}<\infty \quad\as,
	\]
	the martingale series
	\[
		\sum_{n\ge2}\frac{\eta_{n+1}}{n+1}
	\]
	converges almost surely, see, e.g., \cite[Theorem 2.17]{HallHeyde1980}. If $\ell>0$, then
	\[
		d_n:=r_n-\frac32r_n^2+\frac{9}{32}x_n^2 \longrightarrow \frac{9}{32}\ell>0.
	\]
	Hence there are $c>0$ and $n_0$ such that $d_n\ge c$ for every $n\ge n_0$. Summing \eqref{eq:r-recursion-critical} from $n_0$ to $n'-1$ gives
	\[
		r_{n'}-r_{n_0} =\sum_{n=n_0}^{n'-1}\frac{d_n}{n+1} +\sum_{n=n_0}^{n'-1}\frac{\eta_{n+1}}{n+1} \ge c\sum_{n=n_0}^{n'-1}\frac1{n+1}+O(1),
	\]
	because the martingale series converges. The right-hand side tends to $+\infty$ as $n'\to\infty$, contradicting $0\le r_{n'}\le1$. Therefore $\ell=0$, and $Z_n\to(0,2/3)$.
\end{proof}

To complete the proof of the law of large numbers, we must rule out convergence to $(0,0)$. We first prove one fact that will also be used later for the embedded walk.
\begin{lemma}\label{lem:A-infinity}
	For every $p\in(0,1)$,
	\[
		A_n\longrightarrow\infty \quad\as.
	\]
\end{lemma}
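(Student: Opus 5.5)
The plan is to show directly that the conditional probability of a nonzero step never decays faster than $1/n$. Then Lévy's conditional (extended) Borel--Cantelli lemma forces infinitely many nonzero steps. Since $A_n$ is nondecreasing, this gives $A_n\to\infty$. No information from \cref{prop:two-limits} is needed, which matters because we later want to use this lemma to rule out the limit $(0,0)$.

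\emph{Step 1: a uniform lower bound.} Start from \eqref{eq:activity} and drop the nonnegative term $\mu^2S_n^2$:
\[
	\Pp(X_{n+1}\ne0\mid\F_n)\ \ge\ \frac{A_n(4n-3A_n)}{2n^2}.
\]
Since $A_n\le n$, we have $4n-3A_n\ge n$. Since $A_n\ge A_2=2$ for all $n\ge2$, it follows that
\[
	\Pp(X_{n+1}\ne0\mid\F_n)\ \ge\ \frac{A_n}{2n}\ \ge\ \frac1n \qquad (n\ge2).
\]
This holds deterministically, for every $p\in(0,1)$.

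\emph{Step 2: conditional Borel--Cantelli.} Let $E_{n+1}:=\{X_{n+1}\ne0\}\in\F_{n+1}$. By Step 1,
\[
	\sum_{n\ge2}\Pp(E_{n+1}\mid\F_n)\ \ge\ \sum_{n\ge2}\frac1n=\infty
\]
surely. Lévy's extension of the Borel--Cantelli lemma says that $\{E_{n}\ \text{i.o.}\}$ coincides almost surely with $\{\sum_n\Pp(E_{n+1}\mid\F_n)=\infty\}$. Hence $E_{n+1}$ occurs for infinitely many $n$ almost surely. Since $A_n=\sum_{j\le n}\1_{E_j}$ is nondecreasing, $A_n\to\infty$ almost surely.

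As an alternative to citing Lévy's lemma, one can argue with a product bound. For each fixed $m$, the probability that no nonzero step occurs during $(m,N]$ is at most $\prod_{n=m}^{N-1}(1-1/n)=(m-1)/(N-1)$, which tends to $0$ as $N\to\infty$. This follows by iterating the bound from Step 1 with the tower property.

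There is no real obstacle here. The only point requiring care is that the lower bound must be uniform in the state and must not rely on $r_n$ staying away from $0$. The inequality $4n-3A_n\ge n$ together with $A_n\ge2$ handles exactly this, which is why the argument works even on the a priori possible event $r_n\to0$.
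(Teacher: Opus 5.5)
Your proof is correct and rests on the same key estimate as the paper: dropping $\mu^2S_n^2$ in \eqref{eq:activity} and using $A_n\le n$ gives the hazard bound $\Pp(X_{n+1}\ne0\mid\F_n)\ge A_n/(2n)$, whose harmonic divergence forces another nonzero step. The paper turns this into a product bound on $\Pp(\tau_{k+1}>\ell\mid\F_{\tau_k})$ with induction on the level $k$, which is essentially your alternative argument; your main route via L\'evy's conditional Borel--Cantelli lemma, using the uniform bound $1/n$ coming from $A_n\ge2$, is an equally valid shortcut.
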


\begin{proof}
Since $(A_n)_{n\ge2}$ is nondecreasing and integer-valued, it either diverges to $\infty$ or is eventually constant. It is therefore enough to show that, for every $k\ge2$, once the process reaches level $k$, it almost surely reaches level $k+1$.

Define
\[
    \tau_k:=\inf\{n\ge2:A_n=k\}.
\]
Fix $k\ge2$ and suppose that $\tau_k=m<\infty$. As long as no further
nonzero step occurs, $A_n=k$, and by \eqref{eq:activity},
\[
    \Pp(I_{n+1}=1\mid\F_n) = \frac{4kn-3k^2+\mu^2S_n^2}{2n^2} \ge \frac{k}{2n}, \qquad n\ge m.
\]
Hence, for every $\ell>m$,
\[
    \Pp(\tau_{k+1}>\ell\mid\F_m) \le \prod_{n=m}^{\ell-1} \left(1-\frac{k}{2n}\right).
\]
Since $\sum_n n^{-1}=\infty$, the product on the right tends to zero as $\ell\to\infty$. Therefore,
\[
    \Pp(\tau_{k+1}<\infty\mid\F_{\tau_k})=1
    \qquad\text{on }\{\tau_k<\infty\}.
\]
Since $\tau_2=2$, induction gives $\tau_k<\infty$ almost surely for every $k\ge2$, and hence
\[
    A_n\longrightarrow\infty \qquad\as.
\]
\end{proof}

We now combine the dichotomy in \cref{prop:two-limits} with the divergence of $A_n$ to prove \cref{thm:lln}.

\begin{proof}[Proof of \cref{thm:lln}]
	By \cref{prop:two-limits}, the only possible limits are $(0,0)$ and $(0,2/3)$. It remains to exclude the first one. Since
	\[
		\{(x_n,r_n)\to(0,0)\}\subseteq\{r_n\to0\},
	\]
	it suffices to prove that $\Pp(E)=0$, where
	\(
		E:=\{r_n\to0\}.
	\)
	Write $Q_n:=\Pp(I_{n+1}=1\mid\F_n)$. By \eqref{eq:activity},
	\[
		Q_n=\frac{4nA_n-3A_n^2+\mu^2S_n^2}{2n^2}.
	\]
	Since $0\le\mu^2S_n^2\le A_n^2$, we obtain
\begin{equation}\label{eq:Q-bounds}
	\frac{A_n}{n}\left(2-\frac{3A_n}{2n}\right)
	\le Q_n
	\le
	\frac{A_n}{n}\left(2-\frac{A_n}{n}\right)
	\le\frac{2A_n}{n}.
\end{equation}
These bounds hold for every $p\in(0,1)$.

	\smallskip
	
    \emph{Step 1.}
	Since
	$A_{n+1}=A_n+I_{n+1}$ with $I_{n+1}\in\{0,1\}$, we have $\log A_{n+1}-\log A_n=I_{n+1}\log(1+1/A_n)$; summing from $n=2$ to $n'-1$ and writing $I_{n+1}=Q_n+(I_{n+1}-Q_n)$ gives
	\begin{equation}\label{eq:log-A-decomp}
		\log A_{n'}-\log A_2 =\sum_{n=2}^{n'-1}Q_n\log\left(1+\frac1{A_n}\right)+M_{n'},
	\end{equation}
	where
	\[
		M_{n'}:=\sum_{n=2}^{n'-1}(I_{n+1}-Q_n)\log\left(1+\frac1{A_n}\right)
	\]
	is a martingale, the weight $\log(1+1/A_n)$ being $\F_n$-measurable.

	\medskip
	\emph{Step 2.}
	Since $I_{n+1}$ is conditionally Bernoulli with parameter $Q_n$,
	\[
		\E\!\left((I_{n+1}-Q_n)^2\mid\F_n\right) =Q_n(1-Q_n)\le Q_n.
	\]
	Therefore, using $\log(1+x)\le x$ for $x\ge0$, the upper bound in \eqref{eq:Q-bounds}, and $A_n\ge2$, we obtain
	\[
		\begin{aligned}
			\E\!\left((M_{n+1}-M_n)^2\mid\F_n\right)
			  \le
			Q_n\log^2\left(1+\frac1{A_n}\right) 
			  \le
			\frac{2A_n}{n}\frac1{A_n^2}         
			  \le\frac1n.
		\end{aligned}
	\]
	Hence
	\[
		\sum_{n\ge3} \frac{\E\!\left((M_{n+1}-M_n)^2\mid\F_n\right)} {(\log n)^2} \le \sum_{n\ge3}\frac1{n(\log n)^2} <\infty.
	\]
	The martingale convergence theorem \cite[Theorem~2.17]{HallHeyde1980}, applied to
	\[
		\sum_{n\ge3}\frac{M_{n+1}-M_n}{\log n},
	\]
	shows that this series converges almost surely. Kronecker's lemma therefore yields
	\begin{equation}\label{eq:M-small}
		\frac{M_n}{\log n}\longrightarrow0 	\quad\as.
	\end{equation}
    
	\smallskip
    
	\emph{Step 3.}
	Suppose that $\Pp(E)>0$. By \cref{lem:A-infinity}, $A_n\to\infty$ on $E$, so we may fix $\varepsilon>0$ and constants
	$c_1>c_0>1$ with $2-\tfrac32\varepsilon>c_1>c_0>1$. On $E$ we have $r_n\le\varepsilon$ for all sufficiently large $n$, so $Q_n\ge c_1A_n/n$ by \eqref{eq:Q-bounds}; combining this with $A_n\log(1+1/A_n)\to1$ gives, for
	all large $n$ on $E$,
	\begin{equation}\label{eq:log-drift}
		Q_n\log\left(1+\frac1{A_n}\right)\ge\frac{c_0}{n}.
	\end{equation}
	Combining \eqref{eq:M-small} and \eqref{eq:log-drift} with \eqref{eq:log-A-decomp} and using
	$\sum_{n\le n'}1/n=\log n'+O(1)$, we obtain on $E$
	\[
		\log A_{n'}\ge c_0\log n'+o(\log n'), \qquad\text{hence}\qquad \liminf_{n'\to\infty}\frac{\log A_{n'}}{\log n'}\ge c_0>1.
	\]
	This contradicts the bound $A_{n'}\le n'$. Therefore $\Pp(E)=0$. The limit $(0,0)$ is impossible, so \cref{prop:two-limits} leaves only
	\[
		(x_n,r_n)\to\left(0,\frac23\right) \quad\as,
	\]
	which is exactly \eqref{eq:lln}.
\end{proof}
The law of large numbers has two consequences for the nonzero steps. The $k$th nonzero step occurs at a time asymptotically as $3k/2$, and the position at that time is $o(k)$. We now use these facts to determine the transition
probabilities after the zero increments are removed.

\section{The embedded walk and comparison with a classical ERW}\label{sec:embedded-walk}
Most of this section concerns the case $p>1/2$, so that $\mu=2p-1>0$; the hypothesis is explicitly stated in all results where it is needed. The simpler case $\mu\le0$ will be handled directly in the recurrence proof. For $k\ge2$, define the time of the $k$th nonzero step by
\begin{equation}\label{eq:tau}
	\tau_k:=\inf\{n\ge2:A_n=k\},
\end{equation}
already used in the proof of \cref{lem:A-infinity}. By \cref{lem:A-infinity}, $\tau_k<\infty$ almost surely. We define the \emph{embedded walk} $\widehat S$ by
\begin{equation}\label{eq:embedded-walk}
	\widehat S_k:=S_{\tau_k}, \qquad k\ge2.
\end{equation}
Equivalently, $\widehat S$ is obtained from $S$ by removing all zero increments. Notice that $(k,\widehat S_k)$ is not a Markov chain: even after the zero steps are deleted, the original dynamics still samples the entire past. Nevertheless, $\widehat S_k$ is a sum of exactly $k$ increments equal to $+1$ or $-1$, so that
\[
	\widehat S_k\equiv k\pmod 2, \qquad |\widehat S_k|\le k.
\]

\subsection{Outward transitions of the embedded walk}
    The comparison with a classical ERW rests on a single quantity: the \emph{outward probability}, namely, the probability that the next nonzero step moves the walk away from the origin rather than toward it. We compute it here and state the three properties of this quantity that are used in the comparison. Fix a time $n$ and suppose that $A_n=k$ and $S_n=s\ne0$. By \eqref{eq:activity}, the probability that the next step is nonzero is
    \begin{equation}\label{eq:Q-nks}
    	Q(n,k,s):= \Pp(X_{n+1}\ne0\mid\F_n) = \frac{\Dact(n,k,s)}{2n^2}, \text{ where }\Dact(n,k,s):=4kn-3k^2+\mu^2s^2.
    \end{equation}
    Since $k=A_n\le n$, we have $\Dact(n,k,s)\ge4kn-3k^2\ge kn>0$. Thus $Q(n,k,s)>0$, and it makes sense to condition on the next step being nonzero. We claim that the probability of then moving outward, that is, of
    increasing $|S|$, equals
    \begin{equation}\label{eq:outward-immediate}
    	\psi(n,k,s) 
    	:=\Pp\bigl(X_{n+1}=\sgn(s)\bigm|\F_n,\,X_{n+1}\ne0\bigr)
    	=\frac12+\frac{\mu|s|(2n-k)}{\Dact(n,k,s)}.
    \end{equation}
    Indeed, since $\{X_{n+1}=\sgn(s)\}\subseteq\{X_{n+1}\ne0\}$,
    \[
    	\psi(n,k,s)
    	=\frac{\Pp(X_{n+1}=\sgn(s)\mid\F_n)}{\Pp(X_{n+1}\ne0\mid\F_n)}.
    \]
    By \eqref{eq:drift-S} and \eqref{eq:activity}, the sum and difference of $\Pp(X_{n+1}=1\mid\F_n)$ and $\Pp(X_{n+1}=-1\mid\F_n)$ are
    \begin{align*}
    	 & \Pp(X_{n+1}=1\mid\F_n)+\Pp(X_{n+1}=-1\mid\F_n)=\frac{\Dact(n,k,s)}{2n^2},   \\
    	 & \Pp(X_{n+1}=1\mid\F_n)-\Pp(X_{n+1}=-1\mid\F_n)=\frac{\mu s(2n-k)}{n^2}.
    \end{align*}
    Adding and subtracting,
    \[
    	\Pp(X_{n+1}=\pm1\mid\F_n) =\frac12\left(\frac{\Dact(n,k,s)}{2n^2}\pm\frac{\mu s(2n-k)}{n^2}\right).
    \]
    Dividing by $Q(n,k,s)=\Dact(n,k,s)/(2n^2)$ therefore gives
    \[
    	\frac{\Pp(X_{n+1}=\pm1\mid\F_n)}{\Pp(X_{n+1}\ne0\mid\F_n)}  =\frac12\pm\frac{\mu s(2n-k)}{\Dact(n,k,s)}.
    \]
    If $s>0$ the outward direction is $+1$ and the upper sign applies, while if $s<0$ it is $-1$ and the lower sign applies; in both cases the correction equals $\mu|s|(2n-k)/\Dact(n,k,s)$, which proves \eqref{eq:outward-immediate}.
    Note that \eqref{eq:outward-immediate} was derived without any assumption on the sign of $\mu$, and that $|s|(2n-k)/\Dact(n,k,s)>0$. Hence the conditional outward probability is larger than $1/2$ when $\mu>0$, less than $1/2$ when $\mu<0$, and equal to $1/2$ when $\mu=0$. For the rest of this section we assume $\mu>0$, that is, $p>1/2$.
    Thus, conditional on moving, the walk has a bias away from the origin. This is the quantity that will be compared with the corresponding probability for a classical ERW. The bias in \eqref{eq:outward-immediate} depends on $(n,k,s)$ only through the two ratios
    \[
    	v:=\frac nk\ (\ge1), \qquad u:=\frac sk\ (|u|\le1).
    \]
    The first measures elapsed time per nonzero step, the second the position per nonzero step. This gives
    \begin{equation}\label{eq:C-def}
    	\psi(n,k,s) =\frac12+C_p(v,u)\,\frac{|s|}{k}, \qquad C_p(v,u):=\frac{\mu(2v-1)}{4v-3+\mu^2u^2}.
    \end{equation}
    This has the same form as the outward probability of a classical ERW with memory parameter $q$, which by \eqref{eq:ERW-transition} is 
    $\tfrac12+\tfrac{(2q-1)|y|}{2k}$, with $C_p(v,u)$ playing the role of $\tfrac{2q-1}{2}$. Thus the limiting value of $C_p$ determines the memory parameter of the classical ERW with which we should compare $\widehat S$. We need only three simple facts about $C_p$. First, on the domain $\{v\ge1,\ |u|\le1\}$ the denominator satisfies
    \[
    	4v-3+\mu^2u^2\ge1>0,
    \]
    so $C_p$ is continuous there. Moreover, $\mu(2v-1)>0$ and $2v-1\le4v-3+\mu^2u^2$ because $v\ge1$; hence
    \begin{equation}\label{eq:C-bounds}
    	0<C_p(v,u)\le \mu.
    \end{equation}
    Second, at the point $(v,u)=(3/2,0)$,
    \begin{equation}\label{eq:C-limit}
    	C_p\left(\frac32,0\right)=\frac{2\mu}{3},
    \end{equation}
    which gives the effective parameter $\qeff(p)=\tfrac12+\tfrac{2\mu}{3}$ introduced in \eqref{eq:qeff} below. This is the relevant point because, by \cref{thm:lln}, at nonzero-step times the ratios $v=n/k$ and $u=s/k$
    converge to $3/2$ and $0$, respectively. Third, for each fixed $u\in[-1,1]$, the map $v\mapsto C_p(v,u)$ is strictly decreasing, since
    \[
    	\frac{\partial}{\partial v}\,\frac{2v-1}{4v-3+\mu^2u^2} =\frac{2\left(\mu^2u^2-1\right)}{\left(4v-3+\mu^2u^2\right)^2}<0.
    \]
    Its value at $v=3/2$ and its limit at infinity are
    \[
    	C_p\left(\frac32,u\right)=\frac{2\mu}{3+\mu^2u^2}, \qquad \lim_{v\to\infty}C_p(v,u)=\frac{\mu}{2}.
    \]
    In particular, $C_p(3/2,u)\to2\mu/3$ as $u\to0$. Since $C_p(v,u)$ depends on $v$, its value at the next nonzero step is affected by the random number of preceding zero increments. The following lemma shows that, after averaging over these zero increments, the coefficient still converges to $2\mu/3$.
    
\begin{lemma}\label{lem:waiting-concentration}
	Let $t\ge k\ge2$ and let $(t,k,s)$ be an admissible state, so that 	$|s|\le k$ and $s\equiv k\pmod 2$. Denote by $\Pp_{t,k,s}$ and 	$\E_{t,k,s}$ probability and expectation for the process started at time $t$ from $(A_t,S_t)=(k,s)$, and let $N$ be the time immediately before the next nonzero step. For every $p\in(0,1)$,
	\begin{equation}\label{eq:waiting-weights}
		\Pp_{t,k,s}(N=n) =w_{t,k,s}(n) :=Q(n,k,s)\prod_{j=t}^{n-1} \bigl(1-Q(j,k,s)\bigr), \qquad n\ge t,
	\end{equation}
	and the weights $w_{t,k,s}(n)$ sum to one. If $p>1/2$, define
	\begin{equation}\label{eq:G-def}
		G(t,k,s):= \sum_{n=t}^{\infty} w_{t,k,s}(n) C_p\left(\frac nk,\frac sk\right).
	\end{equation}
	If $t_k\ge k$, $|s_k|\le k$,
	\[
		\frac{t_k}{k}\longrightarrow\frac32, \qquad \frac{s_k}{k}\longrightarrow0,
	\]
	then
		\begin{equation}\label{eq:C-average-limit}
			G(t_k,k,s_k)\longrightarrow\frac{2\mu}{3}.
		\end{equation}
\end{lemma}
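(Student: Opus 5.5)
The first assertion is the elementary geometric-trial formula. By \cref{rem:markov}, as long as no nonzero step has occurred after time $t$, the state remains $(j,k,s)$. The conditional probability that step $j+1$ is nonzero is therefore $Q(j,k,s)$ from \eqref{eq:Q-nks}. Multiplying these conditional probabilities gives \eqref{eq:waiting-weights}. To see that the weights sum to one, it suffices to show that $\prod_{j=t}^{n-1}(1-Q(j,k,s))\to0$. This follows exactly as in \cref{lem:A-infinity}: $\Dact(j,k,s)\ge kj$ gives $Q(j,k,s)\ge k/(2j)$, and $\sum_j 1/j=\infty$. Since $0<C_p\le\mu$ by \eqref{eq:C-bounds}, the series defining $G$ converges absolutely.

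For \eqref{eq:C-average-limit}, I will show that the waiting time $N-t_k$ is $o(k)$ in probability under the weights $w_{t_k,k,s_k}$. This means that $N/k\to3/2$ in probability, after which continuity of $C_p$ does the rest. Fix $\delta>0$ and set $m_k:=t_k+\lfloor\delta k\rfloor$. For $j\in[t_k,m_k)$ we have $j\le(3/2+2\delta)k$ for large $k$, so $kj\le k^2(3/2+2\delta)$. The lower bound then reads
\[
Q(j,k,s_k)\ge\frac{4kj-3k^2}{2j^2}\ge\frac{4kj-3k^2}{2(3/2+2\delta)^2k^2}\ge c>0,
\]
since $4kj-3k^2\ge3k^2$ when $j\ge t_k\approx 3k/2$. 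Hence
\[
\Pp(N\ge m_k)=\prod_{j=t_k}^{m_k-1}(1-Q(j,k,s_k))\le(1-c)^{\lfloor\delta k\rfloor}\to0.
\]
In fact, the uniform lower bound on $Q$ holds for $j\in[t_k,Kk]$ with any fixed $K$. So $N-t_k$ is stochastically dominated by a geometric variable with parameter $c$, which gives the stronger statement $N-t_k=O_P(1)$; either form is enough.

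To conclude, write
\[
G-\frac{2\mu}{3}=\sum_n w(n)\Bigl(C_p(n/k,s_k/k)-C_p(3/2,0)\Bigr).
\]
Split the sum at $m_k$. On $n<m_k$ we have $n/k\in[t_k/k,\,t_k/k+\delta]$ and $|s_k/k|\to0$. Uniform continuity of $C_p$ on the compact set $[1,3]\times[-1,1]$ bounds these terms by $\omega(\delta+o(1))$, where $\omega$ is a modulus of continuity. For the tail $n\ge m_k$, the bound $|C_p-2\mu/3|\le 2\mu$ gives at most $2\mu\,\Pp(N\ge m_k)\to0$. Letting $k\to\infty$ and then $\delta\to0$ yields \eqref{eq:C-average-limit}.

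The only step requiring care is the uniform lower bound on $Q$ near $j\approx 3k/2$. It is the quantitative fact that, when $r=k/j\approx2/3$, the activity probability $Q$ stays of order one, roughly $r(2-\tfrac32r)\ge 2/3$, rather than decaying like $k/j$. I expect no real obstacle beyond keeping the $\delta$-bookkeeping uniform in $s_k$; this uniformity follows because $\mu^2s^2\ge0$ only helps the lower bound.
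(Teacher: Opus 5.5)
Your proof is correct and follows essentially the paper's route: the hazard $Q$ is bounded below by a constant on the window $[t_k,t_k+\delta k]$, which forces the waiting time to be $o(k)$ with high probability, and continuity plus boundedness of $C_p$ then give the limit (the paper states the same estimate as $\varrho(2-\tfrac32\varrho)\ge\tfrac12$ for $\varrho=k/n\in[1/3,1]$). One small slip: $4kj-3k^2\ge3k^2$ requires $j\ge3k/2$, which $t_k$ need not satisfy, but $j\ge k$ already gives $4kj-3k^2\ge k^2$, and that is enough for $c>0$.
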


\begin{proof}
	For $n\ge t$, the event $\{N=n\}$ means that
	\[
		X_{j+1}=0\quad\text{for }t\le j<n, \qquad X_{n+1}\ne0.
	\]
	During these zero steps, $(A_j,S_j)=(k,s)$. Multiplying the corresponding conditional probabilities gives
	\[
		\Pp_{t,k,s}(N=n) =Q(n,k,s)\prod_{j=t}^{n-1} \bigl(1-Q(j,k,s)\bigr),
	\]
	which proves \eqref{eq:waiting-weights}.

	The weights sum to one because
	\[
		\sum_{n\ge t}w_{t,k,s}(n) =1-\prod_{j\ge t}\bigl(1-Q(j,k,s)\bigr).
	\]
	Indeed, since $n\ge t\ge k$,
	\begin{equation}\label{eq:hazard-lower}
		Q(n,k,s) \ge\frac{4kn-3k^2}{2n^2} \ge\frac{k}{2n}.
	\end{equation}
	Hence $\sum_{n\ge t}Q(n,k,s)=\infty$, and the infinite product vanishes.

	To prove the final assertion, fix sequences $(t_k)$ and $(s_k)$ satisfying the assumptions of the lemma. By
	\eqref{eq:waiting-weights},
	\[
		G(t_k,k,s_k) =\E_{t_k,k,s_k}\left[ C_p\left(\frac Nk,\frac{s_k}{k}\right) \right].
	\]
	Since $s_k/k\to0$ and $C_p$ is continuous and bounded, it is enough to prove that
	\[
		\frac Nk\longrightarrow\frac32 \qquad\text{in probability}.
	\]

	Fix $\delta\in(0,1]$ and set $m_k:=\lfloor\delta k\rfloor$. Since $t_k/k\to3/2$, we have $t_k\le2k$ for all sufficiently large $k$. Thus, for $t_k\le n\le t_k+m_k$,
	\[
		\frac13\le\frac{k}{n}\le1.
	\]
	Writing $\varrho=k/n$, \eqref{eq:hazard-lower} gives
	\[
		Q(n,k,s_k) \ge\varrho\left(2-\frac32\varrho\right) \ge\frac12.
	\]
	Therefore,
    \[
        \Pp_{t_k,k,s_k}(N>t_k+m_k) = \prod_{j=t_k}^{t_k+m_k} \bigl(1-Q(j,k,s_k)\bigr) \le 2^{-(m_k+1)} \longrightarrow0.
    \]
	On the complementary event,
	\[
		\frac{t_k}{k} \le\frac Nk \le\frac{t_k}{k}+\delta.
	\]
	Since $t_k/k\to3/2$ and $\delta>0$ is arbitrary, this proves that $N/k\to3/2$ in probability. Consequently,
	\[
		C_p\left(\frac Nk,\frac{s_k}{k}\right) \longrightarrow 	C_p\left(\frac32,0\right) =\frac{2\mu}{3}
	\]
	in probability. The bound \eqref{eq:C-bounds} then gives convergence of the expectations, proving \eqref{eq:C-average-limit}.
\end{proof}

We now apply \cref{lem:waiting-concentration} to the embedded walk. The first result identifies the asymptotic coefficient in its outward transition probability.

\begin{proposition}\label{prop:embedded-outward}
	Let $p>1/2$ and assume that
	\begin{equation}\label{eq:activity-lln-assumption}
		\frac{S_n}{n}\to0, 	\qquad 	\frac{A_n}{n}\to\frac23 \quad\as.
	\end{equation}
	Let $\G_k:=\F_{\tau_k}$ and define
	\begin{equation}\label{eq:Bk-explicit}
		B_k := 	G\bigl(\tau_k,k,\widehat S_k\bigr) 	= \sum_{n=\tau_k}^{\infty} w_{\tau_k,k,\widehat S_k}(n)\, C_p\left(\frac nk,\frac{\widehat S_k}{k}\right).
	\end{equation}
	Then $B_k$ is $\G_k$-measurable and
	\begin{equation}\label{eq:Bk-limit}
		B_k\longrightarrow\frac{2\mu}{3} \quad\as.
	\end{equation}
	Moreover, on $\{\widehat S_k\ne0\}$,
	\begin{equation}\label{eq:Bk-def}
		\Pp\left(|\widehat S_{k+1}|=|\widehat S_k|+1 \,\middle|\, \G_k \right) = \frac12+B_k\frac{|\widehat S_k|}{k}.
	\end{equation}
	If $\widehat S_k=0$, then
	$|\widehat S_{k+1}|=1$ almost surely.
\end{proposition}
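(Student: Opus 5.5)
The plan is to reduce everything to the strong Markov property at $\tau_k$ combined with \cref{lem:waiting-concentration}. Measurability comes first. $B_k$ is a deterministic function $G(\tau_k,k,\widehat S_k)$ of the $\G_k$-measurable pair $(\tau_k,\widehat S_k)$; $\tau_k$ is a stopping time and $S_{\tau_k}$ is $\F_{\tau_k}$-measurable. The function $G$ is well defined by \eqref{eq:G-def}, since the weights are a probability distribution and $C_p$ is bounded by \eqref{eq:C-bounds}.

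Next comes the transition identity \eqref{eq:Bk-def}. By \cref{rem:markov} and the strong Markov property at $\tau_k$, conditionally on $\G_k$ the process after $\tau_k$ is distributed as the chain started from the state $(\tau_k,k,\widehat S_k)$. The state $(k,\widehat S_k)$ is frozen until the next nonzero step, and $\widehat S_{k+1}=\widehat S_k+X_{N+1}$, where $N$ is the time immediately before that step. Decompose according to $\{N=n\}$. Given $\F_n$ on $\{N\ge n\}$, with state $(k,s)$, the probability that $X_{n+1}=\sgn(s)$ equals $Q(n,k,s)\psi(n,k,s)$, while the event $\{N=n\}$ has probability $Q(n,k,s)$ times the survival product. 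Summing over $n\ge\tau_k$ gives
\[
\sum_n w_{\tau_k,k,s}(n)\,\psi(n,k,s)=\frac12\sum_n w(n)+\frac{|s|}{k}\sum_n w(n)\,C_p\!\left(\frac nk,\frac sk\right),
\]
by \eqref{eq:C-def}. Since the weights sum to one by \cref{lem:waiting-concentration}, this is $\tfrac12+B_k|s|/k$. If $\widehat S_k=0$, any $\pm1$ step gives $|\widehat S_{k+1}|=1$; this needs no argument.

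Finally, the a.s.\ limit \eqref{eq:Bk-limit}. By \cref{lem:A-infinity}, $\tau_k<\infty$ and $\tau_k\to\infty$. Evaluating \eqref{eq:activity-lln-assumption} along $n=\tau_k$, where $A_{\tau_k}=k$, gives $\tau_k/k\to3/2$ and $\widehat S_k/k=(S_{\tau_k}/\tau_k)(\tau_k/k)\to0$ almost surely. Fix $\omega$ in this full-measure event and set $t_k:=\tau_k(\omega)\ge k$ and $s_k:=\widehat S_k(\omega)$, with $|s_k|\le k$. These are deterministic sequences satisfying the hypotheses of \cref{lem:waiting-concentration}, so $G(t_k,k,s_k)\to2\mu/3$, which is the claim.

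The main obstacle I anticipate is the careful justification of the conditioning in the second step. One has to pass from one-step conditional probabilities to the conditional law given $\G_k$ of the first nonzero step after a stopping time, including summing over the possibly unbounded waiting time. I would handle this by computing $\Pp(N=n,\,X_{n+1}=\sgn(\widehat S_k)\mid\G_k)$ via iterated conditioning on $\F_n$ on the $\F_n$-measurable event $\{\tau_k\le n,\ N\ge n\}$. The pieces are then summed by monotone convergence, using that $N<\infty$ a.s.
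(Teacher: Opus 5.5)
Your proposal is correct and follows essentially the same route as the paper: the strong Markov property at $\tau_k$, a decomposition over the waiting time $N$ using the weights $w_{\tau_k,k,\widehat S_k}(n)$ from \cref{lem:waiting-concentration} together with the representation \eqref{eq:C-def}, and then a pathwise application of the lemma's final assertion using $\tau_k/k\to3/2$ and $\widehat S_k/k\to0$. Your explicit iterated-conditioning argument on $\{\tau_k\le n,\ N\ge n\}$ only spells out a step that the paper takes directly from the strong Markov property.
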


\begin{proof}
	Fix $k\ge2$ and condition on $\G_k$. Write
	\[
		t:=\tau_k, \qquad s:=\widehat S_k.
	\]
	By the strong Markov property at $\tau_k$, the conditional law of the process after time $t$ depends only on $(t,k,s)$. Since $t\ge k$, 	\cref{lem:waiting-concentration} gives
	\[
		\Pp(N=n\mid\G_k) =  w_{t,k,s}(n), \qquad n\ge t,
	\]
	where $N+1$ is the time of the next nonzero step.

	Since $\tau_k$ and $\widehat S_k$ are $\G_k$-measurable and $G$ is deterministic, $B_k=G(\tau_k,k,\widehat S_k)$ is $\G_k$-measurable.

	Suppose that $s\ne0$. On $\{N=n\}$, all increments between times $t+1$ and $n$ are zero, so that
	\[
		A_n=k, \qquad S_n=s.
	\]
	By \eqref{eq:C-def},
	\[
		\Pp\left(|\widehat S_{k+1}|=|s|+1 \,\middle|\, \G_k,N=n \right) = \frac12+ C_p\left(\frac nk,\frac sk\right)\frac{|s|}{k}.
	\]
	Averaging over $N$ and using $\sum_{n\ge t}w_{t,k,s}(n)=1$, we obtain
	\begin{align*}
		\Pp\left(|\widehat S_{k+1}|=|s|+1 \,\middle|\, \G_k \right)
		&= \sum_{n=t}^{\infty} w_{t,k,s}(n) \left[ \frac12+ C_p\left(\frac nk,\frac sk\right)\frac{|s|}{k} \right] \\
		&= 	\frac12+ G(t,k,s)\frac{|s|}{k} \\
		&= \frac12+B_k\frac{|s|}{k}.
	\end{align*}
	This proves \eqref{eq:Bk-def}.

	It remains to determine the limit of $B_k$. Since $A_{\tau_k}=k$ and $\tau_k\to\infty$, the assumed law of large numbers gives
	\[
		\frac{k}{\tau_k} = 	\frac{A_{\tau_k}}{\tau_k} \longrightarrow\frac23 \qquad\as,
	\]
	and hence
	\[
		\frac{\tau_k}{k}\longrightarrow\frac32 \qquad\as.
	\]
	Similarly,
	\[
		\frac{\widehat S_k}{k} 	=  \frac{S_{\tau_k}/\tau_k}{A_{\tau_k}/\tau_k} \longrightarrow0 \qquad\as.
	\]
	Applying the final assertion of \cref{lem:waiting-concentration} pathwise with $t_k=\tau_k$ and $s_k=\widehat S_k$, we obtain
	\[
		B_k = G(\tau_k,k,\widehat S_k) \longrightarrow\frac{2\mu}{3} \qquad\as.
	\]
	This proves \eqref{eq:Bk-limit}.

	Finally, if $\widehat S_k=0$, the next nonzero increment is either $1$ or $-1$, and therefore $|\widehat S_{k+1}|=1$ almost surely.
\end{proof}

    By  \eqref{eq:Bk-limit} and \eqref{eq:Bk-def}, the outward transition probability of the embedded walk therefore satisfies, on $\{\widehat S_k\ne0\}$,
    \begin{equation}\label{eq:embedded-out-asymptotic}
    	\Pp\left( |\widehat S_{k+1}|=|\widehat S_k|+1 \,\middle|\, \G_k \right) = \frac12+ \left(\frac{2\mu}{3}+o(1)\right) \frac{|\widehat S_k|}{k} \qquad\as.
    \end{equation}

    For comparison, if $(E_k)$ is a classical ERW with memory parameter $q$, then, conditionally on $E_k=y\ne0$,
    \begin{equation}\label{eq:ERW-out}
    	\Pp(|E_{k+1}|=|y|+1\mid E_k=y) = \frac12+\frac{(2q-1)|y|}{2k}.
    \end{equation}
    Comparing \eqref{eq:embedded-out-asymptotic} with \eqref{eq:ERW-out} shows that the two outward probabilities have the same asymptotic coefficient when
    \[
    	\frac{2q-1}{2}=\frac{2\mu}{3}.
    \]
    This motivates the definition
    \begin{equation}\label{eq:qeff}
    	\qeff(p) := \frac12+\frac{2\mu}{3}   = \frac{8p-1}{6},
    \end{equation}
    so that
    \begin{equation}\label{eq:coeff-match}
    	\frac{2\qeff(p)-1}{2} = \frac{2\mu}{3}.
    \end{equation}
    For $1/2<p\le7/8$, one has $\qeff(p)\in(1/2,1]$, and hence $\qeff(p)$ is a valid memory parameter for a classical ERW. For $p>7/8$, one has $\qeff(p)>1$; in that regime the quantity $\qeff(p)$ is only a convenient parametrization of the limiting outward coefficient.

The convergence of $B_k$ now gives upper and lower comparisons with classical ERWs whose memory parameters lie on either side of $\qeff(p)$.

\begin{proposition}\label{prop:embedded-comparison}
	Under the assumptions of \cref{prop:embedded-outward}, the following statements hold.
\begin{itemize}
    \item If $q_-\in(1/2,1)$ and $q_-<\qeff(p)$, then almost surely there exists a finite random $K_-$ such that, for every $k\ge K_-$ with $\widehat S_k\ne0$,
	\begin{equation}\label{eq:embedded-lower-prob}
		\Pp\left( |\widehat S_{k+1}|=|\widehat S_k|+1 \,\middle|\, 	\G_k \right) \ge \frac12+ \frac{(2q_--1)|\widehat S_k|}{2k}.
	\end{equation}
    \item If $\qeff(p)<1$ and $q_+\in(\qeff(p),1)$, then almost surely there exists a finite random $K_+$ such that, for every $k\ge K_+$ with $\widehat S_k\ne0$,
	\begin{equation}\label{eq:embedded-upper-prob}
		\Pp\left( |\widehat S_{k+1}|=|\widehat S_k|+1 \,\middle|\, \G_k \right) \le \frac12+ \frac{(2q_+-1)|\widehat S_k|}{2k}.
	\end{equation}
\end{itemize}
\end{proposition}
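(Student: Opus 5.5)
The plan is to derive both inequalities directly from the exact formula \eqref{eq:Bk-def} of \cref{prop:embedded-outward}, together with the almost-sure convergence \eqref{eq:Bk-limit} and the coefficient identity \eqref{eq:coeff-match}. No coupling is needed at this stage: the statement is a pathwise comparison of conditional probabilities. The only issue is choosing the random thresholds $K_\pm$ as the times after which $B_k$ stays on the correct side of the comparison coefficient.

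\emph{Lower bound.} Fix $q_-\in(1/2,1)$ with $q_-<\qeff(p)$, and set $\beta_-:=(2q_--1)/2$. By \eqref{eq:coeff-match}, $\beta_-<(2\qeff(p)-1)/2=2\mu/3$, so $\varepsilon_-:=2\mu/3-\beta_->0$. Let $\Omega_0$ be the almost-sure event on which \eqref{eq:Bk-limit} holds. On $\Omega_0$, define
\[
K_-:=\inf\{K\ge2: |B_k-2\mu/3|<\varepsilon_-\ \text{for all }k\ge K\},
\]
which is finite. Then for $k\ge K_-$ we have $B_k>\beta_-$. On $\{\widehat S_k\ne0\}$, \eqref{eq:Bk-def} gives
\[
\Pp(|\widehat S_{k+1}|=|\widehat S_k|+1\mid\G_k)=\tfrac12+B_k|\widehat S_k|/k\ge\tfrac12+\beta_-|\widehat S_k|/k,
\]
because $|\widehat S_k|/k\ge0$. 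This is exactly \eqref{eq:embedded-lower-prob}.

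\emph{Upper bound.} Now assume $\qeff(p)<1$ and $q_+\in(\qeff(p),1)$. Set $\beta_+:=(2q_+-1)/2>2\mu/3$ and $\varepsilon_+:=\beta_+-2\mu/3$. Define $K_+$ in the same way, using $\varepsilon_+$, so that $B_k<\beta_+$ for $k\ge K_+$. Then \eqref{eq:Bk-def} yields \eqref{eq:embedded-upper-prob} in the same manner.

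The only point requiring care is measure-theoretic: the conditional probability in \eqref{eq:Bk-def} is defined only up to null sets. I will fix the regular version given by the explicit expression $\tfrac12+B_k|\widehat S_k|/k$, which is legitimate because $B_k$ is $\G_k$-measurable by \cref{prop:embedded-outward}. I will also note that $K_\pm$ are random variables but not stopping times. The statement only asks for existence of a finite random index, so this is enough. If a later coupling needs stopping times, one can instead use the events $\{B_j>\beta_-\ \forall j\ge K\}$ for deterministic $K$ and let $K\to\infty$.

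The restriction $q_\pm\in(1/2,1)$ plays no role in the inequalities themselves. It only ensures that the comparison parameter is a valid classical ERW parameter for later use. So there is no real obstacle: the main step is simply the strict separation $\beta_-<2\mu/3<\beta_+$ obtained from \eqref{eq:coeff-match}.
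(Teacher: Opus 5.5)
Your proposal is correct and matches the paper's proof: use \eqref{eq:Bk-limit} and the strict separation $(2q_--1)/2<2\mu/3<(2q_+-1)/2$ from \eqref{eq:coeff-match} to find a finite random index after which $B_k$ stays on the right side, then substitute into \eqref{eq:Bk-def}. Your remark about fixing the $\G_k$-measurable version $\tfrac12+B_k|\widehat S_k|/k$ of the conditional probability is a harmless, slightly more careful addition.
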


\begin{proof}
	If $q_-<\qeff(p)$, then by \eqref{eq:coeff-match},
	\[
		\frac{2q_--1}{2} < \frac{2\qeff(p)-1}{2} = \frac{2\mu}{3}.
	\]
	Since $B_k\to2\mu/3$ almost surely, there is almost surely a finite random $K_-$ such that
	\[
		B_k\ge\frac{2q_--1}{2}, \qquad k\ge K_-.
	\]
	Combining this with \eqref{eq:Bk-def} gives \eqref{eq:embedded-lower-prob}.

	Similarly, if $q_+>\qeff(p)$, then
	\[
		\frac{2\mu}{3} = \frac{2\qeff(p)-1}{2} 	< \frac{2q_+-1}{2}.
	\]
	Hence, almost surely,
	\[
		B_k\le\frac{2q_+-1}{2}
	\]
	for all sufficiently large $k$. Together with \eqref{eq:Bk-def}, this proves \eqref{eq:embedded-upper-prob}.
\end{proof}

\subsection{Coupling with classical ERWs}
We now give the coupling argument used to compare the absolute value of the embedded walk with that of a classical ERW. The argument is adapted from Qin's monotone coupling \cite[Proposition~1.9]{Qin2025}.

\begin{proposition}\label{prop:radial-coupling}
	Let $(R_k)_{k\ge K}$ be a nonnegative process adapted to $(\mathcal H_k)_{k\ge K}$ such that
	\(R_k\equiv k\pmod 2,\) $R_{k+1}=1$ whenever $R_k=0$, and otherwise $R_{k+1}=R_k\pm1$. On $\{R_k>0\}$, fix a version
	\[
		p_k := \Pp(R_{k+1}=R_k+1\mid\mathcal H_k).
	\]

	\begin{enumerate}[label=\textup{(\roman*)}]
		\item \emph{Lower comparison.} 
		Fix $q_{-}\in(1/2,1)$ and define
		\begin{equation}\label{eq:good-tail-event}
			\mathcal B_K^-(q_{-} ) := \bigcap_{k\ge K} \left( \{R_k=0\} \cup \left\{ p_k \ge \frac12+\frac{(2q_{-}-1)R_k}{2k} \right\} \right).
		\end{equation}
		On an extension of the probability space, one can construct a classical ERW with memory parameter $q_{-}$,
		denoted by $(E_k^{-,K})_{k\ge K}$, started from a deterministic history of $K$ increments, such that on $\mathcal B_K^-(q_-)$,
		\begin{equation}\label{eq:domination-on-good-tail}
			|E_k^{-,K}|\le R_k, \qquad k\ge K.
		\end{equation}

		\item \emph{Upper comparison.}
		Assume $R_K\le K$ and fix $q_+\in[1/2,1)$. Define
		\begin{equation}\label{eq:upper-tail-event}
			\mathcal B_K^{+}(q_+) := \bigcap_{k\ge K} \left( \{R_k=0\} \cup \left\{p_k \le \frac12+\frac{(2q_+-1)R_k}{2k} \right\} \right).
		\end{equation}
		Let $E^{+,K}$ be a classical ERW with memory parameter $q_+$ started from the all-positive history of length $K$. On an extension of the probability space, it can be coupled with $R$ so that, on $\mathcal B_K^{+}(q_+)$,
		\begin{equation}\label{eq:upper-domination}
			R_k\le |E_k^{+,K}|, \qquad k\ge K.
		\end{equation}
	\end{enumerate}

	In both cases the extension is obtained by adjoining auxiliary randomness independent of the original process. In particular, the law of $(R_k)_{k\ge K}$ is unchanged.
\end{proposition}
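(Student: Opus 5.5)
The plan is to build the comparison walk step by step from independent uniform variables, and to show by induction on $k$ that the ordering is preserved. First we note that for a classical ERW $E$ the process $(|E_k|)$ is itself Markov in $(k,|E_k|)$. By \eqref{eq:ERW-transition}, the outward probability from $|E_k|=y>0$ is $\tfrac12+\tfrac{(2q-1)y}{2k}$, and from $y=0$ the walk moves to $1$. Hence we only need to construct the absolute value $|E^{\pm,K}_k|$ as a Markov chain with these transitions. A full ERW with the same modulus is then recovered by choosing the signs, and the increments are reconstructed conditionally on the modulus path using further independent randomness. The same device works for the initial deterministic history of length $K$. For part (i) any history is allowed, and we take one whose final position has modulus $\le R_K$, for instance an alternating history with $|E_K|\in\{0,1\}$ of the correct parity. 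For part (ii) we take the all-positive history, so $|E_K^{+,K}|=K\ge R_K$. Parity matches because both $R_k$ and $|E_k|$ are $\equiv k\pmod 2$.

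Next we adjoin an i.i.d.\ sequence $(V_k)_{k\ge K}$ of $\operatorname{Unif}(0,1)$ variables, independent of everything else, and realize the step of $R$ through them. Concretely, we enlarge the space so that $R_{k+1}=R_k+1$ exactly when $V_k\le p_k$. This is possible by the standard randomization: given $\mathcal H_k$ and the actual step of $R$, we sample $V_k$ uniformly from $(0,p_k]$ or from $(p_k,1)$ according to the step. The resulting $V_k$ is uniform and independent of $\mathcal H_k$, and the law of $R$ is unchanged. We then let $E$ make its outward move exactly when $V_k\le \tfrac12+\tfrac{(2q-1)|E_k|}{2k}$, or always when $|E_k|=0$. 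Since $V_k$ is independent of the past of both processes, $|E|$ has the correct Markov transitions.

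\emph{Induction for (i).} Suppose $|E_k|\le R_k$. Because the parities agree, either $|E_k|\le R_k-2$ or $|E_k|=R_k$. In the first case $|E_{k+1}|\le|E_k|+1\le R_k-1\le R_{k+1}$. In the second case, if $R_k=0$ then both moduli move to $1$. If $R_k>0$, then on $\mathcal B_K^-$ we have $p_k\ge \tfrac12+\tfrac{(2q_--1)R_k}{2k}$, which is the outward threshold for $E$. So whenever $E$ steps outward, $V_k$ lies below $p_k$, $R$ steps outward too, and the order is kept. Part (ii) is symmetric, starting from $R_K\le K=|E_K|$ and using the reverse inequality on $\mathcal B_K^+$. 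In the equal case the outward step of $R$ forces the outward step of $E$. When $R_k=0=|E_k|$, both moduli move to $1$.

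The main obstacle is the measure-theoretic construction of $V_k$. We need $V_k$ to be independent of $\mathcal H_k$ and of the earlier $V_j$, and we need $E$ to be a genuine ERW with memory $q$ on the whole space, not only on $\mathcal B$. This holds because the rule for $E$ uses only $V_k$ and $|E_k|$, and never $p_k$. The event $\mathcal B$ enters only in the pathwise order argument. The extension does not change the law of $R$ because $V_k$ is drawn from its conditional law given the step of $R$.
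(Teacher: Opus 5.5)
Your proposal is correct and is essentially the paper's proof. The paper likewise realizes the step of $R$ through a conditionally uniform variable ($p_kV_{k+1}$ after an outward step, $p_k+(1-p_k)V_{k+1}$ after an inward one, and a fresh uniform when $R_k=0$, a case you should state explicitly). It then moves $|E|$ outward exactly when this variable falls below $\tfrac12+\tfrac{(2q-1)|E_k|}{2k}$, uses a fair coin at $0$, and closes with the same parity-based induction, so your detour through the modulus chain plus sign choices is harmless but unnecessary, since the position path already determines the increments.
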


\begin{proof}
	On $\{R_k>0\}$, write
	\[
		O_{k+1} := \1_{\{R_{k+1}=R_k+1\}},
	\]
	so that
	\[
		\Pp(O_{k+1}=1\mid\mathcal H_k)=p_k.
	\]

   Let $(V_{k+1})_{k\ge K}$ be independent $\operatorname{Unif}(0,1)$ random variables, independent of the
original process. Define
\[
    U_{k+1}
    :=
    \begin{cases}
        p_kV_{k+1},
            & R_k>0,\ O_{k+1}=1,\\
        p_k+(1-p_k)V_{k+1},
            & R_k>0,\ O_{k+1}=0,\\
        V_{k+1},
            & R_k=0.
    \end{cases}
\]
We work on a probability-space extension carrying these auxiliary variables, and let $\widetilde{\mathcal H}_k$ denote the enlarged past up to time $k$. Since the auxiliary randomness is independent of the original process,
\[
    \Pp(O_{k+1}=1\mid\widetilde{\mathcal H}_k)=p_k.
\]
It follows that, conditionally on $\widetilde{\mathcal H}_k$, $U_{k+1}$ is uniform on $(0,1)$ and, on $\{R_k>0\}$,
\begin{equation}\label{eq:uniform-outward}
    O_{k+1} = \1_{\{U_{k+1}\le p_k\}} \qquad\text{a.s.}
\end{equation}

	\smallskip
	\emph{Lower comparison.}
	Choose a deterministic history of $K$ increments whose endpoint has the smallest possible absolute value:
	\[
		|E_K^{-,K}|
		=
		\begin{cases}
			0, & K\ \text{even},\\
			1, & K\ \text{odd}.
		\end{cases}
	\]
	Since $R_K\equiv K\pmod2$ and $R_K\ge0$,
	\[
		|E_K^{-,K}|\le R_K.
	\]

	Given $E_k^{-,K}\ne0$, use $U_{k+1}$ to move its absolute value outward whenever
	\[
		U_{k+1} \le \pi_k^-(E_k^{-,K}), \qquad \pi_k^-(e) := \frac12+\frac{(2q_{-}-1)|e|}{2k},
	\]
	and inward otherwise. If $E_k^{-,K}=0$, use an independent fair coin to choose the next sign.

	This gives a classical ERW with memory parameter $q_{-}$. Indeed, by \eqref{eq:ERW-transition}, if $E_k^{-,K}=e\ne0$ then the walk moves outward with probability
	\[
		\frac12+\frac{(2q_{-}-1)|e|}{2k} =\pi_k^-(e).
	\]

	We now prove \eqref{eq:domination-on-good-tail} by induction. Suppose
	\[
		|E_k^{-,K}|\le R_k.
	\]
	Since both quantities have the parity of $k$, either
	\[
		R_k-|E_k^{-,K}|\ge2
	\]
	or
	\[
		R_k=|E_k^{-,K}|.
	\]
	In the first case the order cannot be reversed in one step, since both absolute values change by at most one.

	Suppose therefore that
	\[
		R_k=|E_k^{-,K}|>0.
	\]
	On $\mathcal B_K^-(q_{-})$,
	\[
		p_k \ge \frac12+\frac{(2q_--1)R_k}{2k} 	= \pi_k^-(E_k^{-,K}).
	\]
	Thus, by \eqref{eq:uniform-outward}, whenever $|E^{-,K}|$ moves outward, $R$ also moves outward. Hence, the order cannot be reversed. If both values are zero, both absolute values equal one at the next time. Induction therefore yields
	\[
		|E_k^{-,K}|\le R_k, \qquad k\ge K,
	\]
	on $\mathcal B_K^-(q_{-})$.

	\smallskip
		\emph{Upper comparison.}
	Start $E^{+,K}$ from the all-positive history, so that $|E_K^{+,K}|=K\ge R_K$, and update it with the same variables $U_{k+1}$; its outward probability at time $k$ is
	\[
		\pi_k^+ := \frac12+\frac{(2q_+-1)|E_k^{+,K}|}{2k}.
	\]
	The induction is the one above with the two processes interchanged. Assume $R_k\le|E_k^{+,K}|$. A strict inequality is a gap of at least two by parity and cannot be reversed in one step; if both values are zero, both equal one at the next step. If $R_k=|E_k^{+,K}|>0$, then on $\mathcal B_K^{+}(q_+)$ we have $p_k\le\pi_k^+$, so an outward move of $R$ forces an outward move of $|E^{+,K}|$, and the order is again preserved. Hence
	\[
		R_k\le |E_k^{+,K}|, \qquad k\ge K,
	\]
	on $\mathcal B_K^{+}(q_+)$.

	Finally, the auxiliary variables are independent of the original process, so passing to the enlarged probability space does not change the law of $(R_k)_{k\ge K}$.
\end{proof}

The comparison also requires the classical superdiffusive limit for the ERW.

\begin{proposition}
	\label{prop:erw-escape}
	Let $q\in(3/4,1)$ and consider a classical ERW $(E_n)$ started from any deterministic initial history of $K\ge1$ increments. Then there exists a finite random variable $L$ such that
	\begin{equation}\label{eq:ERW-limit}
		\frac{E_n}{n^{2q-1}} \longrightarrow L \quad\as, \qquad \Pp(L=0)=0.
	\end{equation}
	In particular,
	\[
		|E_n|\longrightarrow\infty \quad\as.
	\]
\end{proposition}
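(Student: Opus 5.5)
The statement is classical; the paper itself says it is taken from \cite{Qin2025}. My plan is the standard martingale argument for the superdiffusive ERW, adapted to an arbitrary deterministic initial history of length $K$.

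\emph{Step 1: the normalized martingale.} By \eqref{eq:ERW-transition}, for $n\ge K$,
\[
\E(E_{n+1}\mid\F_n)=\Bigl(1+\frac{a}{n}\Bigr)E_n,\qquad a:=2q-1\in(1/2,1).
\]
Set $\gamma_n:=\prod_{j=K}^{n-1}(1+a/j)$. Then $\gamma_n\sim c_K\,n^{a}$ for a deterministic constant $c_K>0$, and $M_n:=E_n/\gamma_n$ is a martingale. Its increments satisfy $|M_{n+1}-M_n|\le 2/\gamma_{n+1}$, so
\[
\sum_n \E\bigl((M_{n+1}-M_n)^2\bigr)\le\sum_n \frac{4}{\gamma_{n+1}^2}\lesssim\sum_n n^{-2a}<\infty,
\]
because $a>1/2$; this is exactly where $q>3/4$ enters. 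Hence $M_n$ is bounded in $L^2$ and converges almost surely and in $L^2$. This gives $E_n/n^{a}\to L:=c_K^{-1}M_\infty$ almost surely, with $L$ finite.

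\emph{Step 2: nondegeneracy, $\Pp(L=0)=0$.} This is the main obstacle, since it is a statement about absolute continuity of the limit rather than about moments. I would not reprove it from scratch. Instead I would reduce it to the known result for the standard ERW: the limit has a density on $\R$ (\cite{GuerinLaulinRaschel}, and also \cite{Qin2025}).

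The reduction uses the Markov property of $(n,E_n)$. For each fixed $m$ and each admissible state $(m,e)$, I would show that the law of the limit, conditional on $E_m=e$, has no atom at $0$. Starting from a symmetric random first step, the standard ERW reaches any admissible state $(K,e)$ with positive probability. On that event, its future evolution is the ERW from a history with endpoint $e$. Its limit is then a positive constant multiple of the limit of the walk started at $(K,e)$; the constant is the ratio of the normalizing products. If the walk started at $(K,e)$ had an atom at $0$, the standard ERW limit would therefore have an atom at $0$, contradicting the existence of a density. Since the transition law depends on the initial history only through $(K,E_K)$, this settles every deterministic initial history.

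As a fallback in case citing the density result is not acceptable, I would use a direct argument. By Lévy's 0--1 law, $\Pp(L=0\mid\F_n)\to\1_{\{L=0\}}$. Combined with a uniform lower bound $\Pp_{n,e}(L\ne0)\ge c>0$ along the relevant states, this would give $\Pp(L=0)=0$. One route to such a lower bound is the $L^2$ estimate $\E\bigl((M_\infty-M_n)^2\mid\F_n\bigr)\lesssim n^{1-2a}$: on $\{|E_n|\ge n^{1/2+\varepsilon}\}$ it yields $M_\infty\ne0$ with high probability. The difficulty is that this only helps after the walk has already escaped, which is why I would prefer to cite the density result.

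\emph{Step 3: escape to infinity.} Since $L\ne0$ almost surely and $a>0$, we get $|E_n|\sim|L|\,n^{a}\to\infty$ almost surely.
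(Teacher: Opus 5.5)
Your proposal is correct and follows essentially the paper's route. The paper cites the classical superdiffusive convergence, which your Step~1 reproves through the $L^2$-bounded martingale $E_n/\gamma_n$; note the limit is $c_K M_\infty$ rather than $c_K^{-1}M_\infty$, a harmless slip. It then cites nondegeneracy for the standard ERW (\cite[Proposition~1.5]{Qin2025}, where you use the density result instead) and transfers it to an arbitrary deterministic history by conditioning on that positive-probability history, exactly as in your Step~2; your L\'evy 0--1 fallback is not needed.
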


Under the standard initial condition, the almost-sure convergence is the classical superdiffusive limit, while the nondegeneracy of the limit follows from \cite[Proposition~1.5]{Qin2025}. The extension to a deterministic finite
initial history follows by conditioning on that history, since the transition probabilities after time $K$ depend on the past only through $(K,E_K)$.

We now combine this result with the lower comparison to prove transience and a polynomial lower bound for our walk. The argument also covers the case $p=7/8$.

\begin{proposition}\label{prop:embedded-domination}
	Let $11/16<p\le7/8$ and assume \eqref{eq:activity-lln-assumption}. Then
	\begin{equation}\label{eq:embedded-escape}
		|\widehat S_k|\longrightarrow\infty \quad\as.
	\end{equation}
	Moreover, for every $0<\beta<\alpha(p)$, where 	$\alpha(p)=2\qeff(p)-1=(8p-4)/3$, there is an almost surely positive random constant $C_\beta$ such that
	\begin{equation}\label{eq:embedded-rate-q}
		|\widehat S_k|\ge C_\beta k^\beta
	\end{equation}
	for all sufficiently large $k$.
\end{proposition}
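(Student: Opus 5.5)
The plan is to apply the lower comparison of \cref{prop:radial-coupling}(i) to $R_k:=|\widehat S_k|$, $\mathcal H_k:=\G_k$, and feed it the classical superdiffusive limit of \cref{prop:erw-escape}.

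First, $R$ has the required structure. It is adapted to $\G_k$, satisfies $R_k\equiv k\pmod 2$, moves by $\pm1$, and goes to $1$ from $0$ by \cref{prop:embedded-outward}. Fix $\beta<\alpha(p)$. Because $p>11/16$ we have $\qeff(p)>3/4$, so we can choose $q_-\in(3/4,\qeff(p))$ with $2q_--1>\beta$ and $q_-<1$; this works even at $p=7/8$, where $\qeff=1$. By \cref{prop:embedded-comparison}, there is an a.s.\ finite random $K_-$ such that \eqref{eq:embedded-lower-prob} holds for all $k\ge K_-$ with $\widehat S_k\ne0$. Hence
\[
\Pp\Bigl(\bigcup_{K}\mathcal B_K^-(q_-)\Bigr)=1,
\]
and the events $\mathcal B_K^-(q_-)$ increase in $K$.

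Second, for each deterministic $K$, \cref{prop:radial-coupling}(i) gives, on an extension of the space, a classical ERW $E^{-,K}$ with parameter $q_-$, started from a deterministic history, such that $|E^{-,K}_k|\le|\widehat S_k|$ for all $k\ge K$ on $\mathcal B_K^-(q_-)$. By \cref{prop:erw-escape}, $|E^{-,K}_k|/k^{2q_--1}\to|L^{(K)}|>0$ a.s. Therefore, on $\mathcal B_K^-(q_-)$,
\[
\liminf_k \frac{|\widehat S_k|}{k^{2q_--1}}>0,
\]
and so $|\widehat S_k|\ge k^\beta$ eventually, since $2q_--1>\beta$. This holds almost surely on $\mathcal B_K^-(q_-)$ for each $K$.

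Third, take the countable union over $K\in\mathbb N$. Almost surely, $|\widehat S_k|\ge k^{\beta}$ for all large $k$. This gives \eqref{eq:embedded-rate-q} with $C_\beta=1$; a random constant is needed only if one wants the bound for all $k$ beyond a fixed point. It also implies \eqref{eq:embedded-escape}.

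The main subtlety is measure-theoretic, not computational. The coupling is built on an enlarged space for each fixed $K$, and $K_-$ is random, so the coupling cannot be started at $K_-$ directly. The fix is to state the conclusion for each deterministic $K$ as an event of the original process, namely $\{\liminf_k|\widehat S_k|/k^{2q_--1}>0\}\supseteq\mathcal B_K^-(q_-)$ up to a null set. This event does not depend on the auxiliary randomness, and the law of $\widehat S$ is unchanged by the extension. Taking the union over $K$ then concludes the proof. A further routine check is that the version $p_k$ used in $\mathcal B_K^-$ agrees a.s.\ with $\tfrac12+B_k|\widehat S_k|/k$ from \eqref{eq:Bk-def}.
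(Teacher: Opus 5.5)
Your proof is correct and follows the paper's argument: you couple $|\widehat S_k|$ from below with a classical ERW of parameter $q_-\in(3/4,\qeff(p))$ on the events $\mathcal B_K^-(q_-)$, transfer the resulting null-set statement back to the original space, and exhaust over deterministic $K$. The differences are cosmetic: you get both assertions from one choice of $q_-$ with $2q_--1>\beta$ and note that $C_\beta=1$ already works (the paper's argument also gives $|\widehat S_k|/k^\beta\to\infty$), whereas the paper proves escape and the rate separately and phrases the exhaustion as $\Pp(\mathcal B_K^-)\to1$.
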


\begin{proof}
	By \eqref{eq:two-coincidences}, the hypothesis $p>11/16$ is equivalent to $\qeff(p)>3/4$; in particular $p>1/2$, so \cref{prop:embedded-comparison} applies. Throughout the proof, $q$ denotes a parameter with
	\[
		\frac34<q<\qeff(p),
	\]
	chosen separately for each of the two assertions.

	Fix any such $q$. For each deterministic $K\ge2$, write \(  \mathcal B_K^-:=\mathcal B_K^-(q), \) where $\mathcal B_K^-(q)$ is the event defined in \eqref{eq:good-tail-event} with
	\[
		R_k=|\widehat S_k|, \qquad \mathcal H_k=\G_k,
	\]
	and with the version of the conditional outward probabilities fixed in \eqref{eq:Bk-def}. By \cref{prop:embedded-comparison}, almost surely there exists a finite random index $K_-$ such that, for every $k\ge K_-$, on $\{\widehat S_k\ne0\}$,
	\[
		\Pp\left( |\widehat S_{k+1}|=|\widehat S_k|+1 \,\middle|\, \G_k \right) \ge \frac12+\frac{(2q-1)|\widehat S_k|}{2k}.
	\]
	At times when $\widehat S_k=0$, the alternative $\{R_k=0\}$ in the definition of $\mathcal B_K^-$ is automatically satisfied. Hence, almost every trajectory belongs to $\mathcal B_K^-$ for some finite $K$, and therefore
	\[
		\Pp\left( \bigcup_{K=2}^{\infty}\mathcal B_K^- \right)=1.
	\]
	Moreover,
	\[
		\mathcal B_K^-\subseteq\mathcal B_{K+1}^-,
	\]
	since increasing $K$ removes one condition from the defining intersection. Thus, by the continuity of probability from below,
	\begin{equation}\label{eq:one-sided-tail-prob}
		\Pp(\mathcal B_K^-) \longrightarrow \Pp\left( \bigcup_{K=2}^{\infty}\mathcal B_K^- \right) =1.
	\end{equation}
	Fix now a deterministic $K$. Apply \cref{prop:radial-coupling} after adjoining suitable auxiliary randomness independent of the original process and denote the probability measure on this extension by $\widetilde{\Pp}$. On $\mathcal B_K^-$, the coupling gives
	\begin{equation}\label{eq:embedded-lower-coupling}
		|E_k^{-,K}|\le|\widehat S_k|, \qquad k\ge K.
	\end{equation}
	By \cref{prop:erw-escape}, there exists a finite random variable $L_K$ such that
	\[
		\frac{E_k^{-,K}}{k^{2q-1}} \longrightarrow L_K \quad \widetilde{\Pp}\text{-a.s.}, \qquad \widetilde{\Pp}(L_K=0)=0.
	\]
	Consequently,
	\begin{equation}\label{eq:comparison-erw-asymptotic}
		|E_k^{-,K}| = |L_K|k^{2q-1}(1+o(1)) \quad \widetilde{\Pp}\text{-a.s.}, \qquad |L_K|>0 \quad \widetilde{\Pp}\text{-a.s.}
	\end{equation}
	Since $q>3/4$, we have $2q-1>0$, and hence
	\[
		|E_k^{-,K}|\longrightarrow\infty \quad \widetilde{\Pp}\text{-a.s.}
	\]
	Define the event
	\[
		\mathcal E := \left\{ |\widehat S_k|\longrightarrow\infty \right\}.
	\]
	Combining \eqref{eq:embedded-lower-coupling} with \eqref{eq:comparison-erw-asymptotic}, we obtain
	\[
		\widetilde{\Pp} \bigl(\mathcal E^c\cap\mathcal B_K^-\bigr) =0.
	\]
	Both $\mathcal E$ and $\mathcal B_K^-$ depend only on the original process. Since the added randomness is independent of it, the joint law of these two events is the same on the original and extended probability spaces. Therefore,
	\[
		\Pp\bigl(\mathcal E^c\cap\mathcal B_K^- \bigr) =0,
	\]
	and consequently
	\[
		\Pp(\mathcal E^c) = \Pp\bigl(\mathcal E^c\cap\mathcal B_K^-\bigr) + \Pp\bigl(\mathcal E^c\cap(\mathcal B_K^-)^c\bigr) \le \Pp((\mathcal B_K^-)^c).
	\]
	Letting $K\to\infty$ and using \eqref{eq:one-sided-tail-prob}, we obtain $\Pp(\mathcal E^c)=0$. Thus
	\[
		|\widehat S_k|\longrightarrow\infty \quad\as,
	\]
	which proves \eqref{eq:embedded-escape}.

	Fix $0<\beta<\alpha(p)$. Since $2\qeff(p)-1=\alpha(p)$, we may choose $q$ with
	\[
		\frac34<q<\qeff(p) 	\qquad\text{and}\qquad \beta<2q-1,
	\]
	and we form the events $\mathcal B_K^-$ with this $q$, so that \eqref{eq:one-sided-tail-prob}, \eqref{eq:embedded-lower-coupling} and \eqref{eq:comparison-erw-asymptotic} continue to hold. Define
	\[
		\mathcal D_\beta := \left\{ \liminf_{k\to\infty} \frac{|\widehat S_k|} {k^\beta}>0 \right\}.
	\]
	On $\mathcal B_K^-$, outside the $\widetilde{\Pp}$-null set associated with \eqref{eq:comparison-erw-asymptotic}, we have
	\[
		\frac{|\widehat S_k|}{k^\beta} \ge \frac{|E_k^{-,K}|}{k^\beta} = |L_K|k^{2q-1-\beta}(1+o(1)).
	\]
	Since $2q-1-\beta>0$ and $|L_K|>0$, the right-hand side tends to infinity. Hence
	\[
		\widetilde{\Pp} \bigl(\mathcal D_\beta^c\cap\mathcal B_K^- \bigr) =0.
	\]
	Again, both $\mathcal D_\beta$ and $\mathcal B_K^-$ depend only on the original process, so their joint law is unchanged on the extended space. Therefore,
	\[
		\Pp\bigl(\mathcal D_\beta^c\cap\mathcal B_K^- \bigr) =0, \qquad\text{and consequently}\qquad \Pp(\mathcal D_\beta^c) \le \Pp((\mathcal B_K^-)^c).
	\]
	Letting $K\to\infty$ and using \eqref{eq:one-sided-tail-prob}, we obtain $\Pp(\mathcal D_\beta)=1$. On $\mathcal D_\beta$, define
	\[
		C_\beta := \frac12 \left( 1\wedge \liminf_{k\to\infty} \frac{|\widehat S_k|}{k^\beta} \right),
	\]
	which is strictly positive there; on the null complement $\mathcal D_\beta^c$, set $C_\beta:=1/2$, so that $C_\beta$ is an almost surely positive random variable on the whole probability space. By the definition of the lower limit, on $\mathcal D_\beta$ there exists an almost surely finite random index $K_\beta$ such that
	\[
		|\widehat S_k| \ge 	C_\beta k^\beta, \qquad k\ge K_\beta,
	\]
	which proves \eqref{eq:embedded-rate-q}.
\end{proof}

\section{Recurrence, transience, and the rate of escape}\label{sec:main-proof}

The comparison developed in \cref{sec:embedded-walk} allows us to bound the absolute value of the embedded walk from above and below by suitable classical ERWs. We first use the upper comparison to prove recurrence by comparison with a recurrent classical ERW. We then use the lower comparison to prove transience for $11/16<p\le7/8$, the range left open in \cite{MaulikRoySadhukhan2025}.

\subsection{The recurrent regime}\label{subsec:recurrence}
We shall also use the corresponding recurrence result for the classical ERW.

\begin{proposition}
	\label{prop:erw-recurrent}
	Let $q\in[1/2,3/4]$ and consider a classical ERW $(E_n)$ started from any deterministic initial history of $K\ge1$ increments. Then
	\[
		E_n=0 \quad\text{for infinitely many }n \qquad\as.
	\]
\end{proposition}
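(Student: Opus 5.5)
The plan is to reduce to the recurrence theorem for the classical one-dimensional ERW with the standard initial condition, and then transfer it to an arbitrary deterministic initial history by conditioning on an event of positive probability.

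\emph{Step 1 (standard start).} Let $(E_n)$ be the classical ERW started from $W_1$ uniform on $\{-1,1\}$, as in the introduction. For $d=1$ the threshold of Qin~\cite{Qin2025} is $q_1=3/4$, and his classification gives recurrence for every $q\le 3/4$, that is, $E_n=0$ infinitely often almost surely. The boundary case $q=1/2$ is the simple symmetric random walk, since by \eqref{eq:ERW-transition} every step is then a fair coin independent of the past; it is recurrent by P\'olya's theorem. So under the standard start the statement holds for all $q\in[1/2,3/4]$, and this step is only a citation.

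\emph{Step 2 (every finite history has positive probability).} Fix a deterministic history $h=(w_1,\dots,w_K)\in\{-1,1\}^K$. By \eqref{eq:ERW-transition}, when $E_n=e$ with $|e|\le n$ the probability of a $+1$ step is $\frac12+\frac{(2q-1)e}{2n}$, which lies in $[1-q,q]\subset(0,1)$ because $q<1$. Hence, under the standard start,
\[
\Pp(W_1=w_1,\dots,W_K=w_K)\ge \tfrac12(1-q)^{K-1}>0 .
\]

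\emph{Step 3 (transfer).} Because $(n,E_n)$ is a Markov chain, the conditional law of $(E_n)_{n\ge K}$ given $\{(W_1,\dots,W_K)=h\}$ is exactly the law of the ERW started from the deterministic history $h$. That law depends on $h$ only through the endpoint $E_K=\sum_j w_j$. The event $\{E_n=0\text{ for infinitely many }n\}$ is determined by $(E_n)_{n\ge K}$ and has probability one under the standard start. Conditioning an almost sure event on an event of positive probability leaves it almost sure, so it also has probability one for the walk started from $h$.

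The only step requiring care is the positivity in Step 2, which ensures the conditioning is legitimate rather than conditioning on a null event; it holds for all $q<1$. No other obstacle arises, since the substantive input is Qin's theorem.
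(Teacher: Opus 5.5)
Your proposal is correct and matches the paper's argument: cite Qin's Theorem~1.4 for the standard start, observe that every prescribed finite history has positive probability, and condition on it, since given that history the evolution after time $K$ is exactly the ERW started from it. The differences are cosmetic: you condition on $W_1$ directly under the uniform start (with the explicit bound $\tfrac12(1-q)^{K-1}$) instead of invoking symmetry for the first increment, and your separate treatment of $q=1/2$ via P\'olya's theorem is redundant but harmless.
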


Under the standard initial condition, this is \cite[Theorem~1.4]{Qin2025}. By symmetry, the same result holds for either choice of the first increment. Every prescribed finite initial history has positive probability under the corresponding ERW, and the transition probabilities after time $K$ depend on the past only through $(K,E_K)$. Conditioning on that history therefore gives the stated extension.

\begin{theorem}
\label{thm:recurrence}
	If $0<p<11/16$, then almost surely $S_n=0$ for infinitely many $n$; that is, the walk is recurrent.
\end{theorem}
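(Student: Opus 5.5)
The plan is to run the argument of \cref{prop:embedded-domination} in reverse: use the \emph{upper} comparison of \cref{prop:radial-coupling} with $R_k=|\widehat S_k|$ and $\mathcal H_k=\G_k$, and dominate $|\widehat S_k|$ by the absolute value of a \emph{recurrent} classical ERW. Since $\widehat S_k=S_{\tau_k}$, infinitely many zeros of $\widehat S$ give infinitely many zeros of $S$. The recurrent ERW is supplied by \cref{prop:erw-recurrent}. The admissibility condition $R_K=|\widehat S_K|\le K$ holds automatically. I split into the cases $\mu\le0$ and $\mu>0$.

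\emph{Case $0<p\le1/2$.} Here $\mu\le0$, so by \eqref{eq:outward-immediate} the one-step outward probability is at most $1/2$ at every time $n$ with $S_n\ne0$. Averaging over the waiting time $N$, exactly as in the proof of \eqref{eq:Bk-def} but without using \cref{prop:embedded-outward} or its LLN hypothesis, gives
\[
\Pp\bigl(|\widehat S_{k+1}|=|\widehat S_k|+1\mid\G_k\bigr)\le\frac12 \quad\text{on }\{\widehat S_k\ne0\}.
\]
So with $q_+=1/2$ the event $\mathcal B_2^+(1/2)$ is the whole space. \Cref{prop:radial-coupling}(ii) then couples $|\widehat S_k|$ below $|E^{+,2}_k|$, where $E^{+,2}$ is a simple symmetric random walk, i.e.\ an ERW with $q=1/2$. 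By \cref{prop:erw-recurrent} that walk returns to $0$ infinitely often, so $\widehat S_k=0$ infinitely often, almost surely on the extended space. The event $\{\widehat S_k=0\text{ i.o.}\}$ depends only on the original process, so it also has probability one under $\Pp$.

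\emph{Case $1/2<p<11/16$.} Here $\mu>0$ and, by \eqref{eq:two-coincidences}, $\qeff(p)\in(1/2,3/4)$. \Cref{thm:lln} gives \eqref{eq:activity-lln-assumption}, so \cref{prop:embedded-comparison} applies. Fix $q_+\in(\qeff(p),3/4]$; then $q_+<1$ and $\qeff(p)<1$, and almost surely \eqref{eq:embedded-upper-prob} holds for all $k\ge K_+$. As in the proof of \cref{prop:embedded-domination}:
\begin{itemize}[label=--]
\item The events $\mathcal B_K^+:=\mathcal B_K^+(q_+)$ increase in $K$, and their union has full probability, so $\Pp(\mathcal B_K^+)\to1$.
\item For each fixed deterministic $K$, \cref{prop:radial-coupling}(ii) gives $|\widehat S_k|\le|E^{+,K}_k|$ for $k\ge K$ on $\mathcal B_K^+$.
\item Since $q_+\in[1/2,3/4]$, \cref{prop:erw-recurrent} applies to $E^{+,K}$ with its all-positive initial history, so $E^{+,K}_k=0$ infinitely often almost surely. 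Hence, with $\mathcal R:=\{\widehat S_k=0\text{ i.o.}\}$, we get $\widetilde\Pp(\mathcal R^c\cap\mathcal B_K^+)=0$.
\item Both $\mathcal R$ and $\mathcal B_K^+$ are events of the original process, so $\Pp(\mathcal R^c)\le\Pp((\mathcal B_K^+)^c)\to0$.
\end{itemize}
Therefore $S_{\tau_k}=0$ for infinitely many $k$, and since $\tau_k<\infty$ for all $k$ by \cref{lem:A-infinity}, $S_n=0$ for infinitely many $n$.

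The main obstacle is bookkeeping rather than new estimates. One point is ensuring that the version of $p_k$ used in $\mathcal B_K^+$ is the explicit one from \eqref{eq:Bk-def}, so that the a.s.\ bounds transfer. The other is the passage from the extended space back to $\Pp$ via the increasing family $\mathcal B_K^+$. A smaller point is checking, in the $\mu\le0$ case, that the averaging formula \eqref{eq:Bk-def} holds without the LLN hypothesis. It does, because only the representation $\frac12+G(t,k,s)|s|/k$ is needed. Note that $C_p$ keeps the sign of $\mu$, so $G\le0$ when $\mu\le0$.
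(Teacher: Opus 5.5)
Your proposal is correct and follows the paper's proof essentially step for step. It uses the same split at $p=1/2$ with $q_+=1/2$ and $\Pp(\mathcal B_2^+(1/2))=1$, the same choice $q_+>\qeff(p)$ with the increasing events $\mathcal B_K^+(q_+)$, and the same transfer from the extended space back to $\Pp$; the only cosmetic difference is that you allow $q_+=3/4$, which is harmless because \cref{prop:erw-recurrent} covers the closed interval $[1/2,3/4]$.
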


\begin{proof}
We apply \cref{prop:radial-coupling} with
\[
    R_k=|\widehat S_k|, \qquad \mathcal H_k=\G_k.
\]
Its assumptions are satisfied since $|\widehat S_k|\equiv k\pmod2$, $|\widehat S_k|\le k$, and $|\widehat S_{k+1}|=1$ whenever $\widehat S_k=0$. We first show that, for every $p<11/16$, the absolute value of the embedded walk can be compared from above with that of a recurrent classical ERW.

\smallskip
\emph{Case $0<p\le1/2$.}
Here $\mu\le0$, and \eqref{eq:outward-immediate} gives
\[
    \psi(n,k,s)\le\frac12, \qquad n\ge k,\quad s\ne0.
\]
By the first assertion of \cref{lem:waiting-concentration}, conditionally on $\G_k$ the time immediately before the next nonzero step has weights $w_{\tau_k,k,\widehat S_k}(n)$. Hence, on $\{\widehat S_k\ne0\}$,

\begin{equation*}
    \Pp\left(|\widehat S_{k+1}|=|\widehat S_k|+1 \,\middle|\, \G_k \right)   = \sum_{n=\tau_k}^{\infty}    w_{\tau_k,k,\widehat S_k}(n)    \psi(n,k,\widehat S_k)    \le\frac12 .
\end{equation*}

Thus we may take $q_+=1/2$, and
\[
    \Pp\bigl(\mathcal B_K^+(1/2)\bigr)=1
\]
for every $K\ge2$.

\smallskip
\emph{Case $1/2<p<11/16$.}
By \eqref{eq:two-coincidences},
\[
    \qeff(p)<\frac34.
\]
Choose
\[
    \qeff(p)<q_+<\frac34.
\]
By \cref{thm:lln}, the assumptions of \cref{prop:embedded-comparison} are satisfied. Hence, almost surely, for all sufficiently large $k$,
\[
    \Pp\left( |\widehat S_{k+1}|=|\widehat S_k|+1 \,\middle|\, \G_k \right) \le \frac12+ \frac{(2q_+-1)|\widehat S_k|}{2k}
\]
on $\{\widehat S_k\ne0\}$. Therefore the events $\mathcal B_K^+(q_+)$ increase with $K$ and
\begin{equation}\label{eq:upper-tail-prob}
    \Pp\bigl(\mathcal B_K^+(q_+)\bigr)\longrightarrow1 .
\end{equation}

\smallskip
We now conclude in the same way in both cases. Fix a deterministic $K$ and apply the upper-comparison part of \cref{prop:radial-coupling}, denoting by $\widetilde\Pp$ the probability measure on the resulting extension. There exists a classical ERW $(E_k^{+,K})_{k\ge K}$ with memory parameter $q_+$ such that, on $\mathcal B_K^+(q_+)$,
\[
    |\widehat S_k| \le |E_k^{+,K}|, \qquad k\ge K.
\]
Write
\[
    \mathcal R:=\{S_n=0\ \text{for infinitely many } n\}.
\]
Since $q_+<3/4$, \cref{prop:erw-recurrent} gives $E_k^{+,K}=0$ for infinitely many $k$, $\widetilde\Pp$-almost surely. On $\mathcal B_K^+(q_+)$ the domination forces $\widehat S_k=0$ at each such $k$, and $\widehat S_k=S_{\tau_k}$ with $\tau_k<\infty$ almost surely by
\cref{lem:A-infinity}. Hence
\[
    \widetilde\Pp\bigl(\mathcal R^c\cap\mathcal B_K^+(q_+)\bigr)=0 .
\]
Both $\mathcal R$ and $\mathcal B_K^+(q_+)$ depend only on the original process, and the auxiliary randomness adjoined in \cref{prop:radial-coupling} is independent of it, so the joint law of these two events is the same under $\Pp$ and $\widetilde\Pp$. Therefore
$\Pp\bigl(\mathcal R^c\cap\mathcal B_K^+(q_+)\bigr)=0$, and consequently
\[
    \Pp(\mathcal R^c)  =\Pp\bigl(\mathcal R^c\cap\mathcal B_K^+(q_+)\bigr) +\Pp\bigl(\mathcal R^c\cap(\mathcal B_K^+(q_+))^c\bigr) \le\Pp\bigl((\mathcal B_K^+(q_+))^c\bigr).
\]
For $0<p\le1/2$ the right-hand side already vanishes for $K=2$, since $\Pp(\mathcal B_2^+(1/2))=1$. For $1/2<p<11/16$ it tends to $0$ as $K\to\infty$ by \eqref{eq:upper-tail-prob}. In both cases $\Pp(\mathcal R)=1$, which is the assertion.
\end{proof}

\begin{remark}\label{rem:endpoint}
	The argument above requires $q_+<3/4$, and therefore does not cover the endpoint $p=11/16$, where $\qeff(p)=3/4$: the convergence $B_k\to(2\qeff(p)-1)/2$ in \eqref{eq:Bk-limit} gives the required comparison only for a parameter strictly above the limit. There we use the recurrence proved in \cite[Proposition~2.8]{MaulikRoySadhukhan2025}.
\end{remark}

\subsection{The transient regime}\label{subsec:transience}
We begin with the range left open by Maulik, Roy and Sadhukhan \cite{MaulikRoySadhukhan2025}.

\begin{theorem}\label{thm:open-interval}
	If $\frac{11}{16}<p\le\frac78$, then
	\[
		|S_n|\longrightarrow\infty 	\quad\as.
	\]
	In particular, the walk is transient.
\end{theorem}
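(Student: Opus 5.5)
The plan is to transfer the divergence of the embedded walk, already established in \cref{prop:embedded-domination}, back to the original walk $S$, using the fact that $S$ is constant between consecutive nonzero steps.

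\emph{Step 1: verify the hypotheses.} For $11/16<p\le7/8$ we have $p\le7/8$, so \cref{thm:lln} holds. This gives exactly the law-of-large-numbers assumption \eqref{eq:activity-lln-assumption}. Since also $p>11/16$, \cref{prop:embedded-domination} applies and yields $|\widehat S_k|\to\infty$ almost surely.

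\emph{Step 2: pass from the embedded walk to $S$.} By \cref{lem:A-infinity}, every $\tau_k$ is finite almost surely and $\tau_k\to\infty$. For every $n\ge2$ there is a unique $k=A_n$ with $\tau_k\le n<\tau_{k+1}$. All increments $X_{j}$ with $\tau_k<j\le n$ vanish by definition of $\tau_{k+1}$, so
\[
	S_n=S_{\tau_{A_n}}=\widehat S_{A_n}.
\]
Since $A_n\to\infty$ almost surely and $|\widehat S_k|\to\infty$ almost surely, composing the two gives $|S_n|=|\widehat S_{A_n}|\to\infty$ almost surely.

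\emph{Step 3: transience.} Divergence of $|S_n|$ implies that $S_n=0$ for only finitely many $n$ almost surely, which is transience in the sense fixed in the introduction.

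There is no real obstacle left at this point: all the difficulty sits in the coupling of \cref{prop:embedded-domination}, which has already been established. The only point needing care is the identity $S_n=\widehat S_{A_n}$, namely that zero increments do not move the walk. Together with $A_n\to\infty$, this lets the statement for the embedded walk pass to the full time index.
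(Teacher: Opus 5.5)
Your proposal is correct and matches the paper's proof: you use \cref{thm:lln} to meet the hypotheses of \cref{prop:embedded-domination}, obtain $|\widehat S_k|\to\infty$, and transfer this to $S$ through $S_n=\widehat S_{A_n}$ and $A_n\to\infty$ from \cref{lem:A-infinity}. Your justification of $S_n=\widehat S_{A_n}$ via $\tau_{A_n}\le n<\tau_{A_n+1}$ is slightly more explicit than the paper's, which simply asserts this identity.
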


\begin{proof}
	Throughout the range
	\[
		\frac{11}{16}<p\le\frac78,
	\]
	we have
	\[
		\frac{S_n}{n}\longrightarrow0, \qquad \frac{A_n}{n}\longrightarrow\frac23 \quad\as.
	\]
	by \cref{thm:lln}. Thus, the assumptions of \cref{prop:embedded-domination} are satisfied. Moreover, by \eqref{eq:two-coincidences},
	\[
		\qeff(p)>\frac34 \quad\Longleftrightarrow\quad p>\frac{11}{16}.
	\]
	We may therefore choose
	\[
		\frac34<q<\qeff(p).
	\]
	The lower comparison in \cref{prop:embedded-domination} then gives
	\[
		|\widehat S_k|\longrightarrow\infty \quad\as.
	\]
	It remains only to return from the embedded walk to the original walk. Since $\widehat S_k$ records the position after the $k$th nonzero step,
	\[
		S_n=\widehat S_{A_n} \qquad n\ge 2.
	\]
	By \cref{lem:A-infinity}, $A_n\to\infty$ almost surely. Therefore,
	\[
		|S_n|=|\widehat S_{A_n}|\longrightarrow\infty \qquad\as.
	\]
	This proves the transience of the original walk.
\end{proof}

The same comparison gives more than transience: it yields a polynomial lower bound on the distance from the origin. This estimate will be again needed in the analysis at $p=7/8$.
\begin{proposition}\label{prop:lower-escape}
	Let $11/16<p\le7/8$ and set
	\[
		\alpha(p):=\frac{8p-4}{3}.
	\]
	For every $0<\beta<\alpha(p)$, there exist an almost surely positive random constant $C_\beta$ and an almost surely finite random time $N_\beta$ such that
	\begin{equation}\label{eq:poly-lower}
		|S_n|\ge C_\beta n^\beta, \qquad n\ge N_\beta.
	\end{equation}
\end{proposition}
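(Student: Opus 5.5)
The plan is to transfer the lower bound \eqref{eq:embedded-rate-q} for the embedded walk to the original walk by a time change, using the identity $S_n=\widehat S_{A_n}$ (valid for $n\ge2$, as noted in the proof of \cref{thm:open-interval}) together with the law of large numbers $A_n/n\to2/3$ from \cref{thm:lln}.

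Fix $0<\beta<\alpha(p)$. Since $11/16<p\le7/8$, \cref{thm:lln} shows that the hypothesis \eqref{eq:activity-lln-assumption} holds. Hence \cref{prop:embedded-domination} applies with this $\beta$. It gives an almost surely positive random constant $C'_\beta$ and an almost surely finite random index $K_\beta$ such that $|\widehat S_k|\ge C'_\beta k^\beta$ for all $k\ge K_\beta$.

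Next we pass from $k$ to $n$. By \cref{lem:A-infinity}, $A_n\to\infty$ almost surely, so there is an almost surely finite $N'$ with $A_n\ge K_\beta$ for $n\ge N'$. By \cref{thm:lln}, $A_n/n\to2/3$, so there is an almost surely finite $N''$ with $A_n\ge n/2$ for $n\ge N''$. For $n\ge N_\beta:=\max\{N',N'',2\}$ we then have
\[
|S_n|=|\widehat S_{A_n}|\ge C'_\beta A_n^\beta\ge C'_\beta 2^{-\beta}n^\beta .
\]
We take $C_\beta:=2^{-\beta}C'_\beta$, which is almost surely positive. This gives \eqref{eq:poly-lower}.

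I do not expect a genuine obstacle here. The substantive work was already done in \cref{prop:embedded-domination}. The only point needing care is that the random index $K_\beta$ and the random time $N_\beta$ are defined off a null set, since both come from almost-sure statements. One can adopt the same convention as in the proof of \cref{prop:embedded-domination}, namely assigning arbitrary values on the exceptional null set.
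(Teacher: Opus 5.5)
Your proposal is correct and follows the paper's proof: apply \cref{prop:embedded-domination} (justified by \cref{thm:lln}) and then transfer the bound via $S_n=\widehat S_{A_n}$, $A_n\to\infty$ and $A_n/n\to2/3$. Your explicit bound $A_n\ge n/2$ for large $n$, giving $C_\beta=2^{-\beta}C'_\beta$, is simply a concrete version of the paper's ``after possibly decreasing the random constant'' step.
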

\begin{proof}
	Fix $0<\beta<\alpha(p)$. By \cref{thm:lln}, the assumptions of \cref{prop:embedded-domination} hold throughout the stated range. Since
	\[
		2\qeff(p)-1=\alpha(p),
	\]
	we can choose
	\[
		\frac34<q<\qeff(p) \qquad\text{such that}\qquad \beta<2q-1.
	\]
	By \cref{prop:embedded-domination}, there exists an almost surely positive random constant $C$ such that, for all sufficiently large $k$,
	\[
		|\widehat S_k|\ge Ck^\beta \quad\as.
	\]
	On the other hand,
	\[
		\frac{A_n}{n}\longrightarrow\frac23 \quad\as.
	\]
	by \cref{thm:lln}. Since $S_n=\widehat S_{A_n}$ and $A_n\to\infty$, we obtain, for all sufficiently large $n$,
	\[
		|S_n| = |\widehat S_{A_n}| \ge 	C A_n^\beta.
	\]
	Moreover,
	\[
		\frac{A_n^\beta}{n^\beta} \longrightarrow \left(\frac23\right)^\beta \quad\as.
	\]
	Hence, after possibly decreasing the random constant, there exist an almost surely positive $C_\beta$ and an almost surely finite $N_\beta$ such that
	\[
		|S_n|\ge C_\beta n^\beta, \qquad n\ge N_\beta.
	\]
\end{proof}

The preceding lower bound is also strong enough to identify the exact normalization. In particular, it rules out a zero limit. 

\begin{proposition}\label{prop:nondegenerate-limit}
	Let $11/16<p<7/8$ and set
	\[
		\alpha:=\frac{8p-4}{3}.
	\]
	Then there is a finite random variable $L_p$ such that
	\begin{equation}\label{eq:nondegenerate-limit}
		\frac{S_n}{n^\alpha}\longrightarrow L_p \quad\as, \qquad \Pp(L_p=0)=0.
	\end{equation}
\end{proposition}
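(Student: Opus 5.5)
The plan is to reprove the convergence by normalizing $S_n$ with a random predictable product that turns it into a martingale. I will then show that this product is asymptotically $\Gamma n^\alpha$ with $\Gamma\in(0,\infty)$. Finally I will rule out a zero limit by combining a tail estimate for the martingale with \cref{prop:lower-escape}.

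\emph{Step 1 (martingale).} Note that $\alpha=\frac{8p-4}{3}=\frac{4\mu}{3}$. By \eqref{eq:drift-S}, $\E(S_{n+1}\mid\F_n)=S_n\bigl(1+\mu(2-r_n)/n\bigr)$. Set
\[
\gamma_n:=\prod_{j=2}^{n-1}\Bigl(1+\frac{\mu(2-r_j)}{j}\Bigr),\qquad M_n:=\frac{S_n}{\gamma_n}.
\]
Then $\gamma_n$ is $\F_{n-1}$-measurable and $(M_n)$ is a martingale, since $\gamma_{n+1}$ is $\F_n$-measurable. Its increments are $(X_{n+1}-\E(X_{n+1}\mid\F_n))/\gamma_{n+1}$, bounded by $2/\gamma_{n+1}$. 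By \cref{thm:lln}, $r_j\to2/3$, so $\mu(2-r_j)\to\alpha$ and $\gamma_n=n^{\alpha+o(1)}$ almost surely. Since $\alpha>1/2$ exactly when $p>11/16$, the sum $\sum_n\gamma_{n+1}^{-2}$ is finite almost surely. Hence $M_n\to M_\infty$ almost surely by \cite[Theorem~2.17]{HallHeyde1980}. As a byproduct, $|S_n|\le n^{\alpha+\varepsilon}$ eventually, so $x_n^2\le n^{2\alpha-2+2\varepsilon}$, and this is summable against $1/n$ because $\alpha<1$.

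\emph{Step 2 (sharp asymptotics of $\gamma_n$).} Write $y_n=r_n-2/3$, so that $\mu(2-r_j)=\alpha-\mu y_j$. Then
\[
\log\gamma_n=\alpha\log n-\mu\sum_{j<n}\frac{y_j}{j}+O(1)+\text{convergent terms}.
\]
It therefore suffices to show that $\sum_j y_j/j$ converges. Expanding $r-\tfrac32r^2=-y-\tfrac32y^2$, the second coordinate of \eqref{eq:SA} becomes the exact identity
\[
y_{n+1}-y_n=\frac{1}{n+1}\Bigl(-y_n-\tfrac32y_n^2+\tfrac{\mu^2}{2}x_n^2+\eta_{n+1}\Bigr).
\]
Summing this telescopes, because $y_n$ is bounded, and expresses $\sum y_n/(n+1)$ through four pieces: a convergent martingale series $\sum\eta_{n+1}/(n+1)$, the series $\sum x_n^2/(n+1)$, which converges by Step 1, the series $\sum y_n^2/(n+1)$, and a bounded boundary term. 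The series $\sum y_n^2/(n+1)$ converges by \eqref{eq:V-convergence}, since $\sum r_n y_n^2/(n+1)<\infty$ and $r_n\to2/3$. Hence $\gamma_n/n^\alpha\to\Gamma\in(0,\infty)$ almost surely. Consequently $S_n/n^\alpha\to L_p:=\Gamma M_\infty$, which is finite.

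\emph{Step 3 (nondegeneracy; the main obstacle).} \cref{prop:lower-escape} alone only gives $|S_n|\ge C n^\beta$ for $\beta<\alpha$, which does not exclude $L_p=0$. I would therefore quantify the martingale tail. On $\{M_\infty=0\}$ we have $S_n=-\gamma_n(M_\infty-M_n)$. I will show that almost surely $|M_\infty-M_n|\le n^{1/2-\alpha+\varepsilon}$ for all large $n$. For this I work on the events $\{\gamma_j\ge j^{\alpha-\varepsilon/4}\ \forall j\ge m\}$, whose probabilities tend to $1$ as $m\to\infty$. On such an event I stop the martingale so that its conditional variance after time $n$ is deterministically $O(n^{1-2\alpha+\varepsilon/2})$. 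I then apply Azuma or Freedman inequalities to the tail suprema $\sup_{k\ge n}|M_k-M_n|$ along $n=2^i$, and use Borel--Cantelli. This gives $|S_n|\le n^{1/2+\varepsilon}$ eventually on $\{M_\infty=0\}$. Choosing $\beta\in(1/2+\varepsilon,\alpha)$, which is possible since $\alpha>1/2$, \cref{prop:lower-escape} then yields a contradiction except on a null set. Hence $\Pp(L_p=0)=\Pp(M_\infty=0)=0$. I expect the care to lie in this step: the normalizer $\gamma_n$ is random, so the tail bound has to be done via localization on the events above rather than with deterministic weights.
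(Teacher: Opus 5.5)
Your proposal is correct, but it takes a different route from the paper. Your Step~2 is essentially the paper's first step: the convergence of $\sum_n y_n/n$ via the $y$-recursion. The paper takes $\sum_n x_n^2/(n+1)<\infty$ directly from \eqref{eq:V-convergence}, since $\kappa<0$, so your Step~1 byproduct is not needed, though it is harmless.

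After that the two arguments diverge. Instead of the multiplicative martingale $S_n/\gamma_n$, the paper works with $\log R_n$, where $R_n=|S_n|$. It applies \cref{prop:lower-escape} with $\beta\in(1/2,\alpha)$ to get $\sum_n R_n^{-2}<\infty$. This makes both the Taylor remainder and the martingale $\sum_n \iota_n\xi_{n+1}/R_n$ converge, so $\log R_n-\alpha\log n\to Z$ with $Z$ finite. Convergence and nondegeneracy, $|L_p|=e^Z>0$, then come out together, with no concentration inequality. The price is a separate remark that the sign of $S_n$ is eventually constant.

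Your route separates the two assertions. Steps~1--2 prove convergence of $S_n/n^\alpha$ using only the law of large numbers and $\alpha>1/2$, without the comparison with classical ERWs. By contrast, the paper's convergence proof already depends on \cref{prop:lower-escape}. The sign of $L_p$ also comes for free from $M_\infty$.

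The cost is your Step~3, and it works as you describe. On $\{\gamma_j\ge j^{\alpha-\varepsilon/4}\ \forall j\ge m\}$ the stopped increments are deterministically bounded by $2(k+1)^{-\alpha+\varepsilon/4}$. The tail variance is then $O(n^{1-2\alpha+\varepsilon/2})$, against a threshold of $n^{1/2-\alpha+\varepsilon}$. A maximal Azuma bound gives probabilities at most $2\exp(-cn^{3\varepsilon/2})$, which is summable, so Borel--Cantelli applies. Since $\gamma_n/n^\alpha\to\Gamma\in(0,\infty)$, these events exhaust the probability space as $m\to\infty$.

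As a bonus, you get the quantitative rate $|M_n-M_\infty|=O(n^{1/2-\alpha+\varepsilon})$. This is a natural first step toward the fluctuation question raised in \cref{sec:conclusion}.
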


\begin{proof}
	Set
	\[
		x_n:=\frac{S_n}{n}, \qquad r_n:=\frac{A_n}{n}, \qquad y_n:=r_n-\frac23.
	\]

	We first prove that
	\begin{equation}\label{eq:y-harmonic-converges}
		\sum_{n=2}^{\infty}\frac{y_n}{n} \quad\text{converges almost surely.}
	\end{equation}
	Since $p<7/8$, we have $\kappa<0$ in \eqref{eq:kappa}. It follows from 	\eqref{eq:V-convergence} that
    
	\begin{equation}\label{eq:x-square-summable-subcritical} 
		\sum_{n=2}^{\infty}\frac{x_n^2}{n+1}<\infty \qquad\as.
	\end{equation}
	Moreover, the second term in \eqref{eq:V-convergence} implies
    \[
    	\sum_{n=2}^{\infty}\frac{r_ny_n^2}{n+1}<\infty 	\qquad\as.
    \]
    Since $r_n\to2/3$, we have $r_n\ge1/3$ for all sufficiently large $n$. Therefore,
    \begin{equation}\label{eq:y-square-summable-subcritical}
    	\sum_{n=2}^{\infty}\frac{y_n^2}{n+1}<\infty   	\qquad\as.
    \end{equation}
    
    Let
	\[
		I_{n+1}:=\1_{\{X_{n+1}\ne0\}}, 	\qquad \eta_{n+1}:= I_{n+1}-\E(I_{n+1}\mid\F_n).
	\]
	The second coordinate of \eqref{eq:SA} gives
	\begin{equation}\label{eq:y-recursion-subcritical}
		y_{n+1}-y_n = \frac1{n+1} \left( -y_n-\frac32y_n^2 +\frac{\mu^2}{2}x_n^2+\eta_{n+1}
		\right).
	\end{equation}
	Since $|\eta_{n+1}|\le1$,
	\[
		\sum_{n=2}^{\infty} \E\left( \left.
			\frac{\eta_{n+1}^2}{(n+1)^2} \right|\F_n \right) <\infty.
	\]
	Hence the martingale series
	\[
		\sum_{n=2}^{\infty}\frac{\eta_{n+1}}{n+1}
	\]
	converges almost surely.

	Rearranging \eqref{eq:y-recursion-subcritical}, we obtain
	\[
		\frac{y_n}{n+1} = -(y_{n+1}-y_n) -\frac{3y_n^2}{2(n+1)} +\frac{\mu^2x_n^2}{2(n+1)} +\frac{\eta_{n+1}}{n+1}.
	\]
	The first term on the right telescopes, the next two series converge absolutely by \eqref{eq:x-square-summable-subcritical} and \eqref{eq:y-square-summable-subcritical}, and the final series is the martingale series above. Therefore,
	\[
		\sum_{n=2}^{\infty}\frac{y_n}{n+1}
	\]
	converges almost surely. Since $|y_n|\le2/3$, the difference between $\sum_n y_n/n$ and $\sum_n y_n/(n+1)$ is absolutely summable. This proves \eqref{eq:y-harmonic-converges}.

	We now study the growth of $|S_n|$. Since $\alpha\in(1/2,1)$, choose $\beta$ such that
	\[
		\frac12<\beta<\alpha.
	\]
	By \cref{prop:lower-escape}, almost surely there exists $C_\beta>0$ such that
	\[
		|S_n|\ge C_\beta n^\beta
	\]
	for all sufficiently large $n$. Setting $R_n:=|S_n|$, we obtain 
    
	\begin{equation}\label{eq:inverse-square-summable}
		\sum_{\{n\ge2:R_n\ne0\}}\frac1{R_n^2}<\infty \qquad\as,
	\end{equation}
	because $2\beta>1$. In particular, $R_n\to\infty$. Since the increments belong to $\{-1,0,1\}$, a change of sign of $S_n$ forces a visit to the origin; hence the sign of $S_n$ is eventually constant.

	Let $\iota_n:=\sgn(S_n)$. For all sufficiently large $n$, $R_n\ge2$ and
	\[
		R_{n+1}=R_n+\iota_nX_{n+1}.
	\]
	Since
	$|\iota_n X_{n+1}/R_n|\le1/2$, Taylor's formula gives
	\begin{equation}\label{eq:log-distance-increment}
		\log R_{n+1}-\log R_n = \frac{\iota_nX_{n+1}}{R_n} +\rho_{n+1}, \qquad |\rho_{n+1}|\le\frac{C}{R_n^2},
	\end{equation}
	for some deterministic constant $C$. By \eqref{eq:inverse-square-summable}, $\sum_n|\rho_{n+1}|<\infty$ almost surely.

	Define
	\[
		D_{n+1}:= \1_{\{R_n\ge2\}}\frac{\iota_n}{R_n} \left( X_{n+1}-\E(X_{n+1}\mid\F_n) \right).
	\]
	Then $(D_{n+1})$ is a martingale-difference sequence and
	\[
		\sum_{n=2}^{\infty} \E(D_{n+1}^2\mid\F_n) \le \sum_{\{n:R_n\ge2\}}\frac1{R_n^2} <\infty \qquad\as.
	\]
	Therefore, $\sum_nD_{n+1}$ converges almost surely by the martingale convergence theorem, applied after localization; see \cite[Theorem~2.17]{HallHeyde1980}.

	Finally, \eqref{eq:drift-S} gives, for all sufficiently large $n$,
	\begin{align}
		\frac{\iota_n}{R_n}
		\E(X_{n+1}\mid\F_n)
		&=
		\frac{\mu(2-r_n)}{n} \notag\\
		&=
		\frac{4\mu}{3n}-\frac{\mu y_n}{n} = \frac{\alpha}{n}-\frac{\mu y_n}{n}.
		\label{eq:log-predictable-drift}
	\end{align}
	Combining this identity with \eqref{eq:log-distance-increment}, we obtain
	\[
		\log R_{n+1}-\log R_n = \frac{\alpha}{n} -\frac{\mu y_n}{n} +D_{n+1} +\rho_{n+1}
	\]
	for all sufficiently large $n$.

	The series involving $y_n$, $D_{n+1}$, and $\rho_{n+1}$ all converge almost surely. Since
	\[
		\sum_{k=1}^{n-1}\frac1k-\log n
	\]
	also converges, there exists an almost surely finite random variable $Z$ such that
	\[
		\log R_n-\alpha\log n \longrightarrow Z \qquad\as.
	\]
	Consequently,
	\[
		\frac{R_n}{n^\alpha}\longrightarrow e^Z>0 \qquad\as.
	\]

	Let $\Xi\in\{-1,1\}$ denote the eventual sign of $S_n$. Then 
	\[
		\frac{S_n}{n^\alpha} \longrightarrow \Xi e^Z=:L_p 	\qquad\as.
	\]
	Thus $L_p$ is finite and $|L_p|=e^Z>0$ almost surely, which proves \eqref{eq:nondegenerate-limit}.
\end{proof}

We now determine the growth exponent. For $11/16<p<7/8$, it follows from the nondegenerate almost-sure scaling limit proved above. At $p=7/8$, the polynomial lower bound, together with the trivial bound $|S_n|\le n$, yields the exponent $1$.

    \begin{corollary}\label{cor:escape-exponent-subballistic}
    	If $11/16<p\le7/8$, then
    	\begin{equation}\label{eq:escape-exponent-subballistic}
    		\lim_{n\to\infty}\frac{\log|S_n|}{\log n} =\frac{8p-4}{3} \quad\as.
    	\end{equation}
    \end{corollary}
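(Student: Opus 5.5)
The plan splits the range $11/16<p\le7/8$ into the subcritical interval $11/16<p<7/8$ and the endpoint $p=7/8$. In both cases we use $|S_n|\to\infty$ almost surely, from \cref{thm:open-interval}, so that $\log|S_n|$ is eventually well defined and positive.

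\emph{Case $11/16<p<7/8$.} Set $\alpha=(8p-4)/3$. By \cref{prop:nondegenerate-limit}, $S_n/n^\alpha\to L_p$ almost surely with $L_p$ finite and $\Pp(L_p=0)=0$. Hence, almost surely,
\[
	\log|S_n|=\alpha\log n+\log|L_p|+o(1).
\]
Here $\log|L_p|$ is a finite random constant. Dividing by $\log n$ gives \eqref{eq:escape-exponent-subballistic} directly.

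\emph{Case $p=7/8$.} Here $\alpha(7/8)=1$. For the upper bound, the trivial estimate $|S_n|\le n$ gives $\limsup_n \log|S_n|/\log n\le1$. For the lower bound, fix any $\beta\in(0,1)$. \Cref{prop:lower-escape}, which covers $p=7/8$, provides $C_\beta>0$ and $N_\beta<\infty$ such that $|S_n|\ge C_\beta n^\beta$ for $n\ge N_\beta$. This gives
\[
	\liminf_n \frac{\log|S_n|}{\log n}\ge\beta+\lim_n\frac{\log C_\beta}{\log n}=\beta .
\]
We then let $\beta\uparrow1$ along a countable sequence, so that the almost sure statements can be intersected. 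This shows that the limit exists and equals $1$.

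No step is a real obstacle; the substantive work is already contained in \cref{prop:lower-escape,prop:nondegenerate-limit}. The only care needed is bookkeeping: use the countable intersection over $\beta_j\uparrow1$ at $p=7/8$, and use the almost sure positivity of $|L_p|$ so that $\log|L_p|$ is finite.
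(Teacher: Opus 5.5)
Your proposal is correct and follows the paper's proof essentially verbatim: for $11/16<p<7/8$ you take logarithms in the nondegenerate scaling limit of \cref{prop:nondegenerate-limit}, and at $p=7/8$ you combine the trivial bound $|S_n|\le n$ with \cref{prop:lower-escape} and let $\beta\uparrow1$. Restricting to a countable sequence $\beta_j\uparrow1$ makes explicit a bookkeeping step that the paper leaves implicit.
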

\begin{proof}
	Suppose first $11/16<p<7/8$. By \cref{prop:nondegenerate-limit}, with $\alpha=(8p-4)/3$,
	\[
		\frac{|S_n|}{n^\alpha}\longrightarrow |L_p|>0 \quad\as.
	\]
	Taking logarithms gives $\log|S_n|/\log n\to\alpha$ almost surely.
    
	At $p=7/8$ we have $(8p-4)/3=1$. The bound $|S_n|\le n$ gives $\log|S_n|/\log n\le1$, so $\limsup_n\log|S_n|/\log n\le1$. For the lower bound, \cref{prop:lower-escape} applies at $p=7/8$ and yields, for each $\beta<1$, an almost surely positive $C_\beta$ and finite $N_\beta$ with $|S_n|\ge C_\beta n^\beta$ for $n\ge N_\beta$; hence
	\[
		\liminf_{n\to\infty}\frac{\log|S_n|}{\log n} \ge\liminf_{n\to\infty}\frac{\log C_\beta+\beta\log n}{\log n} =\beta.
	\]
	Letting $\beta\uparrow1$ yields $\liminf_n\log|S_n|/\log n\ge1$ almost surely, and combined with the upper bound the limit equals $1$.
\end{proof}

\begin{remark}\label{rem:mrs-upper}
	For $11/16<p<7/8$, \cite[Theorem~2.5(c)]{MaulikRoySadhukhan2025} states that
	\[
		n^{y_p}\left(\frac{S_n}{n}-\Lambda_p\right)\longrightarrow L_p
		\quad\as, \qquad y_p=\frac{7-8p}{3},
	\]
	with $\Lambda_p=0$ in this regime; equivalently, $S_n/n^{\alpha(p)}\to L_p$ almost surely with $\alpha(p)=1-y_p$. That theorem does not assert $L_p\ne0$. By uniqueness of almost-sure limits, its random variable is the same as the one in \cref{prop:nondegenerate-limit}; the proposition therefore strengthens the quoted result by proving $\Pp(L_p=0)=0$.
\end{remark}

\begin{proof}[Proof of \cref{thm:main}]
	By \cref{thm:recurrence} the walk is recurrent for every $p<11/16$, and by \cref{thm:open-interval} it is transient for
	\[
		\frac{11}{16}<p\le\frac78 .
	\]
	By \cite[Proposition~2.8]{MaulikRoySadhukhan2025}, the walk is recurrent at the endpoint $p=11/16$ and transient for $p>7/8$. Hence the walk is recurrent if and only if $p\le11/16$.
\end{proof}

\begin{proof}[Proof of \cref{thm:escape-scale}]
	If
	\[
		\frac{11}{16}<p\le\frac78,
	\]
	then \cref{thm:open-interval} gives
	\[
		|S_n|\longrightarrow\infty 	\quad\as,
	\]
	while \cref{cor:escape-exponent-subballistic} gives
	\[
		\lim_{n\to\infty} \frac{\log|S_n|}{\log n} 	= \frac{8p-4}{3} \quad\as.
	\]
	Since $(8p-4)/3\le1$ in this range, this is precisely \eqref{eq:escape-exponent-main}. It remains to consider $p>7/8$. By \cite[Theorem~2.1]{MaulikRoySadhukhan2025},
	\[
		\frac{S_n}{n}\longrightarrow\Lambda_p 	\quad\as,
	\]
	where
	\[
		\Lambda_p\in\{-c_p,c_p\} \quad\text{with equal probabilities}, \qquad
		c_p= \frac{\sqrt{32p^2-52p+21}}{(2p-1)^2}>0.
	\]
	Consequently,
	\[
		\frac{|S_n|}{n}\longrightarrow c_p>0 \quad\as.
	\]
	In particular, $|S_n|\to\infty$ almost surely, and, for all sufficiently large $n$,
	\[
		\frac{\log|S_n|}{\log n} = 	1+ 	\frac{\log(|S_n|/n)}{\log n} \longrightarrow1 \quad\as.
	\]
	Since $(8p-4)/3>1$ for $p>7/8$, we have
	\[
		1=\min\left\{\frac{8p-4}{3},1\right\}.
	\]
	Thus \eqref{eq:escape-exponent-main} holds throughout the transient regime, which proves part~\textup{(i)}. Part~\textup{(ii)} is precisely \cref{prop:nondegenerate-limit}.
\end{proof}

\section{Asymptotics at the threshold $p=7/8$}\label{sec:critical-scale}
Set $p=7/8$ and define
\[
	x_n:=\frac{S_n}{n}, \qquad y_n:=\frac{A_n}{n}-\frac23, \qquad \lambda:=\frac{\mu^2}{2}=\frac{9}{32}, \qquad z_n:=y_n-\lambda x_n^2.
\]
Write
\[
	\xi_{n+1}:=X_{n+1}-\E(X_{n+1}\mid\F_n), \qquad \eta_{n+1}:=I_{n+1}-\E(I_{n+1}\mid\F_n).
\]
At $p=7/8$, the recursion \eqref{eq:SA} becomes
\begin{align}
	x_{n+1}
	 & =x_n+\frac1{n+1}
	\left(-\frac34x_ny_n+\xi_{n+1}\right),
	\label{eq:critical-x-recursion} \\
	y_{n+1}
	 & =y_n+\frac1{n+1}
	\left(-y_n-\frac32y_n^2+\lambda x_n^2+\eta_{n+1}\right).
	\label{eq:critical-y-recursion}
\end{align}
By \cref{thm:lln}, $(x_n,y_n)\to(0,0)$ almost surely, and hence also $z_n\to0$ almost surely. The variable $z_n$ measures the deviation of $y_n$ from the parabola $y=\lambda x^2$. The choice of $\lambda$ and the derivation of the recursion for $z_n$ are explained in \cref{app:critical-transverse}.

\begin{lemma}\label{lem:transverse-decay}
	For every $\theta\in(0,1)$,
	\begin{equation}\label{eq:transverse-rate}
		n^\theta z_n^2\longrightarrow0 \quad\as.
	\end{equation}
	In particular,
	\[
		z_n=o\!\left(\frac1{\log n}\right) \quad\as.
	\]
\end{lemma}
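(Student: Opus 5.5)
The plan is to derive a scalar stochastic-approximation recursion for $z_n$ in which the drift is \emph{linear} in $z_n$ with coefficient close to $-1$, and then run a Robbins--Siegmund argument on $n^\theta z_n^2$.

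\emph{Step 1: the recursion for $z_n$.} Write $\Delta x_n:=x_{n+1}-x_n$. Then
\[
x_{n+1}^2-x_n^2=2x_n\,\Delta x_n+(\Delta x_n)^2 .
\]
By \eqref{eq:critical-x-recursion},
\[
2x_n\,\Delta x_n=\frac{2}{n+1}\left(-\frac34 x_n^2y_n+x_n\xi_{n+1}\right),
\qquad
(\Delta x_n)^2=O\!\left(\frac1{n^2}\right)
\]
with a deterministic constant. Subtract $\lambda$ times this from \eqref{eq:critical-y-recursion}. In the drift, the linear part is $-y_n+\lambda x_n^2=-z_n$. The quadratic part is
\[
-\frac32y_n^2+\frac32\lambda x_n^2y_n=-\frac32\,y_n\,(y_n-\lambda x_n^2)=-\frac32\,y_nz_n .
\]
This factorization is exactly why $\lambda=\mu^2/2$ was chosen, and it leaves no leftover terms of order $x_n^4$. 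Hence
\[
z_{n+1}=z_n-\frac{1+\frac32y_n}{n+1}\,z_n+\frac{\zeta_{n+1}}{n+1}+\varepsilon_{n+1},
\qquad
\zeta_{n+1}:=\eta_{n+1}-2\lambda x_n\xi_{n+1},\qquad |\varepsilon_{n+1}|\le\frac{C}{n^2}.
\]
Here $(\zeta_{n+1})$ is a bounded martingale-difference sequence and $C$ is deterministic.

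\emph{Step 2: the squared recursion.} Fix $\theta\in(0,1)$ and choose $\delta>0$ so small that $2(1-\delta)-\theta>0$. Square the recursion and take conditional expectations. The cross term with $\zeta_{n+1}$ vanishes, the cross term with $\varepsilon_{n+1}$ is $O(n^{-2})$ because $|z_n|$ is bounded, and all remaining squares are $O(n^{-2})$. On the event $\{|y_m|\le\frac23\delta \text{ for all } m\ge N\}$ this gives
\[
\E(z_{n+1}^2\mid\F_n)\le z_n^2\left(1-\frac{2(1-\delta)}{n+1}\right)+\frac{C'}{n^2},\qquad n\ge N.
\]
Multiplying by $(n+1)^\theta=n^\theta\bigl(1+\theta/n+O(n^{-2})\bigr)$ and setting $W_n:=n^\theta z_n^2$, we obtain, for $n\ge N$ on that event,
\[
\E(W_{n+1}\mid\F_n)\le W_n-\frac{c}{n}\,W_n+C''n^{\theta-2},\qquad c>0 .
\]
The error $n^{\theta-2}$ is summable precisely because $\theta<1$.

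\emph{Step 3: localization and conclusion.} To make the conditional inequality global, I would localize. Let $\sigma_N:=\inf\{m\ge N:|y_m|>\frac23\delta\}$, and define the modified increment to be zero after $\sigma_N$ (or multiply the drift bound by $\1_{\{\sigma_N>n\}}$). The Robbins--Siegmund theorem then gives, on $\{\sigma_N=\infty\}$, that $W_n$ converges and $\sum_n W_n/n<\infty$. Together these force $W_n\to0$, since a positive limit would make $\sum_n W_n/n$ diverge. By \cref{thm:lln}, $y_n\to0$ almost surely, so $\bigcup_N\{\sigma_N=\infty\}$ has full probability, and \eqref{eq:transverse-rate} follows. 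The consequence $z_n=o(1/\log n)$ is immediate, because $|z_n|\le \sqrt{W_n}\,n^{-\theta/2}$ with $W_n\to0$, and $n^{-\theta/2}=o(1/\log n)$.

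\emph{Main obstacle.} The only delicate point is Step 1: showing that the drift of $z_n$ is a multiple of $z_n$, with no residual terms of order $x_n^4$ or $x_n^2y_n$. Such terms would compete with the $1/n$ contraction, since $x_n^2$ is only of order $1/\log n$. This is handled by the factorization $-\frac32y_n^2+\frac32\lambda x_n^2y_n=-\frac32y_nz_n$. Once that identity is verified, the remaining steps are routine supermartingale estimates.
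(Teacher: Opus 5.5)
Your proposal is correct and follows essentially the same route as the paper: the same recursion with drift $-(1+\tfrac32 y_n)z_n$ (the paper's $\chi_n z_n$), then squaring, localization by a stopping time, weighting by $n^\theta$, and Robbins--Siegmund, concluding with the observation that a positive limit of $W_n$ would contradict $\sum_n W_n/n<\infty$. One small correction to your commentary: the factorization $-\tfrac32 y_n^2+\tfrac32 c\,x_n^2y_n=-\tfrac32 y_n(y_n-cx_n^2)$ holds for every $c$, and what actually singles out $\lambda=\mu^2/2$ is the cancellation of the leftover term $(\mu^2/2-c)x_n^2$ in the linear part of the drift.
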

The proof is given in \cref{app:critical-transverse}. 

\begin{proof}[Proof of \cref{thm:critical-scale}]
	By \cref{thm:open-interval}, \(|S_n|\longrightarrow\infty \ \ \as \) 	at $p=7/8$. Since the increments belong to $\{-1,0,1\}$, a change of sign of $S_n$ forces a visit to the origin. Hence the sign of $S_n$ is
	eventually constant. Let $\Xi\in\{-1,1\}$ denote this eventual sign. From \eqref{eq:critical-x-recursion} and
	\[
		y_n=\lambda x_n^2+z_n, 	\qquad \lambda=\frac{9}{32},
	\]
	we obtain
	\begin{equation}\label{eq:critical-cubic-recursion}
		x_{n+1} = x_n+\frac1{n+1} \left( -\frac{27}{128}x_n^3 -\frac34x_nz_n +\xi_{n+1} \right) =:x_n+\Delta x_n.
	\end{equation}
	In particular, $|\Delta x_n|\le C_0/(n+1)$ for some deterministic constant $C_0$.

	\emph{Step 1.} 	Fix
	\[
		0<\varepsilon<\frac16, \qquad 4\varepsilon<\theta<1.
	\]
	By \cref{prop:lower-escape} with $\beta=1-\varepsilon$ and \cref{lem:transverse-decay}, almost surely there exist a rational $c>0$ and an integer $M$ such that
	\begin{equation}\label{eq:x-lower-critical}
		|x_n|\ge cn^{-\varepsilon}, \qquad |z_n|\le n^{-\theta/2} \qquad\text{for all }n\ge M.
	\end{equation}
	For rational $c>0$ and $M\ge2$, define
	\[
		\varsigma_{c,M} := \inf\left\{ n\ge M: |x_n|<cn^{-\varepsilon} \ \text{or}\
		|z_n|>n^{-\theta/2} \right\},
	\]
	and let
	\[
		G_n:=\{n<\varsigma_{c,M}\}.
	\]
	Then $G_n\in\F_n$, and \eqref{eq:x-lower-critical} implies
	\[
		\Pp\left( \bigcup_{c,M}\{\varsigma_{c,M}=\infty\} \right)=1,
	\]
	where the union is over rational $c>0$ and integers $M\ge2$. Therefore it is enough to prove
	\begin{equation}\label{eq:x-sharp-critical}
		x_n^2\log n\longrightarrow\frac{64}{27}
	\end{equation}
	almost surely on each event $\{\varsigma_{c,M}=\infty\}$. Fix such a pair $(c,M)$. Since on $G_n$,
	\[
		|x_n|\ge cn^{-\varepsilon},
	\]
	we have
	\[
		\frac{|\Delta x_n|}{|x_n|} \le C_0c^{-1}n^{-1+\varepsilon} \longrightarrow0.
	\]
	Thus, after increasing $M$ if necessary, we may assume that
	\[
		|\Delta x_n|\le\frac12|x_n| \qquad\text{on }G_n,\quad n\ge M.
	\]
	This does not change the argument, since $\varsigma_{c,M}=\infty$ implies $\varsigma_{c,M'}=\infty$ for every $M'\ge M$.

	\emph{Step 2.} On $G_n$,
	\[
		|x_{n+1}| \ge |x_n|-|\Delta x_n| \ge \frac12|x_n|>0.
	\]
	Hence Taylor's formula for $f(x)=x^{-2}$ gives
	\begin{equation}\label{eq:inverse-square-expansion}
		\frac1{x_{n+1}^2}-\frac1{x_n^2} = \frac{27}{64}\frac1{n+1} +\frac32\frac{z_n}{(n+1)x_n^2} -\frac{2\xi_{n+1}}{(n+1)x_n^3} +\rho_{n+1}.
	\end{equation}
	The remainder has the form
	\[
		\rho_{n+1} 	= \frac12f''(\vartheta_n)\,(\Delta x_n)^2
	\]
	for some $\vartheta_n$ between $x_n$ and $x_{n+1}$. Since
	$|\vartheta_n|\ge|x_n|/2$,
	\[
		\begin{aligned}
			|\rho_{n+1}| \le \frac{3(\Delta x_n)^2}{(|x_n|/2)^4} = \frac{48(\Delta x_n)^2}{|x_n|^4} \le \frac{C_1}{(n+1)^2}|x_n|^{-4} \le C_2n^{-2+4\varepsilon}.
		\end{aligned}
	\]
	Because $4\varepsilon<1$,
	\begin{equation}\label{eq:rho-summable}
		\sum_{n\ge M}\1_{G_n}|\rho_{n+1}|<\infty.
	\end{equation}
    
		\emph{Step 3.}
	Since $|X_{n+1}|\le1$,
	\[
		\E(\xi_{n+1}^2\mid\F_n) = \operatorname{Var}(X_{n+1}\mid\F_n) \le1.
	\]
	Both $\1_{G_n}$ and $x_n$ are $\F_n$-measurable, and $|x_n|\ge cn^{-\varepsilon}$
	on $G_n$, so each summand of
	\[
		\sum_{n\ge M} \1_{G_n} \frac{2\xi_{n+1}}{(n+1)x_n^3}
	\]
	is a bounded martingale difference, and the partial sums form a
	martingale. Its predictable quadratic variation satisfies
	\[
		\sum_{n\ge M} \1_{G_n} \frac{4\,\E(\xi_{n+1}^2\mid\F_n)}{(n+1)^2x_n^6}
		\le \sum_{n\ge M} \1_{G_n} \frac{4}{(n+1)^2}|x_n|^{-6} \le 	C_3\sum_{n\ge M}n^{-2+6\varepsilon}.
	\]
	Since $\varepsilon<1/6$, the series on the right converges. Hence the
	martingale converges almost surely, by
	\cite[Theorem~2.17]{HallHeyde1980}.

	For the term involving $z_n$, on $G_n$,
	\[
		\frac32\frac{|z_n|}{(n+1)x_n^2} \le \frac{3}{2c^2}\, n^{-1- \theta/2+2\varepsilon}.
	\]
	Since $\theta>4\varepsilon$,
	\[
		-1-\frac{\theta}{2}+2\varepsilon<-1,
	\]
	and therefore
	\begin{equation}\label{eq:z-term-summable}
		\sum_{n\ge M} \1_{G_n} \frac{3|z_n|}{2(n+1)x_n^2} <\infty.
	\end{equation}
    
	\emph{Step 4.}
	On $\{\varsigma_{c,M}=\infty\}$, one has $G_n$ for every $n\ge M$. The three error terms in \eqref{eq:inverse-square-expansion} were controlled in Steps 2 and 3: the series $\sum_n\rho_{n+1}$ and $\sum_n\tfrac32z_n/((n+1)x_n^2)$ converge absolutely by \eqref{eq:rho-summable} and \eqref{eq:z-term-summable}, and the martingale $\sum_n2\xi_{n+1}/((n+1)x_n^3)$ converges almost surely. Hence
	the partial sums of the right-hand side of \eqref{eq:inverse-square-expansion}, with the leading term removed, converge almost surely to a finite random limit. Moreover,
	\[
		\sum_{n=M}^{N-1}\frac1{n+1} = \log N+C_M+o(1) \qquad(N\to\infty),
	\]
	where $C_M$ is a deterministic constant depending only on $M$. Therefore, summing \eqref{eq:inverse-square-expansion} from $M$ to $N-1$ gives
	\begin{equation}\label{eq:Theta-existence}
		\frac1{x_N^2}-\frac{27}{64}\log N
		\quad\text{converges almost surely to a finite limit, }\Theta .
	\end{equation}
	This proves \eqref{eq:Theta} on $\{\varsigma_{c,M}=\infty\}$; since the union of these events has probability one, and since the limit in \eqref{eq:Theta-existence} does not depend on $(c,M)$, the random variable
	$\Theta$ is well defined almost surely and \eqref{eq:Theta} holds almost surely. Dividing by $\log N$ we obtain in particular
	\begin{equation}\label{eq:x-sharp-critical-bis}
		x_N^2\log N =\frac{\log N}{\frac{27}{64}\log N+\Theta+o(1)} \longrightarrow
		\frac{64}{27},
	\end{equation}
	which is \eqref{eq:x-sharp-critical}.
    
	\emph{Step 5.}
	Since the sign of $x_n$ is eventually $\Xi$,
	\[
		\sqrt{\log n}\,x_n 	= \Xi\sqrt{x_n^2\log n} \longrightarrow \frac{8\sqrt3}{9}\,\Xi \quad\as.
	\]
	As $x_n=S_n/n$, this proves \eqref{eq:critical-position-scale}. The model and its initial law are invariant under the global sign change
	\[
		(X_j)_{j\ge1}\longmapsto(-X_j)_{j\ge1},
	\]
	which sends $\Xi$ to $-\Xi$. Therefore
	\[
		\Pp(\Xi=1)=\Pp(\Xi=-1)=\frac12.
	\]
	Finally, $y_n=\lambda x_n^2+z_n$ with $\lambda=9/32$. By \eqref{eq:Theta},
	\[
		\lambda x_n^2 =\frac{9/32}{\frac{27}{64}\log n+\Theta+o(1)} =\frac23\cdot\frac1{\log n+\frac{64}{27}\Theta+o(1)}.
	\]
	By
	\cref{lem:transverse-decay} we have $z_n=o(n^{-\theta/2})$ for every
	$\theta\in(0,1)$, and hence $z_n(\log n)^2\to0$. It follows that
	\[
		y_n =\frac23\cdot \frac1{\log n+\frac{64}{27}\Theta+o(1)},
	\]
	which is \eqref{eq:critical-activity-scale}. Multiplying by $\log n$ gives
	$\log n\,y_n\to2/3$. Expanding, one gets
	\[
		y_n =\frac{2}{3\log n} \left(1+\frac{\frac{64}{27}\Theta+o(1)}{\log n}\right)^{-1} =\frac{2}{3\log n} -\frac{128\,\Theta}{81}\cdot\frac1{(\log n)^2} +o\!\left(\frac1{(\log n)^2}\right).
	\]
	Since $y_n=A_n/n-2/3$, this is the last assertion of the theorem.
\end{proof}
These estimates also determine the asymptotic relation between the original time index and the number of nonzero steps. As a consequence, we obtain the corresponding scale for the embedded walk at $p=7/8$.

\begin{corollary}\label{cor:critical-embedded-scale}
	At $p=7/8$,
	\begin{align}
	& \tau_k
		  =\frac32k-\frac{3k}{2\log k} 	+o\!\left(\frac{k}{\log k}\right),
		\label{eq:tau-critical-expansion}     \\
	&	\sqrt{\log k}\,\frac{\widehat S_k}{k} \longrightarrow\frac{4\sqrt3}{3}\,\Xi
		  \quad\as.
		\label{eq:embedded-critical-scale}
	\end{align}
\end{corollary}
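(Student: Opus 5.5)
The plan is to evaluate \cref{thm:critical-scale} along the random times $n=\tau_k$, using the identities $A_{\tau_k}=k$ and $S_{\tau_k}=\widehat S_k$, and then to invert the resulting relation between $k$ and $\tau_k$. By \cref{lem:A-infinity}, $\tau_k<\infty$ and $\tau_k\to\infty$ almost surely, so every almost-sure asymptotic statement in $n$ transfers to the subsequence $n=\tau_k$.

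\emph{Step 1 (expansion of $\tau_k$).} Apply \eqref{eq:critical-activity-scale} at $n=\tau_k$. This gives
\[
\frac{k}{\tau_k}=\frac23+\frac{2}{3\log\tau_k}\bigl(1+o(1)\bigr)=\frac23\left(1+\frac{1+o(1)}{\log\tau_k}\right).
\]
By \cref{thm:lln}, $\tau_k/k\to3/2$, so $\log\tau_k=\log k+O(1)$ and therefore $1/\log\tau_k=(1+o(1))/\log k$. Inverting yields
\[
\frac{\tau_k}{k}=\frac32\left(1+\frac{1+o(1)}{\log k}\right)^{-1}=\frac32-\frac{3}{2\log k}+o\!\left(\frac1{\log k}\right).
\]
Multiplying by $k$ gives \eqref{eq:tau-critical-expansion}. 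We only need the leading correction here, so the $o(1)$ terms, including the $\Theta$-dependent term of order $1/\log^2$, can be absorbed freely.

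\emph{Step 2 (scale of the embedded walk).} Write
\[
\frac{\widehat S_k}{k}=\frac{S_{\tau_k}}{\tau_k}\cdot\frac{\tau_k}{k}.
\]
By \eqref{eq:critical-position-scale} along $n=\tau_k$, we have $\sqrt{\log\tau_k}\,S_{\tau_k}/\tau_k\to\frac{8\sqrt3}{9}\Xi$. Since $\log\tau_k/\log k\to1$, the factor $\sqrt{\log\tau_k}$ can be replaced by $\sqrt{\log k}$. Also $\tau_k/k\to3/2$, and hence
\[
\sqrt{\log k}\,\frac{\widehat S_k}{k}\longrightarrow\frac32\cdot\frac{8\sqrt3}{9}\,\Xi=\frac{4\sqrt3}{3}\,\Xi,
\]
which is \eqref{eq:embedded-critical-scale}.

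\emph{Main obstacle.} The only delicate point is the inversion in Step 1. The $o(1)$ from \eqref{eq:critical-activity-scale} is stated in the variable $\tau_k$, while the conclusion must be in the variable $k$. Two facts make the replacement legitimate:
\begin{itemize}
\item $\log\tau_k-\log k$ is bounded, by the law of large numbers, so $1/\log\tau_k-1/\log k=O(1/\log^2k)=o(1/\log k)$.
\item The evaluation at the random times $\tau_k$ is pathwise, since the theorem holds almost surely for all $n$ simultaneously.
\end{itemize}
Apart from this bookkeeping, no new probabilistic input is required.
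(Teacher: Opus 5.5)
Your proposal is correct and follows essentially the same route as the paper: evaluate \eqref{eq:critical-activity-scale} and \eqref{eq:critical-position-scale} along $n=\tau_k$ using $A_{\tau_k}=k$ and $S_{\tau_k}=\widehat S_k$, then invert using $\tau_k/k\to3/2$ and $\log\tau_k/\log k\to1$. Your handling of the $o(1/\log k)$ error in the inversion and the constant $\frac32\cdot\frac{8\sqrt3}{9}=\frac{4\sqrt3}{3}$ are both right.
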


\begin{proof}
	Since $A_{\tau_k}=k$, \eqref{eq:critical-activity-scale} gives
	\[
		k=\frac{2\tau_k}{3} \left(1+\frac1{\log\tau_k} +o\!\left(\frac1{\log\tau_k}\right)\right).
	\]
	As $\tau_k/k\to3/2$, inversion yields \eqref{eq:tau-critical-expansion}. Substituting $n=\tau_k$ into \eqref{eq:critical-position-scale}, and using $\tau_k/k\to3/2$ and $\log\tau_k/\log k\to1$, gives \eqref{eq:embedded-critical-scale}.
\end{proof}

\section{Conclusion}\label{sec:conclusion}

We have completed the recurrence--transience classification for the elephant random walk with two memory channels. The walk is recurrent for $p\le11/16$ and transient for $p>11/16$. In particular, this settles the
range $11/16<p\le7/8$ left open in \cite{MaulikRoySadhukhan2025}.

For $11/16<p<7/8$, we obtained a more precise description of the transient walk. There exists a finite random variable $L_p$ such that
\[
	\frac{S_n}{n^{(8p-4)/3}} \longrightarrow L_p \quad\as, \qquad \Pp(L_p=0)=0.
\]
Consequently, $|S_n|$ grows on the scale $n^{(8p-4)/3}$. Since $(8p-4)/3$ lies between $1/2$ and $1$, the walk escapes faster than the diffusive scale $\sqrt n$ but slower than the linear scale $n$.

At $p=7/8$, the walk remains sub-ballistic but transient. More precisely,
\[
	\sqrt{\log n}\,\frac{S_n}{n} \longrightarrow \frac{8\sqrt3}{9}\,\Xi \quad\as,
\]
where $\Xi$ takes the values $1$ and $-1$ with equal probability. Thus the walk escapes at the scale $n/\sqrt{\log n}$. We also proved that
\[
	\log n\left(\frac{A_n}{n}-\frac23\right) \longrightarrow\frac23 \quad\as,
\]
which gives the rate at which the proportion of nonzero increments approaches $2/3$.

Several questions remain open. For $11/16<p<7/8$, we do not determine the distribution of $L_p$ or whether it has a density. At $p=7/8$, the distribution and moments of the random variable $\Theta$ in \cref{thm:critical-scale} are also unknown. For $11/16<p<7/8$, it would also be interesting to obtain a limit theorem for the fluctuations of $S_n$ around $L_p n^{(8p-4)/3}$.

Finally, natural extensions include models with $m\ge3$ memory channels and their multidimensional versions. In higher dimensions, one may ask whether the recurrence--transience thresholds found in \cite{Qin2025} are connected
to the corresponding multi-channel models through an analogous effective-parameter relation.

\appendix
\crefalias{section}{appendix}
\crefname{appendix}{Appendix}{Appendices}
\Crefname{appendix}{Appendix}{Appendices}

\section{An auxiliary convergence lemma}\label{app:slow-variation}
This appendix proves the convergence criterion used in the main argument. The lemma is deterministic; in the main text it is applied pathwise, on the almost sure event where its hypotheses hold.

\begin{lemma}\label{lem:slow-variation}
	Let $(u_n)$ be a nonnegative sequence of real numbers such that
	\[
		|u_{n+1}-u_n|\le \frac{C}{n+1}
	\]
	for some constant $C\in(0,\infty)$. If
	\[
		\sum_{n=1}^{\infty}\frac{u_n}{n+1}<\infty,
	\]
	then $u_n\to0$.
\end{lemma}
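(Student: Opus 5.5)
We argue by contradiction, exploiting the fact that a slowly varying sequence cannot stay large on long blocks without making the weighted series diverge. Suppose $u_n\not\to0$. Then there is $\varepsilon>0$ and infinitely many indices $n_j$ (which we may take increasing, with $n_{j+1}\ge 2n_j$, say) such that $u_{n_j}\ge 2\varepsilon$. The Lipschitz-type bound will force $u_n$ to remain at least $\varepsilon$ on a block of indices of length proportional to $n_j$ after each $n_j$. Each such block will then contribute an amount bounded below by a fixed positive constant to $\sum u_n/(n+1)$. That contradicts the assumed convergence.

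For the block estimate, fix $\delta>0$ small, to be chosen depending on $\varepsilon$ and $C$. For $n_j\le n\le (1+\delta)n_j$, summing the increment bound gives
\[
|u_n-u_{n_j}|\le \sum_{m=n_j}^{n-1}\frac{C}{m+1}\le C\log\frac{n}{n_j}+\frac{C}{n_j}\le C\delta+\frac{C}{n_j}.
\]
We choose $\delta$ with $C\delta\le\varepsilon/2$ and restrict to $j$ large enough that $C/n_j\le\varepsilon/2$. Then $u_n\ge\varepsilon$ throughout the block $[n_j,(1+\delta)n_j]$.

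For the lower bound on each block, we use that the block contains at least $\lfloor\delta n_j\rfloor$ indices, each at most $(1+\delta)n_j$. Hence, for large $j$,
\[
\sum_{n_j\le n\le(1+\delta)n_j}\frac{u_n}{n+1}\ \ge\ \varepsilon\,\frac{\lfloor\delta n_j\rfloor}{(1+\delta)n_j+1}\ \ge\ \frac{\varepsilon\delta}{2(1+\delta)}.
\]
Passing to a subsequence with $n_{j+1}>(1+\delta)n_j$ makes the blocks disjoint. Since $u\ge0$, the full series dominates the sum over infinitely many disjoint blocks, each contributing at least a fixed positive constant, so the series diverges. This contradiction gives $u_n\to0$.

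The only delicate point is making the block length proportional to $n_j$, so that the harmonic weights still give a fixed positive contribution. The increment bound $C/(n+1)$ is exactly what permits a block of length $\delta n_j$, because the total drift over such a block is of order $C\delta$. Nonnegativity of $u$ is needed so that discarding the terms outside the blocks only decreases the sum.
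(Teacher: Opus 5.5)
Your proof is correct and follows essentially the same route as the paper. Both arguments argue by contradiction and use the increment bound to keep $u$ bounded below on a multiplicative block, namely your $[n_j,(1+\delta)n_j]$ with $C\delta\le\varepsilon/2$ versus the paper's $[n,\Lambda n]$ with $\Lambda=e^{\varepsilon/(2C)}$; each block then contributes a fixed positive amount to $\sum_n u_n/(n+1)$, and summing over infinitely many disjoint blocks forces divergence.
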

\begin{proof}
	Suppose, for contradiction, that $u_n\not\to0$. Then there are $\varepsilon>0$ and infinitely many indices $n$ with $u_n\ge\varepsilon$. The increment bound prevents $u$ from dropping quickly after such an index.
	Set
	\[
		\Lambda:=e^{\varepsilon/(2C)}>1,
	\]
	and let $n$ be an index with $u_n\ge\varepsilon$. For any $j$ with $n\le j\le\Lambda n$ we have, telescoping the increments and using $|u_{i+1}-u_i|\le C/(i+1)$,
	\[
		|u_j-u_n| \le\sum_{i=n}^{j-1}\frac{C}{i+1} \le C\int_{n}^{j}\frac{\dd s}{s}
		=C\log\frac jn \le C\log\Lambda =\frac{\varepsilon}{2},
	\]
	so
	\begin{equation}\label{eq:slowvar-lower}
		u_j\ge u_n-\frac{\varepsilon}{2}\ge\frac{\varepsilon}{2}, \qquad n\le j\le\Lambda n.
	\end{equation}
	Thus each index at which $u$ reaches $\varepsilon$ forces $u$ to stay above $\varepsilon/2$ on a whole multiplicative interval $[n,\Lambda n]$. Each such interval contributes a fixed amount to the series. Indeed, by
	\eqref{eq:slowvar-lower},
	\[
		\sum_{n\le j\le\Lambda n}\frac{u_j}{j+1} \ge\frac{\varepsilon}{2}\sum_{n\le j\le\Lambda n}\frac1{j+1} \ge\frac{\varepsilon}{2}\int_{n+1}^{\Lambda n}\frac{\dd s}{s} =\frac{\varepsilon}{2}\left(\log\Lambda-\log\frac{n+1}{n}\right) \longrightarrow\frac{\varepsilon}{2}\log\Lambda =\frac{\varepsilon^2}{4C}
	\]
	as $n\to\infty$; in particular the left-hand side is at least $\varepsilon^2/(8C)$ once $n$ is large enough.
	Finally, since there are infinitely many indices with $u_n\ge\varepsilon$, we may extract from them a sequence $n_1<n_2<\cdots$ with $n_{i+1}>\Lambda n_i$ for every $i$, so that the corresponding intervals $[n_i,\Lambda n_i]$ are pairwise disjoint. Summing the contributions of these disjoint intervals gives
	\[
		\sum_{j\ge1}\frac{u_j}{j+1}
		\ge\sum_{i\ge i_0}\frac{\varepsilon^2}{8C}
		=\infty,
	\]
	contradicting the hypothesis $\sum_j u_j/(j+1)<\infty$. Hence $u_n\to0$.
\end{proof}

\section{Polynomial decay of $z_n$}
\label{app:critical-transverse}

This appendix proves the estimate on $z_n$ stated in \cref{lem:transverse-decay}. We use the notation introduced at the beginning of \cref{sec:critical-scale}.

\begin{proof}[Proof of \cref{lem:transverse-decay}]
	We first explain the choice of $\lambda$. For an arbitrary constant $c$, define
	\[
		z_n^{(c)}:=y_n-cx_n^2.
	\]
	Using \eqref{eq:critical-x-recursion} and \eqref{eq:critical-y-recursion}, a direct calculation gives
	\begin{align}
		z_{n+1}^{(c)}-z_n^{(c)}
		={}&
		\frac1{n+1}
		\left[
			\left(\frac{\mu^2}{2}-c\right)x_n^2
			-\left(1+\frac32y_n\right)z_n^{(c)}
			+\eta_{n+1}-2cx_n\xi_{n+1}
		\right]
		\notag\\
		&-
		\frac{c}{(n+1)^2}
		\left(-\frac34x_ny_n+\xi_{n+1}\right)^2 .
		\label{eq:z-c-recursion}
	\end{align}
	The choice
	\[
		c=\frac{\mu^2}{2}
	\]
	is therefore the unique one that removes the term of order $x_n^2$ which is independent of $z_n^{(c)}$. At $p=7/8$, this gives
	\[
		\lambda:=\frac{\mu^2}{2}=\frac{9}{32}, \qquad z_n:=y_n-\lambda x_n^2.
	\]
	With this choice, \eqref{eq:z-c-recursion} becomes
	\begin{equation}\label{eq:z-recursion}
		z_{n+1} = z_n+\frac1{n+1} \left[ -\left(1+\frac{27}{64}x_n^2+\frac32z_n\right)z_n 	+\zeta_{n+1} \right] +R_{n+1},
	\end{equation}
	where we used $y_n=\lambda x_n^2+z_n$ and set
	\[
		\zeta_{n+1} := 	\eta_{n+1}-\frac{9}{16}x_n\xi_{n+1},
	\]
	and
	\[
		R_{n+1} := 	-\frac{\lambda}{(n+1)^2} \left(- \frac34x_ny_n+\xi_{n+1}\right)^2.
	\]
	Since $x_n$ is $\F_n$-measurable, $(\zeta_{n+1})$ is a bounded martingale-difference sequence. Moreover, all the variables in the last square are bounded, so there is a deterministic constant $C_0$ such that
	\begin{equation}\label{eq:R-bound}
		|R_{n+1}| \le \frac{C_0}{(n+1)^2}.
	\end{equation}

	Fix $\theta\in(0,1)$ and let $\varepsilon_0:=1/6$. For each integer $M\ge2$, define
	\[
		\varsigma_M := \inf\left\{n\ge M: |x_n|>\varepsilon_0 \ \text{or}\ |z_n|>\varepsilon_0 \right\}.
	\]
	On $\{n<\varsigma_M\}$, set
	\[
		\chi_n := 1+\frac{27}{64}x_n^2+\frac32z_n.
	\]
	Then
	\[
		\frac34\le\chi_n\le2, \qquad 2\chi_n-1\ge\frac12.
	\]
	Thus, on $\{n<\varsigma_M\}$, \eqref{eq:z-recursion} can be written as
	\[
		z_{n+1} = \left(1-\frac{\chi_n}{n+1}\right)z_n +\frac{\zeta_{n+1}}{n+1} +R_{n+1}.
	\]

	Squaring and taking conditional expectations, and then using \eqref{eq:R-bound} together with the boundedness of $\zeta_{n+1}$, gives
	\begin{equation}\label{eq:z-square-pre}
		\E(z_{n+1}^2\mid\F_n) \le \left( 1-\frac{2\chi_n}{n+1} +\frac{C_1}{(n+1)^2}
		\right)z_n^2 +\frac{C_1}{(n+1)^2}
	\end{equation}
	on $\{n<\varsigma_M\}$, for some deterministic constant $C_1$. Here the cross term involving $\zeta_{n+1}$ has conditional expectation zero, while the terms involving $R_{n+1}$ are controlled by
	\eqref{eq:R-bound}.

	For all sufficiently large $n$,
	\[
		\frac{C_1}{n+1} \le\frac12 \le2\chi_n-1.
	\]
	It follows from \eqref{eq:z-square-pre} that
	\begin{equation}\label{eq:z-square-contraction}
		\E(z_{n+1}^2\mid\F_n) \le \left(1-\frac1{n+1}\right)z_n^2 +\frac{C_2}{(n+1)^2}
	\end{equation}
	on $\{n<\varsigma_M\}$, for all sufficiently large $n$.

	Define
	\[
		u_n := (n+1)^\theta z_n^2 \1_{\{n<\varsigma_M\}}.
	\]
	Since
	\[
		\{n+1<\varsigma_M\}\subseteq\{n<\varsigma_M\},
	\]
	\eqref{eq:z-square-contraction} implies
	\[
		\E(u_{n+1}\mid\F_n) \le (n+2)^\theta \left[ \left(1-\frac1{n+1}\right)z_n^2
			+\frac{C_2}{(n+1)^2} \right] \1_{\{n<\varsigma_M\}}.
	\]
	Because $0<\theta<1$, concavity gives
	\[
		(n+2)^\theta \le (n+1)^\theta \left(1+\frac{\theta}{n+1}\right).
	\]
	Furthermore,
	\[
		\left(1+\frac{\theta}{n+1}\right) \left(1-\frac1{n+1}\right) \le 1-\frac{1-\theta}{n+1}.
	\]
	Consequently,
	\[
		\E(u_{n+1}\mid\F_n) \le u_n-\frac{1-\theta}{n+1}u_n +\frac{C_3}{(n+1)^{2-\theta}}.
	\]
	Since $2-\theta>1$, the last term is summable. The Robbins--Siegmund theorem~\cite{RobbinsSiegmund1971} therefore implies that $u_n$ converges almost surely and
	\[
		\sum_n\frac{u_n}{n+1}<\infty \quad\as.
	\]
	The limit of $u_n$ must be zero. Indeed, if it were strictly positive, then $u_n$ would eventually be bounded below by a positive constant, contradicting the convergence of the last series. Hence, on
	$\{\varsigma_M=\infty\}$,
	\[
		n^\theta z_n^2\longrightarrow0 \quad\as.
	\]

	It remains to remove the stopping time. By \cref{thm:lln},
	\[
		x_n\longrightarrow0, \qquad y_n\longrightarrow0 \quad\as,
	\]
	and therefore $z_n=y_n-\lambda x_n^2\to0$ almost surely. Thus, for almost every sample path, there exists $M_0$ such that
	\[
		|x_n|\le\varepsilon_0, \qquad |z_n|\le\varepsilon_0 \qquad\text{for all }n\ge M_0.
	\]
	For every $M\ge M_0$, we then have $\varsigma_M=\infty$. Hence
	\[
		\Pp\left( \bigcup_{M\ge2}\{\varsigma_M=\infty\} \right)=1,
	\]
	and the preceding estimate yields
	\[
		n^\theta z_n^2\longrightarrow0 \quad\as.
	\]
	This proves \eqref{eq:transverse-rate}.

	Finally,
	\[
		|z_n|=o(n^{-\theta/2}) \quad\as.
	\]
	Since $n^{-\theta/2}=o(1/\log n)$ for every fixed $\theta\in(0,1)$, we conclude that
	\[
		z_n=o\!\left(\frac1{\log n}\right) 	\quad\as.
	\]
\end{proof}


\begin{thebibliography}{99}
	\bibitem{BaurBertoin2016}
	E.~Baur and J.~Bertoin.
	\newblock Elephant random walks and their connection to P\'olya-type urns.
	\newblock \emph{Phys. Rev. E}, 94:052134, 2016.
	\bibitem{Bercu2018}
	B.~Bercu.
	\newblock A martingale approach for the elephant random walk.
	\newblock \emph{J. Phys. A}, 51(1):015201, 2018.
	\bibitem{Bercu2022stops}
	B.~Bercu.
	\newblock On the elephant random walk with stops playing hide and seek with the Mittag--Leffler distribution.
	\newblock \emph{J. Stat. Phys.}, 189(1):Paper No.~12, 2022.
	\bibitem{Bercu2025stops}
	B.~Bercu.
	\newblock On the multidimensional elephant random walk with stops.
	\newblock \emph{Stochastic Process. Appl.}, 189:104692, 2025.
	\bibitem{BercuLaulin2019}
	B.~Bercu and L.~Laulin.
	\newblock On the multi-dimensional elephant random walk.
	\newblock \emph{J. Stat. Phys.}, 175(6):1146--1163, 2019.
	\bibitem{Bertenghi2022}
	M.~Bertenghi.
	\newblock Functional limit theorems for the multi-dimensional elephant random walk.
	\newblock \emph{Stoch. Models}, 38(1):37--50, 2022.
	\bibitem{Bertoin2021}
	J.~Bertoin.
	\newblock Scaling exponents of step-reinforced random walks.
	\newblock \emph{Probab. Theory Related Fields}, 179(1):295--315, 2021.
	\bibitem{Bertoin2022}
	J.~Bertoin.
	\newblock Counting the zeros of an elephant random walk.
	\newblock \emph{Trans. Amer. Math. Soc.}, 375(8):5539--5560, 2022.
	\bibitem{Businger2018}
	S.~Businger.
	\newblock The shark random swim (L\'evy flight with memory).
	\newblock \emph{J. Stat. Phys.}, 172(3):701--717, 2018.
	\bibitem{ChenLaulin2023}
	J.~Chen and L.~Laulin.
	\newblock Analysis of the smoothly amnesia-reinforced multidimensional elephant random walk.
	\newblock \emph{J. Stat. Phys.}, 190(10):Paper No.~158, 2023.
	\bibitem{ColettiGavaSchutz2017a}
	C.~F. Coletti, R.~Gava, and G.~M. Sch\"utz.
	\newblock Central limit theorem and related results for the elephant random walk.
	\newblock \emph{J. Math. Phys.}, 58(5):053303, 2017.
	\bibitem{ColettiGavaSchutz2017b}
	C.~F. Coletti, R.~Gava, and G.~M. Sch\"utz.
	\newblock A strong invariance principle for the elephant random walk.
	\newblock \emph{J. Stat. Mech. Theory Exp.}, 2017(12):123207, 2017.
	\bibitem{ColettiPapageorgiou2021}
	C.~F. Coletti and I.~Papageorgiou.
	\newblock Asymptotic analysis of the elephant random walk.
	\newblock \emph{J. Stat. Mech. Theory Exp.}, 2021(1):013205, 2021.
	\bibitem{CressoniSilvaViswanathan2007}
	J.~C. Cressoni, M.~A.~A. da~Silva, and G.~M. Viswanathan.
	\newblock Amnestically induced persistence in random walks.
	\newblock \emph{Phys. Rev. Lett.}, 98:070603, 2007.
	\bibitem{CurienLaulin2024}
	N.~Curien and L.~Laulin.
	\newblock Recurrence of the plane elephant random walk.
	\newblock \emph{C. R. Math. Acad. Sci. Paris}, 362:1183--1188, 2024.
	\bibitem{DedeckerFanHuMerlevede2023}
	J.~Dedecker, X.~Fan, H.~Hu, and F.~Merlev\`ede.
	\newblock Rates of convergence in the central limit theorem for the elephant random walk with random step sizes.
	\newblock \emph{J. Stat. Phys.}, 190(10):Paper No.~154, 2023.
	\bibitem{FanHuMa2021}
	X.~Fan, H.~Hu, and X.~Ma.
	\newblock Cram\'er moderate deviations for the elephant random walk.
	\newblock \emph{J. Stat. Mech. Theory Exp.}, 2021(2):023402, 2021.
	\bibitem{GuerinLaulinRaschel}
	H.~Gu\'erin, L.~Laulin, and K.~Raschel.
	\newblock A fixed-point equation approach for the superdiffusive elephant random walk.
	\newblock \emph{Ann. Inst. Henri Poincar\'e Probab. Stat.}, 62(2):973--1005, 2026.
	\bibitem{GuerinLaulinRaschelSimon}
	H.~Gu\'erin, L.~Laulin, K.~Raschel, and T.~Simon.
	\newblock On the limit law of the superdiffusive elephant random walk.
	\newblock \emph{Electron. J. Probab.}, 30, 2025.

    
	\bibitem{GutStadtmuller2021}
	A.~Gut and U.~Stadtm\"uller.
	\newblock Variations of the elephant random walk.
	\newblock \emph{J. Appl. Probab.}, 58(3):805--829, 2021.
	\bibitem{GutStadtmuller2022}
	A.~Gut and U.~Stadtm\"uller.
	\newblock The elephant random walk with gradually increasing memory.
	\newblock \emph{Statist. Probab. Lett.}, 189:109598, 2022.
	\bibitem{HallHeyde1980}
	P.~Hall and C.~C. Heyde.
	\newblock \emph{Martingale Limit Theory and Its Application}.
	\newblock Academic Press, New York, 1980.
	\bibitem{Janson2004}
	S.~Janson.
	\newblock Functional limit theorems for multitype branching processes and generalized P\'olya urns.
	\newblock \emph{Stochastic Process. Appl.}, 110(2):177--245, 2004.
	\bibitem{KubotaTakei2019}
	N.~Kubota and M.~Takei.
	\newblock Gaussian fluctuation for superdiffusive elephant random walks.
	\newblock \emph{J. Stat. Phys.}, 177(6):1157--1171, 2019.
	\bibitem{KumarHarbolaLindenberg2010}
	N.~Kumar, U.~Harbola, and K.~Lindenberg.
	\newblock Memory-induced anomalous dynamics: Emergence of diffusion, subdiffusion, and superdiffusion from a single random walk model.
	\newblock \emph{Phys. Rev. E}, 82:021101, 2010.
	\bibitem{Kursten2016}
	R.~K\"ursten.
	\newblock Random recursive trees and the elephant random walk.
	\newblock \emph{Phys. Rev. E}, 93:032111, 2016.
	\bibitem{Laulin2022}
	L.~Laulin.
	\newblock Introducing smooth amnesia to the memory of the elephant random walk.
	\newblock \emph{Electron. Commun. Probab.}, 27:1-12, 2022.
	\bibitem{MaulikRoySadhukhan2025}
	K.~Maulik, P.~Roy, and T.~Sadhukhan.
	\newblock Phase transitions for elephant random walks with two memory channels.
	\newblock arXiv:2509.10225, 2025.
	\bibitem{MiyazakiTakei2020}
	T.~Miyazaki and M.~Takei.
	\newblock Limit theorems for the `laziest' minimal random walk model of elephant type.
	\newblock \emph{J. Stat. Phys.}, 181(2):587--602, 2020.
	\bibitem{ParaanEsguerra2006}
	F.~N.~C. Paraan and J.~P. Esguerra.
	\newblock Exact moments in a continuous time random walk with complete memory of its history.
	\newblock \emph{Phys. Rev. E}, 74:032101, 2006.
	\bibitem{Qin2025}
	S.~Qin.
	\newblock Recurrence and transience of multidimensional elephant random walks.
	\newblock \emph{Ann. Probab.}, 53(3):1049--1078, 2025.
	\bibitem{RobbinsSiegmund1971}
	H.~Robbins and D.~Siegmund.
	\newblock A convergence theorem for nonnegative almost supermartingales and some applications.
	\newblock In \emph{Optimizing Methods in Statistics}, pages 233--257. Academic Press, 1971.
	\bibitem{Saha2022}
	S.~Saha.
	\newblock Random walk with multiple memory channels.
	\newblock \emph{Phys. Rev. E}, 106:L062105, 2022.
	\bibitem{SchutzTrimper2004}
	G.~M. Sch\"utz and S.~Trimper.
	\newblock Elephants can always remember: exact long-range memory effects in a non-Markovian random walk.
	\newblock \emph{Phys. Rev. E}, 70:045101, 2004.
\end{thebibliography}
\end{document}